\DeclareFontFamily{OT1}{pzc}{}
\DeclareFontShape{OT1}{pzc}{m}{it}{<-> s * [1.10] pzcmi7t}{}
\DeclareMathAlphabet{\mathpzc}{OT1}{pzc}{m}{it}
\theoremstyle{plain}
\newtheorem{thm}{Theorem}[section]
\newtheorem{lem}[thm]{Lemma}
\newtheorem{prop}[thm]{Proposition}
\newtheorem{cor}[thm]{Corollary}
\newtheorem{conj}[thm]{Conjecture}
\theoremstyle{definition}
\newtheorem{defn}[thm]{Definition}
\newtheorem{observation}[thm]{Observation}
\theoremstyle{remark}
\newtheorem{rem}[thm]{Remark}
\numberwithin{equation}{section}
\title[Representation Dimensions of Algebraic Tori]{Maximal Representation Dimensions of Algebraic Tori of Fixed Dimension Over Arbitrary Fields}
\author[B.~Heath]{Bailey Heath}
\address[B.~Heath]{Department of Mathematics, Yale University}
\begin{document}

\maketitle


\begin{abstract}
We define the representation dimension of an algebraic torus $T$ to be the minimal positive integer $r$ such that there exists a faithful embedding $T \hookrightarrow \operatorname{GL}_r$.  
Given a positive integer $n$, there exists a maximal representation dimension of all $n$-dimensional algebraic tori over all fields.  In this paper, we use the theory of group actions on lattices to find lower bounds on this maximum for all $n$.  Further, we find the exact maximum value for irreducible tori for all $n \in \left\lbrace 1, 2, \dots, 10, 11, 13, 17, 19, 23\right\rbrace$ and conjecturally infinitely many primes $n$.
\end{abstract}

\section{Introduction}
\label{sec:intro}

Let $k$ be a field with separable closure $k_s$.  A natural invariant of a finite group $G$ is the minimal dimension of a faithful representation of $G$ over $k$, known as the \textit{representation dimension} $\operatorname{rdim}_k\left(G\right)$ of $G$ over $k$.  In \cite[Thm. 4.1]{Karpenko}, Karpenko and Merkurjev showed that $\operatorname{rdim}_k\left(G\right)$ is equal to the essential dimension $\operatorname{ed}_k\left(G\right)$ when $G$ is a finite $p$-group over a field $k$ containing a $p$-th root of unity.  Since then, much work has been done on the representation dimensions of finite $p$-groups, such as \cite{Meyer}, \cite{Cernele}, and \cite{Bardestani}.  For finite groups more broadly, representation dimension is an upper bound on essential dimension, although the two do not coincide in general.
\par
It is natural to study bounds on the complexity of groups via bounds on their representation dimensions.  In \cite{Cernele}, Cernele, Kamgarpour, and Reichstein proved upper bounds in terms of $p$ and $n$ on the representation dimensions over the complex numbers $\mathbb{C}$ of groups of order $p^n$ for prime $p$.  Similarly, Moret\'{o} showed that the representation dimension over $\mathbb{C}$ of any finite group $G$, except for certain $2$-groups, is at most $\sqrt{\left|G\right|}$ \cite{Moreto}.
\par
While the works in the previous paragraph focused on complex representations of groups, in the current work we investigate representations of algebraic tori over arbitrary fields.  An \textit{(algebraic) torus} over $k$ is an algebraic group over $k$ that, when viewed as an algebraic group over $k_s$, is isomorphic to a finite product of copies of the multiplicative group.  We say that an algebraic torus over $k$ is \textit{irreducible} if it does not properly contain a subtorus over $k$.  Every algebraic torus $T$ admits a matrix representation via an embedding $T \hookrightarrow \operatorname{GL}_n$ for some positive integer $n$, and the \textit{representation dimension} $\operatorname{rdim}\left(T\right)$ is the minimal $n$ for which there exists a faithful embedding $T \hookrightarrow \operatorname{GL}_n$.  A result analogous to that of \cite[Thm. 4.1]{Karpenko} for essential $p$-dimensions of tori over arbitrary fields was proved in \cite{Lotscher}, with minimal degrees of $p$-faithful representations of tori appearing in the result.  Moreover, Merkurjev proved an explicit formula for the representation dimension of an algebraic torus over a field whose splitting group is a $p$-group \cite[Thm. 4.3]{Merkurjev}.
\par
A consequence of the Jordan-Zassenhaus Theorem below is the following:  For any positive integer $n$, there exists a maximal representation dimension of all $n$-dimensional algebraic tori over all fields.  In this paper, we study the following quantities, where each maximum is taken over all fields:
\begin{align*} \label{rdim n def}
\operatorname{rdim}\left(n\right) & := \max\left\lbrace \operatorname{rdim}\left(T\right) \ | \ T \text{ a torus, }\dim\left(T\right) = n\right\rbrace \\
\operatorname{rdim}_{\operatorname{irr}}\left(n\right) & := \max\left\lbrace \operatorname{rdim}\left(T\right) \ | \ T \text{ an irreducible torus, }\dim\left(T\right) = n\right\rbrace.
\end{align*}
\par
We now state the results of the present work.

\begin{thm} \label{lower bound for rdim theorem 1}
In Table \ref{lower bound on rdim n table 1}, we have lower bounds on $\operatorname{rdim}\left(n\right)$ for all positive integers $n$.

\begin{table}[h]
\centering
\caption[Lower bounds on $\operatorname{rdim}\left(n\right)$ for all $n \in \mathbb{Z}^+$.]{Lower bounds on $\operatorname{rdim}\left(n\right)$ for all positive integers $n$.}
\label{lower bound on rdim n table 1}
\begin{tabular}{|c|c|} \hline $n$ & $\operatorname{rdim}\left(n\right) \geq $ \\\hline $1$ & $2$ \\\hline $2$ & $6$ \\\hline $3$ & $12$ \\\hline $4$ & $24$ \\\hline $5$ & $40$ \\\hline $6$ & $72$ \\\hline $\geq 7$ & $2^n$ \\\hline
\end{tabular}
\end{table}
\end{thm}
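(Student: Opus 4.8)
The plan is to translate the problem into the combinatorics of lattices with finite group action, and then, for each row of Table~\ref{lower bound on rdim n table 1}, to build an explicit torus and bound its representation dimension from below. For a torus $T$ over $k$ with character lattice $M=\widehat T$, on which the absolute Galois group acts through a finite quotient $G$, an $r$-dimensional linear representation of $T$ over $k$ amounts to a surjection of $G$-lattices $P\twoheadrightarrow M$ with $P$ a permutation $G$-lattice of rank $r$, the weights of the representation being the images of a permutation basis of $P$; such a representation is faithful exactly when the weights generate $M$, i.e.\ when the map is onto. Writing $\prdim(M)$ for the least rank of a permutation $G$-lattice surjecting onto $M$, this gives $\rdim(T)=\prdim(M)$; and since every finite group occurs as a Galois group over some field, $\rdim(n)=\max\prdim(M)$ over all faithful rank-$n$ $G$-lattices $M$. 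So each row reduces to producing a finite group $G$ and a faithful rank-$n$ $G$-lattice $M$ with $\prdim(M)$ at least the claimed value.

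To bound $\prdim(M)$ from below I would use two tools. The first is a \emph{rational} obstruction: if $P=\bigoplus_i\mathbb Z[G/H_i]\twoheadrightarrow M$, then $\bigoplus_i\mathbb Q[G/H_i]\twoheadrightarrow M_{\mathbb Q}$, so every $\mathbb Q$-irreducible constituent $V$ of $M_{\mathbb Q}$ occurs in some $\Ind_{H_i}^{G}\mathbf 1$; by Frobenius reciprocity $V^{H_i}\neq 0$, whence $[G:H_i]\geq o(V):=\min\{[G:H]:V^{H}\neq 0\}$, the size of the smallest $G$-orbit of vectors supporting $V$. This yields $\prdim(M)\geq\max_V o(V)$, with mild strengthenings coming from multiplicities and from having to cover several constituents at once. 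The second, and essential, tool is an \emph{integral} refinement: for the extremal candidates the rational bound saturates near $2n$ (one can cover each one-dimensional constituent by a rank-$2$ permutation lattice), so to go further one reduces $M$ modulo a suitable prime $p$ and exploits the structure of permutation $\mathbb F_p[G]$-modules---notably that $\mathbb F_p[G/H]$ is indecomposable with one-dimensional head when $G$ is a $p$-group---or else argues directly with the $\mathbb Z[G]$-module structure of $M$, tracking the possible submodule images of each indecomposable permutation lattice and showing that their sum cannot exhaust $M$.

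For the constructions: for $n=1$, let $G=\mathbb Z/2$ act on $M=\mathbb Z$ by $-1$, so only $H=1$ fixes a nonzero vector and $\prdim(M)\geq 2$; for $n=2$, let $G=\mathbb Z/6$ act on $M=\mathbb Z[\zeta_6]$, where no proper subgroup fixes a nonzero vector, giving $\prdim(M)\geq 6$. For $n=3,4,5,6$ I would take suitably entangled faithful rank-$n$ lattices---non-split quotients of $\mathbb Z[G]$ for groups $G$ of order $12,24,40,72$ or small multiples thereof---so that, although the rational obstruction is small, the integral analysis above forces $\prdim(M)\geq 12,24,40,72$ respectively. For $n\geq 7$ I would use a uniform family: a faithful rank-$n$ lattice $M$ whose splitting group $G$ is a $2$-group, so that either Merkurjev's formula for tori with $p$-group splitting group \cite[Thm.~4.3]{Merkurjev} applies, or a direct lattice argument shows that the minimal permutation cover of $M$ is forced to be (essentially) the regular representation $\mathbb Z[G]$, of rank $2^{n}$.

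The main obstacle is the integral lower bound. Rational representation theory alone cannot detect that a lattice requires an exponentially large permutation cover: for every abelian-by-small extremal candidate one covers each one-dimensional constituent cheaply, capping the rational bound near $2n$, far below the targets $24,40,72$ and $2^{n}$. So the heart of the argument is the integral one---via reduction modulo $p$ together with indecomposability of $\mathbb F_p[G/H]$, via Merkurjev's $p$-group formula, or via an explicit submodule-image chase---showing that these cheap covers cannot be assembled over $\mathbb Z$ and that nothing substantially smaller than the regular representation suffices. The remaining work, namely checking faithfulness of each chosen lattice and verifying in each of the finitely many small cases that the rational and integral obstructions actually attain the stated bounds, is routine but must be carried out carefully.
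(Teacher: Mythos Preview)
Your reformulation in terms of permutation covers is correct, and the arguments for $n=1,2$ are fine. But for $n\geq 3$ the proposal is not a proof: you do not specify the lattices, and you do not carry out the ``integral refinement'' that you yourself flag as the main obstacle. Phrases like ``suitably entangled faithful rank-$n$ lattices---non-split quotients of $\mathbb Z[G]$ for groups $G$ of order $12,24,40,72$ or small multiples thereof'' and ``the integral analysis above forces $\prdim(M)\geq\cdots$'' are placeholders, not arguments. For $n\geq 7$ the same applies: you name neither the $2$-group nor the lattice, and you do not check that Merkurjev's formula (or any direct count) actually yields $2^n$. As written, the proposal establishes the bounds only for $n\leq 2$.

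The paper takes a different and far more concrete route that sidesteps the obstruction machinery entirely. For each $n$ it exhibits a Weyl-group lattice---the root lattice of $\mathsf{A}_1,\mathsf{G}_2,\mathsf{A}_3,\mathsf{C}_4,\mathsf{D}_5,\mathsf{E}_6$ for $n=1,\ldots,6$ and the weight lattice of $\mathsf{B}_n$ for $n\geq 7$---and computes $\symrank(L,W)$ \emph{exactly}, not merely from below (Theorem~\ref{root systems theorem}, Corollary~\ref{lower bound on rdim n}). The point is that for a Weyl group $W$ one knows the full sublattice structure between $\Lambda_r$ and $\Lambda$ (read off from the Smith normal form of the Cartan matrix) and the size of every orbit (via the parabolic description of stabilisers, Lemma~\ref{stabilizers in Weyl groups}). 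Any $W$-stable generating set is a union of orbits, and at least one of those orbits must contain a vector lying outside every proper $W$-sublattice, so one simply locates the smallest such orbit. For $\mathsf{B}_n$, for instance, every vector of $\Lambda\setminus\Lambda_r$ has all coordinates half-odd-integers, hence nonzero, so its $W(\mathsf{B}_n)$-orbit has size at least $2^n$, with equality at the minuscule weight $\lambda_n$; this gives $\symrank(\Lambda,W)=2^n$ on the nose. No mod-$p$ reduction and no appeal to Merkurjev are needed. If you want to repair your outline, the shortest fix is to replace your unspecified constructions by these Weyl-group lattices and your unspecified integral argument by this orbit-size computation.
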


Note that Theorem \ref{lower bound for rdim theorem 1} only asserts the existence of \textit{some} $n$-dimensional torus over \textit{some} field with the claimed representation dimension, so the bounds are not guaranteed to hold over a given particular field.  However, we show that these bounds hold over the following important class of fields.

\begin{cor} \label{number field corollary 1}
The bounds on $\operatorname{rdim}\left(n\right)$ in Theorem \ref{lower bound for rdim theorem 1} hold over any number field.
\end{cor}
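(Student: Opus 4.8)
The proof rests on the standard combinatorial description of the representation dimension of a torus: if $T$ is an $n$-dimensional torus over a field $k$ with character lattice $M = \widehat{T}$, and if $G$ denotes the image of the absolute Galois group $\operatorname{Gal}(k_s/k)$ in $\operatorname{Aut}(M) \cong \operatorname{GL}_n(\bbZ)$, then $\operatorname{rdim}(T)$ is the least cardinality of a $G$-stable generating subset (equivalently, generating multiset) of $M$. In particular $\operatorname{rdim}(T)$ depends only on the $\operatorname{GL}_n(\bbZ)$-conjugacy class of the pair $(G, M)$, not on $k$. Thus the bounds of Theorem \ref{lower bound for rdim theorem 1} are witnessed, for each $n$, by a finite group $G_n$ together with a faithful $\bbZ G_n$-lattice $M_n$ of rank $n$, namely the groups and lattices used in the constructions proving that theorem. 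To establish the corollary over a number field $k$, it therefore suffices to produce a $k$-torus whose character lattice is $M_n$ with its prescribed $G_n$-action, and for that it is enough to realize $G_n$ as the Galois group of a finite extension $L/k$: the $k$-torus obtained from $\bbG_m^n$ by twisting along $\operatorname{Gal}(k_s/k) \twoheadrightarrow \operatorname{Gal}(L/k) \cong G_n \hookrightarrow \operatorname{GL}_n(\bbZ)$ has character lattice $M_n$, splits over $L$, and has image of Galois equal to all of $G_n$ (the representation $G_n \to \operatorname{GL}_n(\bbZ)$ being faithful). Hence its representation dimension over $k$ equals the value in Table \ref{lower bound on rdim n table 1}.

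The plan is thus to check that each of the finitely many groups $G_n$ occurring in the proof of Theorem \ref{lower bound for rdim theorem 1} — which can be read off directly from the constructions there — is realizable as a Galois group over an arbitrary number field $k$. For $n \leq 6$ the relevant $G_n$ are small solvable subgroups of $\operatorname{GL}_n(\bbZ)$ (cyclic, dihedral, and their mild extensions), and for $n \geq 7$ the group realizing the bound $2^n$ is again solvable — indeed it may be taken to be a $2$-group (an elementary abelian one, in the simplest form of the construction). Realizability of all such $G_n$ over any number field is then supplied by Shafarevich's theorem, which asserts that every finite solvable group occurs as the Galois group of a finite extension of any algebraic number field; should any symmetric group or a wreath product of a cyclic group with a symmetric group intervene, one instead appeals to Hilbert's irreducibility theorem over $k$ applied to a generic (possibly iterated) polynomial, which also yields realizability over every number field. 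When several of these groups must be realized simultaneously — e.g. if some $G_n$ is a direct product coming from a product of tori — one arranges linear disjointness of the corresponding extensions, which the same inverse-Galois techniques permit. In all cases we obtain, over the given number field $k$, an $n$-dimensional torus $T$ with $\operatorname{rdim}(T)$ equal to the stated bound, proving the corollary.

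The substantive inputs are therefore the combinatorial formula for $\operatorname{rdim}(T)$ (already available from the earlier discussion) and classical inverse-Galois theory (Shafarevich's theorem; Hilbert's irreducibility theorem). The only genuine work is the bookkeeping of the middle paragraph: pinning down exactly which finite groups the proof of Theorem \ref{lower bound for rdim theorem 1} employs and verifying that each is of a type known to be realizable over all number fields. I expect this to be the main point requiring care, but it is organizational rather than mathematical, since the groups involved are abelian or $2$-groups for $n \geq 7$ and small solvable groups (or standard Weyl-type groups built from symmetric and cyclic factors) for $n \leq 6$, all of which fall under the cited realizability results.
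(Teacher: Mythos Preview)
Your overall strategy matches the paper's: reduce to realizing the specific finite groups $G_n$ underlying Theorem~\ref{lower bound for rdim theorem 1} as Galois groups over an arbitrary number field, then twist $\mathbb{G}_m^n$ accordingly to produce the required torus. The gap is in your identification of the $G_n$ and in the inverse Galois results you invoke.

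The groups actually used (Table~\ref{lower bound on rdim n table}) are Weyl groups: $W(\mathsf{A}_1)$, $W(\mathsf{G}_2)$, $W(\mathsf{A}_3)$, $W(\mathsf{C}_4)$, $W(\mathsf{D}_5)$, $W(\mathsf{E}_6)$, and $W(\mathsf{B}_n)$ for $n \geq 7$. Contrary to your description, these are not all solvable or $2$-groups. For $n \geq 7$ the group $W(\mathsf{B}_n) \cong C_2 \wr S_n$ is neither a $2$-group nor solvable (once $n \geq 5$), though your wreath-product fallback via Hilbert irreducibility does happen to cover it. The real problem is $n = 5, 6$: $W(\mathsf{D}_5)$ has $A_5$ as a composition factor and is not a wreath product of the type you name, and $W(\mathsf{E}_6)$ has the simple group $\operatorname{U}_4(2) \cong \operatorname{PSp}_4(3)$ as a composition factor, so neither Shafarevich's theorem nor the generic-polynomial argument for $S_n$ or $C_m \wr S_n$ applies. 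Your assertion that the required bookkeeping is ``organizational rather than mathematical'' is therefore too optimistic: realizing $W(\mathsf{E}_6)$ over every number field is a genuine theorem, not a formality.

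The paper handles all cases at once by citing a result of Nuzhin: for every irreducible root system $\Phi \neq \mathsf{F}_4$, the Weyl group $W(\Phi)$ is the Galois group of a \emph{regular} extension of $\mathbb{Q}(t)$. Since every number field is Hilbertian, this regular realization descends to a realization of $W(\Phi)$ over any number field, covering in particular $W(\mathsf{D}_5)$ and $W(\mathsf{E}_6)$.
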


We show that Table \ref{lower bound on rdim n table 1} gives exact values of $\operatorname{rdim}_{\operatorname{irr}}\left(n\right)$ in many cases. 

\begin{thm} \label{symrankirr theorem 1}
The bounds in Table \ref{lower bound on rdim n table 1} give exact values of $\operatorname{rdim}_{\operatorname{irr}}\left(n\right)$ for the following values of $n$:
\begin{enumerate}
\item $n \in \left\lbrace 1, 2, \dots, 10, 11, 13, 17, 19, 23\right\rbrace$.
\item primes $n$ such that the multiplicative order of $2$ modulo $n$ is equal to $n - 1$ or $\frac{n-1}{2}$.
\end{enumerate}
\end{thm}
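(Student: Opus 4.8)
The plan is to reduce everything to a combinatorial question about finite subgroups of $\GL_n(\bbZ)$ and their character lattices, then handle the two cases by exploiting the structure of the maximal finite subgroups in the relevant dimensions. Recall that an $n$-dimensional torus $T$ over $k$ corresponds to an integral representation of $\Gamma = \Gal(k_s/k)$ on the cocharacter lattice $N$, factoring through a finite quotient $G \leq \GL_n(\bbZ)$; faithful embeddings $T \hookrightarrow \GL_r$ correspond to $G$-equivariant surjections from permutation lattices $\bbZ[G/H_1] \oplus \cdots \oplus \bbZ[G/H_m]$ onto the character lattice $M = N^\vee$, with $r = \sum [G:H_i]$, subject to the faithfulness/irreducibility constraints. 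Thus $\rdim(T)$ is computed as a ``permutation rank'' type invariant of the $G$-lattice $M$, and $T$ is irreducible exactly when $M$ (equivalently $N$) contains no nonzero $G$-fixed sublattice, i.e.\ $M^G$-type conditions phrased dually — more precisely, $T$ is irreducible iff $N^G = 0$, iff $M_G$ (coinvariants) is finite. First I would make this dictionary precise, citing the Jordan--Zassenhaus finiteness already invoked in the introduction, so that $\rdim_{\mathrm{irr}}(n)$ becomes $\max_G \max_M \rdim(M)$ over finite $G \leq \GL_n(\bbZ)$ and faithful $G$-lattices $M$ of rank $n$ with no fixed vectors.

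Next I would establish the general upper bound matching Table~\ref{lower bound on rdim n table 1}. The key point is that for an irreducible torus the Galois group $G$ acts on $N \otimes \bbQ$ with no trivial subrepresentation, and one can bound $\rdim$ in terms of $|G|$ and the orbit structure; combined with the Minkowski-style bounds on the order of finite subgroups of $\GL_n(\bbZ)$ for small $n$, this should pin down the maximum. Concretely, for $n$ in the finite list $\{1,\dots,11,13,17,19,23\}$ I expect to use the explicit classification of maximal finite subgroups of $\GL_n(\bbZ)$ (available through dimension $\le 11$ and for these primes via the theory of maximal finite groups / the work behind the CARAT tables), check for each such $G$ which faithful fixed-point-free lattices it admits, and compute $\rdim$ for each, verifying the maximum equals the tabulated value. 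The lower bound half is already given by Theorem~\ref{lower bound for rdim theorem 1}, so only the upper bound needs the case analysis; the entries $6,12,24,40,72$ for $n \le 6$ and $2^n$ for $n=7,\dots$ should emerge from the Weyl-group-type lattices ($A_n$, $D_n$, etc.) and the sign-permutation group $(\bbZ/2)^n \rtimes S_n$ respectively.

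For part (b), the relevant $G$ is the group $(\bbZ/2)^n \rtimes S_n$ of signed permutations — or rather, when $n$ is prime with $2$ of multiplicative order $n-1$ or $(n-1)/2$ mod $n$, the governing maximal finite subgroup of $\GL_n(\bbZ)$ is related to the normalizer of a Singer-type cyclic subgroup, and the extremal lattice is the one realizing $\rdim = 2^n$. The order condition on $2$ mod $n$ is exactly what makes a certain $\bbF_2$-representation (coming from an $n$-dimensional lattice mod $2$) irreducible or a sum of two irreducibles, which controls the sublattice structure and forces any faithful fixed-point-free quotient to have permutation rank at least $2^n$; conversely $2^n$ is achieved by $\bbZ[(\bbZ/2)^n \rtimes S_n / S_n]$ or a similar induced lattice. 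So I would: (1) identify the extremal group and lattice; (2) use the number-theoretic hypothesis to show the mod-$2$ reduction has the needed irreducibility, ruling out smaller permutation resolutions; (3) produce the matching embedding into $\GL_{2^n}$ explicitly. The main obstacle will be step (2) of part (b) together with the corresponding upper-bound verification in part (a): showing that no finite subgroup of $\GL_n(\bbZ)$ other than the expected extremal one admits a faithful fixed-point-free lattice of larger $\rdim$. For the finite list this is a bounded computation leaning on existing classifications, but organizing it cleanly — and proving the prime-$n$ family uniformly rather than case-by-case — requires carefully controlling how $\rdim$ behaves under the relevant lattice operations (direct sums, finite-index sublattices, induction), which is where I would expect to spend most of the effort.
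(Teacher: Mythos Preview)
There is a genuine gap in your plan for part (b). You focus entirely on the signed permutation group $(\bbZ/2)^n \rtimes S_n$ and the $\bbF_2$-representation theory controlling its sublattices, but you never explain why this is the only family of irreducible maximal finite subgroups of $\GL_p(\bbZ)$ that must be examined. In fact it is not: the paper's argument rests on a dichotomy due to Plesken (Proposition~\ref{Plesken prop}) which says that for an irreducible finite subgroup $G < \GL_p(\bbZ)$ in odd prime dimension, either $G^+$ is conjugate over $\bbQ$ to a group of monomial matrices, \emph{or} $O_2(G^+)=1$ and $G^+$ is almost simple with a nonabelian simple minimal normal subgroup. Your proposal addresses only the monomial branch. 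The almost simple branch is handled in the paper by a completely separate argument (Section~\ref{almost simple groups chapter}, Theorem~\ref{almost simple groups theorem}) that invokes the classification of finite simple groups together with known lower bounds on their minimal faithful (projective) representation degrees to show that $|G|$ is small enough relative to $p$ that the crude bound $\symrank(L,G) \leq |G|\cdot p$ from Lemma~\ref{symmetric rank facts}(a) already yields $\leq 2^p$. Without this second half, the upper bound $\rdim_{\mathrm{irr}}(p) \leq 2^p$ is not established, and nothing in the hypothesis on the order of $2$ modulo $p$ rules out almost simple groups.

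There is also a setup error. You assert that $T$ is irreducible iff $N^G = 0$, but irreducibility of the torus means irreducibility of $X_T \otimes_{\bbZ} \bbQ$ as a $\bbQ[G]$-module (Definition~\ref{irreducible lattice def}), which is strictly stronger than the vanishing of fixed vectors: a direct sum of two nontrivial irreducible $G$-lattices has $N^G = 0$ yet is reducible. This matters because the reduction to irreducible maximal finite subgroups (Proposition~\ref{reduction of symrankirr}) uses the correct notion, and in the monomial analysis the paper repeatedly exploits genuine $\bbQ[G]$-irreducibility, not merely the absence of invariants. Your part (a) outline is broadly aligned with the paper's approach via the GAP database of i.m.f.\ subgroups (Theorem~\ref{low dimensions theorem}), but the concrete mechanism the paper uses to bound $\symrank(\bbZ^n,G)$ there --- counting vectors of prescribed norm under the invariant form (Lemma~\ref{diagonal norms form a generating set}) --- is absent from your sketch and would need to be supplied.
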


Item (b) in Theorem \ref{symrankirr theorem 1} addresses the specific cases $a = 1, 2$ of the following more general asymptotic bound; in particular, it says that we can take $N_1 = N_2 = 1$.

\begin{thm} \label{asymptotic theorem}
For all positive integers $a$, there exists a positive integer $N_a$ such that, for all primes $p \geq N_a$ such that the multiplicative order of $2$ modulo $p$ is equal to $\frac{p-1}{a}$, we have $\operatorname{rdim}_{\operatorname{irr}}\left(p\right) = 2^p$.
\end{thm}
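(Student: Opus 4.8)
The plan is to prove the two inequalities $\operatorname{rdim}_{\operatorname{irr}}\left(p\right) \ge 2^p$ and $\operatorname{rdim}_{\operatorname{irr}}\left(p\right) \le 2^p$ separately; only the second will use ``$p$ large'' and the hypothesis $\operatorname{ord}_p(2) = \tfrac{p-1}{a}$. Throughout we use that for a torus $T$ with minimal splitting field $L/k$ and $G = \operatorname{Gal}(L/k)$ one has $\operatorname{rdim}\left(T\right) = \min\left\{\operatorname{rank}_{\mathbb Z} P\right\}$, the minimum over permutation $\mathbb Z[G]$-lattices $P$ admitting a $G$-equivariant surjection $P \twoheadrightarrow \widehat{T}$ onto the character lattice. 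For ``$\ge$'', the lower-bound constructions behind Theorem \ref{lower bound for rdim theorem 1} are irreducible tori, so $\operatorname{rdim}_{\operatorname{irr}}\left(p\right) \ge 2^p$ already holds for every prime $p \ge 7$ (over a number field, by Corollary \ref{number field corollary 1}), with no restriction on $p$. Thus the whole content is to show $\operatorname{rdim}\left(T\right) \le 2^p$ for \emph{every} irreducible $p$-dimensional torus $T$ over \emph{every} field, once $p$ is large and $\operatorname{ord}_p(2) = \tfrac{p-1}{a}$.

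So let $T$ be such a torus, with splitting group $G$ and character lattice $M = \widehat{T}$ of rank $p$. First I would observe that $M \otimes \mathbb Q$ is in fact absolutely irreducible and $\mathbb Q$-rational: $\operatorname{End}_G(M \otimes \mathbb Q)$ is a $\mathbb Q$-division algebra of $\mathbb Q$-dimension dividing the prime $p$, and dimension $p$ would force $G$ cyclic, which cannot act faithfully and irreducibly in rank $p$. Next I would invoke the classification of finite groups admitting a faithful $\mathbb Q$-rational absolutely irreducible character of prime degree $p$: after replacing $G$ by its image in $\operatorname{GL}_p(\mathbb Q)$, it lies in one of three families — (i) an ``affine'' family with $\left|G\right| = p^{O(1)}$ (this includes the relevant $\operatorname{PGL}_2$, $\operatorname{PSL}_2$ and extraspecial-type groups); (ii) an almost-simple family whose socle is either $A_{p+1}$ (the standard representation) or a group of Lie type carrying a cross-characteristic representation of prime degree $p$, together with finitely many sporadic exceptions; or (iii) a ``$2$-monomial'' family, in which $M \otimes \mathbb Q$ is induced from a quadratic character of an index-$p$ subgroup, so that $G$ embeds in $\mathbb Z/2 \wr Q$ for some transitive $Q \le S_p$.

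Families (i) and (ii) I would dispatch by crude bounds once $p$ is large. In case (i), $\operatorname{rdim}\left(T\right) \le \left|G\right| = p^{O(1)} \ll 2^p$. In the Lie-type subcase of (ii) one checks $\left|G\right| = 2^{O((\log p)^2)} \ll 2^p$, so again $\operatorname{rdim}\left(T\right) \le \left|G\right| \ll 2^p$; and in the $A_{p+1}$/$S_{p+1}$ subcase, where $\left|G\right|$ is too large to use directly, $M$ is commensurable with a $\mathbb Z$-form of the standard representation and is covered by the permutation module on the $G$-orbit of a ``root'' $e_i - e_j$ (of size $O(p^2)$), after a short argument arranging the cokernel to be torsion-free. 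The finitely many sporadic exceptions, together with the requirement that these polynomial and quasi-polynomial bounds actually fall below $2^p$, contribute a first lower cutoff for $N_a$.

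The hard part will be family (iii). Here $G \le \mathbb Z/2 \wr Q$ with $Q \le S_p$ transitive, $M \otimes \mathbb Q$ is the restriction of the reflection representation $\mathbb Q^p$ of $\mathbb Z/2 \wr S_p$, and $M$ is some $G$-stable full lattice in $\mathbb Q^p$; writing $A = G \cap (\mathbb Z/2)^p$, irreducibility forces the diagonal characters cut out by $A$ to span $\mathbb F_2^p$. The target is a permutation $\mathbb Z[G]$-module of rank at most $2^p$ surjecting onto $M$ — for instance $\mathbb Z[G/H]$ for a ``diagonal'' subgroup $H$ (of index dividing $2^p$) — which amounts to showing that $M$ is generated over $\mathbb Z[G]$ by an $H$-fixed vector, equivalently that the cocharacter lattice embeds into $\mathbb Z[G/H]$ with torsion-free cokernel. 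I expect this to reduce to a local statement at $2$ about $\mathbb Z_2[A]$-lattices, and it is precisely here that the hypothesis enters: $\operatorname{ord}_p(2) = \tfrac{p-1}{a}$ fixes the number ($= a$) of irreducible factors of $\tfrac{x^p-1}{x-1}$ modulo $2$, hence the $2$-adic structure of the circulant-type lattices $M$ that can occur, and — with $a$ bounded and $p$ large — rules out the pathological $\mathbb Z_2[A]$-lattices that would otherwise only be coverable by permutation modules of rank exceeding $2^p$. Carrying out this local analysis and choosing the correct generator is the main obstacle; the rest is assembly. Taking $N_a$ to be the maximum of the cutoffs arising in the three families then yields $\operatorname{rdim}_{\operatorname{irr}}\left(p\right) = 2^p$ for all primes $p \ge N_a$ with $\operatorname{ord}_p(2) = \tfrac{p-1}{a}$.
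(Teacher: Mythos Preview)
Your overall architecture matches the paper's: lower bound from the $\mathsf{B}_p$ weight lattice (Corollary~\ref{lower bound on rdim n}), upper bound by classifying irreducible finite $G < \operatorname{GL}_p(\mathbb{Z})$, with the almost-simple families handled by crude $|G|$-bounds (the paper's Theorem~\ref{almost simple groups theorem}) and the $A_{p+1}/S_{p+1}$ case read off from the $\mathsf{A}_p$ root system. Your absolute-irreducibility step is the paper's Proposition~\ref{irreducibility over Z and C}.

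The gap is family (iii). The trichotomy you invoke is nonstandard; the paper uses Plesken's dichotomy (Proposition~\ref{Plesken prop}): either $O_2(G^+) = 1$ and $G^+$ is almost simple, or $O_2(G^+) \ne 1$ and $G^+$ is $\operatorname{GL}_p(\mathbb{Q})$-conjugate to signed permutation matrices. In the latter case your assertion that irreducibility forces the diagonal part $A = G \cap (\mathbb{Z}/2)^p$ to ``span $\mathbb{F}_2^p$'' is not right (or at least not what is needed): Plesken gives $|O_2(G^+)| = 2^{d\ell}$ with $d = (p-1)/a$ and any $1 \le \ell \le a$, so $A$ may be a proper $\pi(G)$-stable subspace of $\mathbb{F}_2^p$. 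The paper's argument (Theorem~\ref{p-1/a theorem}) therefore splits on $\ell$: for $\ell < a$ the crude bound $\operatorname{symrank} \le |G|\cdot p \le 2^{(a-1)(p-1)/a + 1}\,|\pi(G)|\,p$ suffices; for $\ell = a$ one has $\operatorname{D}_p(\mathbb{Z}) \le G$, and then every primitive sublattice is generated by at most $a$ binary vectors each of support $\le \lfloor 2p/3 \rfloor$ (Corollary~\ref{generating with p/3 zeros}), giving orbit sizes $\le 2^{\lfloor 2p/3 \rfloor}|\pi(G)|$. Either way one must bound $|\pi(G)|$, and this is where Burnside's classification of transitive subgroups of $S_p$ (Theorem~\ref{transitive subgroups of S_p}) is essential: the metacyclic and $\operatorname{PSL}$-type cases give $|\pi(G)|$ polynomial or quasipolynomial in $p$, while the dangerous cases $\pi(G) \in \{A_p, S_p\}$ are handled separately (Proposition~\ref{pi(G) has only 4 subspaces}) by showing that then only the four sublattices of Table~\ref{G-sublattices table} can occur. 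Your ``local statement at $2$ about $\mathbb{Z}_2[A]$-lattices'' points at the right place --- the hypothesis $\operatorname{ord}_p(2) = (p-1)/a$ does fix the number of irreducible $\mathbb{F}_2[C_p]$-summands of $\mathbb{F}_2^p$ --- but the mechanism that actually closes the argument is this support-length combinatorics together with Burnside, neither of which appears in your plan.
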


Although a bound on $N_a$ in Theorem \ref{asymptotic theorem} can be extracted from the proof of Theorem \ref{p-1/a theorem}, this is not enough to prove that Theorem \ref{asymptotic theorem} implies that $\operatorname{rdim}_{\operatorname{irr}}\left(p\right) = 2^p$ for infinitely many primes $p$.  For example, this would follow from the existence of some positive integer $a$ such that there exist infinitely primes $p$ for which the order of $2$ modulo $p$ is equal to $\frac{p-1}{a}$, but no such $a$ is known.  In particular, Artin famously conjectured that there are infinitely many primes $p$ such that $2$ is a primitive root modulo $p$ (i.e., the $a = 1$ case) \cite[\S 1]{HeathBrown}, yet this remains an open problem.  Nonetheless, based on the theorems above and computational evidence, we pose the following conjecture.

\begin{conj} \label{rdim conjecture}
The bounds in Table \ref{lower bound on rdim n table 1} give exact values of $\operatorname{rdim}\left(n\right)$ for all positive integers $n$.
\end{conj}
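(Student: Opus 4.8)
The plan is to pass to the lattice-theoretic reformulation underlying Theorem~\ref{lower bound for rdim theorem 1}. Let $T$ be a torus of dimension $n$ with character lattice $M = \widehat{T}$ and splitting group $G \subseteq \GL(M)$; after choosing a basis, $M = \bbZ^n$ and $G$ is a finite subgroup of $\GL_n(\bbZ)$. The theory of group actions on lattices identifies $\operatorname{rdim}(T)$ with the invariant $\symrank_G(M)$, the least value of $\sum_i [G:\Stab_G(v_i)]$ over finite families of vectors $v_i \in M$ whose $G$-orbits together generate $M$ as an abelian group. Hence $\operatorname{rdim}(n) = \max_G \symrank_G(\bbZ^n)$, the maximum over conjugacy classes of finite subgroups $G \leq \GL_n(\bbZ)$, a finite maximum by the Jordan--Zassenhaus theorem. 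Since Theorem~\ref{lower bound for rdim theorem 1} already gives $\operatorname{rdim}(n) \geq c_n$ --- writing $c_n$ for the entry of Table~\ref{lower bound on rdim n table 1} --- the conjecture reduces to the upper bound $\symrank_G(\bbZ^n) \leq c_n$ for every such $G$.

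The first reduction is monotonicity: if $G \leq G'$ then each $G'$-orbit splits into $G$-orbits of the same total size and the same span, so $\symrank_G(\bbZ^n) \leq \symrank_{G'}(\bbZ^n)$; as every finite subgroup of $\GL_n(\bbZ)$ lies in a maximal one, it suffices to bound $\symrank_G(\bbZ^n)$ over the finitely many maximal finite subgroups. For $n$ in the range where these are classified, I would run through the list --- separating those that act reducibly over $\bbQ$, which correspond to reducible (possibly non-split) tori and require their own treatment, since an extension of tori need not split --- and for each $G$ compute $\symrank_G(\bbZ^n)$ by a finite optimization over orbit-generating families, checking that no value exceeds $c_n$. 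This is how one would extend the list of confirmed cases beyond Theorem~\ref{symrankirr theorem 1}.

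The heart of the matter is the uniform bound $\symrank_G(\bbZ^n) \leq 2^n$ for all $n \geq 7$. The extremal configuration is transparent --- for instance the Weyl group $W(B_n)$ acting on the rank-$n$ lattice $\left\{a \in \bbZ^n : a_i \equiv a_j \ (\mathrm{mod}\ 2)\ \text{for all } i,j\right\}$, whose cheapest generating orbit is the set of $2^n$ sign vectors $(\pm 1,\dots,\pm 1)$, with no smaller family generating it. To rule out a larger competitor, one would bound from below the stabilizer of any vector whose orbit generates a full-rank sublattice of a $\bbQ$-irreducible rank-$n$ lattice, and control the index of that sublattice. This is precisely where arithmetic intervenes: for prime $n$, lattices of cyclotomic type are governed by the multiplicative order of $2$ modulo $n$, and existing techniques bound their $\symrank$ by $2^n$ only when that order is large --- the condition appearing in Theorems~\ref{symrankirr theorem 1} and~\ref{asymptotic theorem}.

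I expect this last point to be the decisive obstacle. The large-$n$ bound does not reduce to a finite check: the maximal finite subgroups of $\GL_n(\bbZ)$, and their invariant lattices, are not classified for large $n$, and even granted a classification one would still need a uniform argument rather than case analysis. Moreover, as the authors observe, determining $\operatorname{rdim}_{\operatorname{irr}}(p)$ exactly for a prime $p$ is entangled with Artin-type questions on the multiplicative order of $2$ modulo $p$, so a complete proof would seem to require either progress on those questions or a new argument bounding $\symrank$ independently of that order. The realistic outcome is therefore a proof of the conjecture for a larger but still explicit finite set of $n$, rather than in full generality.
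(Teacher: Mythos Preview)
The statement you are addressing is Conjecture~\ref{rdim conjecture}, which the paper explicitly poses as an open problem; there is no proof in the paper to compare against. Your proposal is accordingly not a proof but a discussion of strategy, and you yourself conclude that ``the realistic outcome is therefore a proof of the conjecture for a larger but still explicit finite set of $n$, rather than in full generality.''

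That assessment is accurate and well aligned with the paper's own perspective. Your reduction to bounding $\symrank(\bbZ^n,G)$ over maximal finite subgroups is exactly Corollary~\ref{rdimn = max symrank of Z^n}, and your identification of the $W(\mathsf{B}_n)$ weight lattice as the conjectural extremal configuration matches Table~\ref{lower bound on rdim n table}. You are also right that the reducible case is a genuine additional obstacle: the paper's exact results (Theorems~\ref{low dimensions theorem} and~\ref{symrankirr theorem 1}) concern $\rdim_{\operatorname{irr}}(n)$, not $\rdim(n)$, precisely because only the i.m.f.\ subgroups are used; cf.\ Remark~\ref{Q-class remark}. Your observation that the multiplicative order of $2$ modulo $p$ is the decisive arithmetic obstruction in prime dimensions is exactly the content of Theorems~\ref{p-1/a theorem} and~\ref{p-1/2 theorem} and the discussion surrounding Theorem~\ref{asymptotic theorem}.

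In short, there is no gap to name because you have not claimed a proof; your diagnosis of why the conjecture remains open is correct and matches the paper's.
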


\textbf{Acknowledgments.  }It is my pleasure to thank Dr. Alexander Duncan for suggesting this project and for his contributions and support throughout.  I am also thankful to Dr. Zinovy Reichstein for helpful comments.  This work was partially supported by a SPARC Graduate Research Grant from the Office of the Vice President for Research at the University of South Carolina.

\section{Preliminaries}

The main tools that we use to study representation dimensions of algebraic tori are $G$-lattices and their symmetric ranks.  This largely reduces to the study of actions of finite integral matrix groups on certain subgroups of the additive group $\mathbb{Z}^n$.
\par
Let us begin with the definition of a $G$-lattice.  We write $\mathbb{Z}^+$ for the set of positive integers.

\begin{defn} \label{G-lattice def}
A \textbf{lattice} $L$ is a free $\mathbb{Z}$-module of finite rank $n \in \mathbb{Z}^+$.  In this case, we say that the \textbf{rank} of $L$ is $\operatorname{rank}\left(L\right) = n$.  If the group $G$ acts on $L$ (or, equivalently, if $L$ is a module over the group ring $\mathbb{Z}\left[G\right]$), then we further say that $L$ is a \textbf{$G$-lattice}.  
\end{defn}

Note that every $G$-lattice $L$ can be written (up to isomorphism) with $L = \mathbb{Z}^n$ and $G$ a subgroup of $\operatorname{GL}_n\left(\mathbb{Z}\right)$, where $n = \operatorname{rank}\left(L\right)$.  We will often think about $G$-lattices through this linear algebraic lens moving forward.  
\par
We often focus on \textit{irreducible} $G$-lattices in this paper.

\begin{defn} \label{irreducible lattice def}
The $G$-lattice $L$ is \textbf{irreducible} if the $\mathbb{Q}\left[G\right]$ module 
$$L_{\mathbb{Q}} := L \otimes_{\mathbb{Z}} \mathbb{Q}$$ 
is irreducible, i.e., if $L_{\mathbb{Q}}$ has no nontrivial proper $G$-invariant submodules.  
\end{defn}

\begin{defn} \label{irreducible matrix group def}
The subgroup $G \leq \operatorname{GL}_n\left(\mathbb{Z}\right)$ is said to be \textbf{irreducible} if the $G$-lattice $\mathbb{Z}^n$ is irreducible.
\end{defn}

The $G$-lattices $L$ and $L'$ are said to be \textit{$\mathbb{Z}$-equivalent} if $L \cong L'$ as $\mathbb{Z}\left[G\right]$-modules, and $L$ and $L'$ are said to be \textit{$\mathbb{Q}$-equivalent} if $L_{\mathbb{Q}} \cong L'_{\mathbb{Q}}$ as $\mathbb{Q}\left[G\right]$-modules.  This distinction between isomorphism of $G$-lattices over $\mathbb{Z}$ and over $\mathbb{Q}$ is subtle, but important.  We record the following useful observations about $\mathbb{Q}$-equivalence of $G$-lattices.

\begin{prop} \cite[\S 2]{PleskenPohst1} \cite[\S 1.2.1]{Lorenz} \label{Q-equivalence observations}
Let $G$ be a group and let $L$ be a $G$-lattice of rank $n$.
\begin{enumerate}
\item If $L' \subseteq L$ is a $G$-sublattice of rank $n$ (or, equivalently, of finite index in $L$), then $L$ and $L'$ are $\mathbb{Q}$-equivalent $G$-lattices.
\item Conversely, if $L_{\mathbb{Q}} \cong L'_{\mathbb{Q}}$, then replacing $L'$ with an isomorphic copy inside of $L_{\mathbb{Q}}$, we may assume that $L' \subseteq L$ and that the quotient module $L/L'$ is finite.
\end{enumerate}
\end{prop}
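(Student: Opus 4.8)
The plan is to prove the two parts separately, in both cases exploiting that $\mathbb{Q}$ is flat over $\mathbb{Z}$, so that tensoring with $\mathbb{Q}$ is exact and annihilates torsion, while $G$-equivariance comes for free because every map in sight is $\mathbb{Z}$-linear and compatible with the $G$-action by construction.

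For part (a), I would first record the elementary fact (via Smith normal form, or the structure theorem for finitely generated abelian groups) that a subgroup $L' \leq L \cong \mathbb{Z}^n$ has finite index if and only if $\operatorname{rank}(L') = n$; in that case $L/L'$ is a finitely generated torsion abelian group, hence finite. Then I would apply $-\otimes_{\mathbb{Z}} \mathbb{Q}$ to the short exact sequence $0 \to L' \to L \to L/L' \to 0$ of $\mathbb{Z}[G]$-modules: flatness of $\mathbb{Q}$ preserves exactness, and $(L/L') \otimes_{\mathbb{Z}} \mathbb{Q} = 0$ since $L/L'$ is finite, so the inclusion induces a $\mathbb{Q}[G]$-isomorphism $L'_{\mathbb{Q}} \xrightarrow{\sim} L_{\mathbb{Q}}$; that is, $L$ and $L'$ are $\mathbb{Q}$-equivalent.

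For part (b), I would start from a $\mathbb{Q}[G]$-isomorphism $\varphi \colon L'_{\mathbb{Q}} \xrightarrow{\sim} L_{\mathbb{Q}}$. Since $L'$ is torsion-free, the map $x \mapsto x \otimes 1$ embeds $L'$ as a full-rank $G$-sublattice of $L'_{\mathbb{Q}}$, so $\varphi(L') \subseteq L_{\mathbb{Q}}$ is a full-rank $G$-sublattice isomorphic to $L'$ as a $G$-lattice. It need not lie inside $L$, but $\varphi(L')$ is finitely generated, so choosing a positive integer $m$ that clears the denominators of a generating set yields $m \cdot \varphi(L') \subseteq L$; multiplication by $m$ is a $\mathbb{Z}[G]$-isomorphism $\varphi(L') \xrightarrow{\sim} m\varphi(L')$. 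Replacing $L'$ by $m\varphi(L')$, we obtain $L' \subseteq L$ with $\operatorname{rank}(L') = n$, whence $L/L'$ is finite by the observation established in part (a).

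The argument is essentially routine; the only point requiring a little care is the denominator-clearing step in (b), together with the verification that all maps involved — in particular the natural map $L' \to L'_{\mathbb{Q}}$ and multiplication by $m$ — respect the $G$-action, which holds because scalar multiplication commutes with the (by definition $\mathbb{Z}$-linear) $G$-action. I do not expect any serious obstacle here.
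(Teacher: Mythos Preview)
Your proof is correct and complete. The paper does not actually supply its own proof of this proposition; it is stated with citations to \cite{PleskenPohst1} and \cite{Lorenz} and left without further argument, so there is nothing to compare against beyond noting that your flatness-and-denominator-clearing approach is the standard one and matches what those references contain.
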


%

The invariant of $G$-lattices that we use to study representation dimensions of algebraic tori is the \textit{symmetric rank}.  This notion was introduced by MacDonald in \cite{MacDonald}, in which the author studied a related version of symmetric rank known as the \textit{symmetric $p$-rank} in order to study essential $p$-dimensions of normalizers of maximal tori.

\begin{defn} \label{symmetric rank def}
The \textbf{symmetric rank} of a $G$-lattice $L$, denoted $\operatorname{symrank}\left(L, G\right)$, is the minimal size of a $G$-stable generating set of $L$.  
\end{defn}

\begin{lem} \label{symmetric rank facts}
Let $n \in \mathbb{Z}^+$, let $G \leq \operatorname{GL}_n\left(\mathbb{Z}\right)$, and let $L \subseteq \mathbb{Z}^n$ be a $G$-lattice.
\begin{enumerate}
\item We have the bound $\operatorname{symrank}\left(L, G\right) \leq \left|G\right|\operatorname{rank}\left(L\right)$.
\item If $H \leq \operatorname{GL}_n\left(\mathbb{Z}\right)$ is conjugate over $\operatorname{GL}_n\left(\mathbb{Z}\right)$ to $G$, then there exists an $H$-lattice $M \subseteq \mathbb{Z}^n$ such that $M$ as an $H$-lattice is isomorphic to $L$ as a $G$-lattice.  In particular, $\operatorname{symrank}\left(M, H\right) = \operatorname{symrank}\left(L, G\right)$.
\item If $K \leq G$, then $L$ is a $K$-lattice with $\operatorname{symrank}\left(L, K\right) \leq \operatorname{symrank}\left(L, G\right)$.
\end{enumerate}
\end{lem}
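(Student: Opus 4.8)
The three statements are formal consequences of Definition~\ref{symmetric rank def}, so the plan is short. For part (a), I would fix a $\mathbb{Z}$-basis $v_1, \dots, v_m$ of $L$ with $m = \operatorname{rank}(L)$ and consider the set $S = \left\lbrace g v_i \ : \ g \in G, \ 1 \leq i \leq m \right\rbrace$. By construction $S$ is stable under the $G$-action and has cardinality at most $\left|G\right| \cdot m$; since $S$ contains the basis $\left\lbrace v_1, \dots, v_m \right\rbrace$ (take $g = \id$), it generates $L$ as a $\mathbb{Z}$-module. Hence $S$ is a $G$-stable generating set and $\operatorname{symrank}(L, G) \leq \left|S\right| \leq \left|G\right|\operatorname{rank}(L)$.

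For part (b), write $H = \varphi G \varphi^{-1}$ for some $\varphi \in \operatorname{GL}_n(\mathbb{Z})$, so that $\varphi$ is in particular an automorphism of the abelian group $\mathbb{Z}^n$, and set $M := \varphi(L)$. The first thing to check is that $M$ is an $H$-sublattice of $\mathbb{Z}^n$: for $h = \varphi g \varphi^{-1} \in H$ and $\ell \in L$ one computes $h \cdot \varphi(\ell) = \varphi(g\ell) \in \varphi(L) = M$. The same computation shows that $\varphi|_L \colon L \to M$ is a $\mathbb{Z}$-module isomorphism intertwining the $G$-action on $L$ with the $H$-action on $M$ along the conjugation isomorphism $G \xrightarrow{\sim} H$, $g \mapsto \varphi g \varphi^{-1}$; this is the precise meaning of the assertion that $M$ as an $H$-lattice is isomorphic to $L$ as a $G$-lattice. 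Consequently $\varphi$ sends $G$-stable generating sets of $L$ to $H$-stable generating sets of $M$ of the same cardinality, and $\varphi^{-1}$ does the reverse, so $\operatorname{symrank}(M, H) = \operatorname{symrank}(L, G)$.

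For part (c), since $L$ is stable under the $G$-action it is a fortiori stable under the action of the subgroup $K \leq G$, so $L$ is a $K$-lattice. Moreover any subset of $L$ stable under $G$ is stable under $K$; in particular a $G$-stable generating set of $L$ of minimal size $\operatorname{symrank}(L, G)$ is also a $K$-stable generating set, whence $\operatorname{symrank}(L, K) \leq \operatorname{symrank}(L, G)$.

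I do not expect a genuine obstacle here. The only point that needs a moment's care is the bookkeeping in part (b): $G$ and $H$ are literally distinct subgroups of $\operatorname{GL}_n(\mathbb{Z})$, so rather than regarding $L$ and $M$ as modules over a single group one must carry along the conjugation isomorphism $G \cong H$ and verify that $\varphi|_L$ is semilinear with respect to it. Everything else follows immediately by unwinding the definition of symmetric rank.
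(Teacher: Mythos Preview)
Your proposal is correct and follows essentially the same approach as the paper: for (a) both take the union of $G$-orbits of a $\mathbb{Z}$-basis, for (b) both transport $L$ along the conjugating matrix, and for (c) both observe that a $G$-stable generating set is automatically $K$-stable. Your phrasing of (c) is in fact slightly more direct than the paper's (which routes the observation through the decomposition of a $G$-orbit into $K$-coset translates), but the content is the same.
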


\begin{proof}
For (a), let $\left\lbrace \ell_1, \dots, \ell_{\operatorname{rank}\left(L\right)}\right\rbrace$ be a generating set for $L$ as a lattice.  Observe that the set $S = \bigcup_{i=1}^{\operatorname{rank}\left(L\right)} G\ell_i$ is a $G$-stable generating set for $L$, and $\left|G\ell_i\right| \leq \left|G\right|$ for each $1 \leq i \leq \operatorname{rank}\left(L\right)$.  Thus, $S$ is a $G$-stable generating set for $L$ of size at most $\left|G\right| \operatorname{rank}\left(L\right)$.
\par
For (b), suppose $G = AHA^{-1}$ for $A \in \operatorname{GL}_n\left(\mathbb{Z}\right)$.  The matrix $A$ is a change of basis matrix from the standard basis for $\mathbb{Z}^n$ to some other basis $\mathcal{B}$ for $\mathbb{Z}^n$, so take $M$ to be the lattice $L$ written in the basis $\mathcal{B}$.
\par
For (c), it is clear that $L$ is a $K$-lattice.  The symmetric rank statement follows from the fact that the $G$-orbit of $v \in L$ is equal to the union of the orbits of $v$ under the actions of the cosets of $K$ in $G$.
\end{proof}

Note that any two $\mathbb{Z}$-equivalent $G$-lattices have the same symmetric rank by (b) of Lemma \ref{symmetric rank facts}, but $\mathbb{Q}$-equivalent $G$-lattices need not have equal symmetric ranks.
\par
Given an algebraic torus $T$, the \textit{character group} $X_T$ of $T$ is a $\mathscr{G}$-lattice for the absolute Galois group $\mathscr{G} := \operatorname{Gal}\left(k_s/k\right)$ (see, e.g., \cite[\S 7.3]{Waterhouse}).  Although $\mathscr{G}$ may be infinite, the following observation tells us that we can think of $\mathscr{G}$ as acting on $X_T$ via a finite group.

\begin{prop} \cite[\S 7.3]{Waterhouse} \label{absolute galois group acts as finite group}
Let $T$ be an algebraic torus with character group $X_T$.  Then, the action of $\mathscr{G}$ on $X_T$ factors through the finite group $\operatorname{Gal}\left(k'/k\right)$ for some finite Galois extension $k \subseteq k'$.
\end{prop}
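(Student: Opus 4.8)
The plan is to reduce to the elementary observation that the character lattice is cut out by finitely much algebraic data. First I would choose a $\mathbb{Z}$-basis $\chi_1, \dots, \chi_n$ of $X_T$, where $n = \dim(T)$; each $\chi_i$ is a homomorphism of algebraic groups $T_{k_s} \to \mathbb{G}_{m, k_s}$. Writing $T = \Spec(A)$ over $k$ so that $T_{k_s} = \Spec(A \otimes_k k_s)$, such a homomorphism corresponds to the image $u_i$ of the coordinate unit $t \in k_s[t, t^{-1}]$, namely a grouplike unit $u_i \in (A \otimes_k k_s)^{\times}$. Since $A \otimes_k k_s = \varinjlim (A \otimes_k k')$, the colimit running over the finite subextensions $k'$ of $k_s/k$, both $u_i$ and $u_i^{-1}$ lie in $A \otimes_k k_i$ for some such finite $k_i$, and the grouplike identities, valid in $A \otimes_k k_s$, already hold in $A \otimes_k k_i$ because $A \otimes_k k_i \hookrightarrow A \otimes_k k_s$. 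Hence $\chi_i$ descends to a homomorphism $T_{k_i} \to \mathbb{G}_{m, k_i}$; that is, $\chi_i$ is fixed by $\operatorname{Gal}(k_s / k_i)$ under the Galois action on $X_T$.

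Next I would let $k'$ be the Galois closure over $k$ of the compositum $k_1 \cdots k_n$ inside $k_s$. Since $k_s / k$ is separable algebraic, each $k_i / k$ is finite separable, so $k' / k$ is a finite Galois extension, and by construction every $\chi_i$ --- and therefore every element of $X_T$ --- is fixed by $\operatorname{Gal}(k_s / k')$. Thus $\operatorname{Gal}(k_s / k')$ acts trivially on $X_T$, and since it is a normal subgroup of $\mathscr{G} = \operatorname{Gal}(k_s / k)$ with quotient $\operatorname{Gal}(k' / k)$, the action of $\mathscr{G}$ on $X_T$ factors through $\operatorname{Gal}(k' / k)$, which is the desired conclusion.

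An alternative, shorter route is to invoke the standard splitting theorem: every torus over $k$ becomes split over some finite Galois extension $k' / k$, so that $T_{k'} \cong \mathbb{G}_{m, k'}^n$ and all characters are automatically defined over $k'$, after which the factoring argument of the previous paragraph applies verbatim. Either way, the only point requiring care --- and hence the main obstacle --- is the precise meaning of ``$\chi_i$ is defined over $k_i$'': one must verify not only that the unit $u_i$ and its inverse descend, but that the Hopf-algebra (grouplike) relations are preserved, for which the injectivity of $A \otimes_k k_i \hookrightarrow A \otimes_k k_s$ is exactly what is needed. The remainder of the argument is purely formal Galois theory.
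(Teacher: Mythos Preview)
Your argument is correct. Note, however, that the paper does not supply its own proof of this proposition: it is stated with a citation to \cite[\S 7.3]{Waterhouse} and no proof environment follows, so there is nothing to compare against. Your direct argument via the colimit description of $A \otimes_k k_s$ and the injectivity of $A \otimes_k k_i \hookrightarrow A \otimes_k k_s$ is a clean way to see that each character is defined over a finite subextension; the alternative you sketch, appealing to the splitting theorem for tori, is the route Waterhouse takes and is what most readers would expect. Either is fine.
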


Recall that an action of $\mathscr{G}$ on a set $X$ is said to be \textit{continuous} if $X$ is a union of $\mathscr{G}$-orbits where each orbit factors through $\operatorname{Gal}\left(k'/k\right)$ for some finite Galois extension $k \subseteq k'$.  The following theorem now makes precise the close relationship between $G$-lattices and algebraic tori.

\begin{thm} \cite[\S 7.3]{Waterhouse} \label{algebraic tori correspondence}
The functor $F$, from the category of algebraic tori to the category of finitely generated free Abelian groups on which $\mathscr{G}$ acts continuously, defined by $F\left(T\right) = X_T$ is an antiequivalence of categories.
\end{thm}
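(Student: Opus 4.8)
The plan is to deduce Theorem~\ref{algebraic tori correspondence} from the analogous statement over a separably closed field together with Galois descent; all of the genuinely torus-theoretic content sits in the split case, and everything else is formal. \emph{Step 1 (the split case).} Over any field, in particular over $k_s$, every torus is split, so it suffices to compute with $\bbG_m^n = \Spec k_s[x_1^{\pm 1},\dots,x_n^{\pm 1}]$. The grouplike elements of this Hopf algebra are exactly the Laurent monomials $x_1^{a_1}\cdots x_n^{a_n}$ (a grouplike is a linear combination of the monomial basis, and comparing $\Delta(g)=g\otimes g$ forces it to be a single monomial), so $\Hom_{k_s}\!\left(\bbG_m^n,\bbG_m\right)$ is free abelian on $x_1,\dots,x_n$ and, more generally, $\Hom_{k_s}\!\left(\bbG_m^n,\bbG_m^m\right)\cong \operatorname{Mat}_{m\times n}(\bbZ)$. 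Hence $S\mapsto X^*(S):=\Hom_{k_s}(S,\bbG_m)$ is a contravariant functor from split tori over $k_s$ to finitely generated free abelian groups that is fully faithful (by the $\Hom$ computation) and essentially surjective (as $\bbZ^n=X^*(\bbG_m^n)$), i.e.\ an antiequivalence onto that category, with no Galois action yet in sight.

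\emph{Step 2 (full faithfulness over $k$).} Any torus $T$ over $k$ becomes split over $k_s$, and $X_T:=X^*(T_{k_s})$ acquires a $\mathscr{G}$-action from the canonical (semilinear) $\mathscr{G}$-action on $\Spec k_s$ by functoriality of base change; this action is continuous by Proposition~\ref{absolute galois group acts as finite group}. For tori $T,S$ over $k$, Galois descent of morphisms along $\Spec k_s\to\Spec k$ — carried out at finite level, since any morphism $T_{k_s}\to S_{k_s}$ is already defined over some finite Galois $k'/k$, and then $\Hom_{k'}(T_{k'},S_{k'})^{\operatorname{Gal}(k'/k)}=\Hom_k(T,S)$ — identifies $\Hom_k(T,S)$ with the subgroup of $\mathscr{G}$-fixed points in $\Hom_{k_s}(T_{k_s},S_{k_s})$. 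By Step~1, the latter group is $\Hom_{\bbZ}(X_S,X_T)$ as a $\mathscr{G}$-module, so passing to fixed points gives $\Hom_k(T,S)\cong\Hom_{\mathscr{G}}(X_S,X_T)$. Thus $F$ is fully faithful.

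\emph{Step 3 (essential surjectivity).} Let $M$ be a finitely generated free abelian group with a continuous $\mathscr{G}$-action; by continuity the action factors through $\Gamma:=\operatorname{Gal}(k'/k)$ for some finite Galois extension $k'/k$. Give the group algebra $k'[M]\cong k'\otimes_{\bbZ}\bbZ[M]$ the diagonal semilinear $\Gamma$-action (Galois on $k'$, the given action on $M$) and set $A:=\left(k'[M]\right)^{\Gamma}$. By faithfully flat descent $A$ is a $k$-Hopf algebra with $A\otimes_k k'\cong k'[M]$ as Hopf algebras, so $T:=\Spec A$ is a torus over $k$, split by $k'$; unwinding the identification shows $X_T\cong M$ as continuous $\mathscr{G}$-lattices. (One could instead present $M$ inside a short exact sequence of permutation lattices and build $T$ from Weil restrictions $R_{k''/k}\bbG_m$ using Step~2, but the descent construction is the most direct route.)

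\emph{Main obstacle.} The only real work is the descent machinery of Steps~2 and~3: that a morphism of tori over $k$ is precisely a $\mathscr{G}$-fixed morphism over $k_s$, and that a $\Gamma$-equivariant Hopf algebra over $k'$ of the shape $k'[M]$ descends to $k$. Once this — essentially faithfully flat descent for affine group schemes, with the passage to the profinite group $\mathscr{G}$ reduced to finite level via Proposition~\ref{absolute galois group acts as finite group} — is in hand, Step~1 and the routine matching of the two continuity conditions complete the proof.
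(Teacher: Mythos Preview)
The paper does not give its own proof of this theorem: it is stated with a citation to \cite[\S 7.3]{Waterhouse} and used as a black box. So there is no ``paper's proof'' to compare against.

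That said, your sketch is correct and is essentially the standard argument one finds in Waterhouse. Step~1 (identifying $\Hom_{k_s}(\bbG_m^n,\bbG_m^m)$ with integer matrices via grouplikes), Step~2 (Galois descent for morphisms to get full faithfulness), and Step~3 (descending the Hopf algebra $k'[M]$ along a finite Galois extension to get essential surjectivity) are exactly the ingredients Waterhouse uses. The reduction to a finite Galois layer via Proposition~\ref{absolute galois group acts as finite group} is handled correctly, and your parenthetical alternative in Step~3 (building the torus from Weil restrictions via a permutation-lattice presentation) is also a valid route. Nothing is missing; if anything, you have supplied more than the paper does.
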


In particular, in the correspondence of Theorem \ref{algebraic tori correspondence}, irreducible tori correspond to irreducible $\mathscr{G}$-lattices.  
\par
We now state the connection between representation dimensions of algebraic tori and symmetric ranks of $G$-lattices.  See \cite[\S 4.4]{dissertation} for a proof.

\begin{thm} \label{symmetric rank theorem}
Let $T$ be an algebraic torus over the field $k$, let $X_T$ be the character group of $T$, and let $\mathscr{G}$ be the absolute Galois group of $k$.  Then,
$$\operatorname{rdim}\left(T\right) = \operatorname{symrank}\left(X_T, \mathscr{G}\right).$$
\end{thm}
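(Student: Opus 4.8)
The plan is to establish the equality $\operatorname{rdim}(T) = \operatorname{symrank}(X_T, \mathscr{G})$ by translating both sides into the language of the antiequivalence in Theorem \ref{algebraic tori correspondence} and matching faithful embeddings $T \hookrightarrow \operatorname{GL}_r$ with $\mathscr{G}$-stable generating sets of $X_T$. The key point is that $\operatorname{GL}_r$ is itself (close to) a torus, and more precisely that representations of $T$ correspond under $F$ to certain maps out of $X_T$.

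First I would recall that a homomorphism of algebraic groups $T \to \operatorname{GL}_r$ is the same thing as an $r$-dimensional representation of $T$, and that since $T$ is diagonalizable, any such representation decomposes (over $k_s$, compatibly with the $\mathscr{G}$-action) into a direct sum of characters of $T$, i.e. elements of $X_T$. Concretely, giving a homomorphism $T \to \operatorname{GL}_r$ up to conjugation amounts to giving a $\mathscr{G}$-stable multiset (or, as we will see, effectively a set) $S \subseteq X_T$ of size $r$: the representation is $\bigoplus_{\chi \in S} \chi$, with $\mathscr{G}$ permuting the summands according to its action on $S$. Here one uses that $\operatorname{GL}_r$ contains the diagonal torus $\mathbb{G}_m^r$ together with the permutation matrices $S_r$ normalizing it, and that any homomorphism from the torus $T$ into $\operatorname{GL}_r$ lands, up to conjugation, in the normalizer of a maximal torus — in fact, since the image is a torus, we can conjugate it into $\mathbb{G}_m^r$ after possibly extending scalars, and descend the $\mathscr{G}$-action as a permutation action on the coordinates. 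This identification is where I would invoke the correspondence of Theorem \ref{algebraic tori correspondence} carefully.

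Next I would check the faithfulness condition on each side. The embedding $T \hookrightarrow \operatorname{GL}_r$ is faithful (i.e. a closed immersion, equivalently has trivial kernel as $T$ is smooth) precisely when the corresponding map of character groups $\mathbb{Z}^r \to X_T$ — sending the standard basis to the multiset $S$ — is surjective; this is the standard dictionary between subgroups/quotients of diagonalizable groups and sub/quotient lattices of character groups. Surjectivity of $\mathbb{Z}^r \twoheadrightarrow X_T$ says exactly that $S$ generates $X_T$ as a $\mathbb{Z}$-module. Combining with the previous paragraph: faithful embeddings $T \hookrightarrow \operatorname{GL}_r$ correspond to $\mathscr{G}$-stable subsets $S \subseteq X_T$ of size $r$ that generate $X_T$. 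Taking the minimum over $r$ on the embedding side matches the minimum size of a $\mathscr{G}$-stable generating set on the lattice side, which is by definition $\operatorname{symrank}(X_T, \mathscr{G})$. One should also note the $\mathscr{G}$-action factors through a finite quotient by Proposition \ref{absolute galois group acts as finite group}, so that a finite $S$ with finite orbits always exists and $\operatorname{symrank}$ is a well-defined finite number (bounded via Lemma \ref{symmetric rank facts}(a)).

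The main obstacle I expect is making the correspondence "representations of $T$ $\leftrightarrow$ $\mathscr{G}$-stable multisets in $X_T$" fully rigorous, in particular the reduction from multisets to sets and the conjugation step placing the image inside the normalizer of the diagonal torus with the Galois action realized as coordinate permutations. For the multiset-versus-set subtlety: if a character $\chi$ appears with multiplicity $m > 1$ in a $\mathscr{G}$-stable generating multiset, one can always delete the extra copies and still have a $\mathscr{G}$-stable generating set, so the minimum is achieved by an honest set — but one must verify this does not interact badly with the $\mathscr{G}$-action on the summands. The cleanest route is probably to argue directly: given a faithful $\rho: T \hookrightarrow \operatorname{GL}_r$, base change to $k_s$, diagonalize, read off the characters appearing (with multiplicity) as a $\mathscr{G}$-stable generating multiset of $X_T$, hence $\operatorname{symrank}(X_T,\mathscr{G}) \le r$; conversely, given a $\mathscr{G}$-stable generating set $S = \{\chi_1,\dots,\chi_r\}$, form the induced $k_s$-representation $\bigoplus \chi_i$, observe the $\mathscr{G}$-action permutes the $\chi_i$ so this representation is defined over $k$ (by Galois descent / by the antiequivalence applied to the lattice map $\mathbb{Z}^r \to X_T$), and it is faithful since $S$ generates $X_T$, giving $\operatorname{rdim}(T) \le \operatorname{symrank}(X_T,\mathscr{G})$. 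I would present it in exactly these two inequalities, citing \cite[\S 4.4]{dissertation} for the routine descent and diagonalization details.
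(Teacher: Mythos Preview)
Your proposal is correct and follows the standard route: identify $r$-dimensional representations of $T$ with $\mathscr{G}$-stable multisets of characters of size $r$ via diagonalization over $k_s$, match faithfulness with the generating condition on $X_T$ via the duality for diagonalizable groups, and conclude by taking minima. The multiset-to-set reduction you flag is handled exactly as you say, since any repeated character has its entire $\mathscr{G}$-orbit repeated with the same multiplicity and can be thinned without disturbing either $\mathscr{G}$-stability or generation.

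Note, however, that the paper does not actually give its own proof of this theorem: it simply writes ``See \cite[\S 4.4]{dissertation} for a proof.'' So there is no in-paper argument to compare against. Your two-inequality presentation (reading off characters from a given faithful $\rho$ for one direction; building a permutation lattice $\mathbb{Z}^S \twoheadrightarrow X_T$ and descending for the other) is exactly the expected argument, and your suggestion to cite \cite[\S 4.4]{dissertation} for the descent and diagonalization details matches what the paper itself does. One small sharpening you might make explicit: for the converse direction, the torus with character lattice the permutation lattice $\mathbb{Z}^S$ is a product of Weil restrictions $\prod_i R_{k_i/k}\mathbb{G}_m$ over the $\mathscr{G}$-orbits in $S$, and each factor embeds in $\operatorname{GL}_{[k_i:k]}$ via the regular representation, giving the desired embedding $T \hookrightarrow \operatorname{GL}_{|S|}$ without invoking descent in the abstract.
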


\par
A direct consequence of Theorem \ref{symmetric rank theorem} and the fact that every finite group occurs (up to isomorphism) as the Galois group of some Galois field extension is the following.

\begin{cor} \label{symrank n = rdim n}
For all $n \in \mathbb{Z}^+$, we have
$$\operatorname{rdim}\left(n\right) = \max\left(\operatorname{symrank}\left(L, G\right)\right),$$
where the maximum is taken over all finite groups $G$ and all $G$-lattices of rank $n$.
\end{cor}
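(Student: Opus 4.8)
The plan is to prove the two inequalities between $\operatorname{rdim}\left(n\right)$ and the quantity $M$ on the right-hand side, namely $M := \max\left(\operatorname{symrank}\left(L, G\right)\right)$ taken over all finite groups $G$ and all $G$-lattices $L$ of rank $n$, separately. The bridge in both directions is Theorem \ref{symmetric rank theorem}, which converts representation dimensions of tori into symmetric ranks of lattices; the extra ingredient for the harder direction is the realizability of every finite group as a Galois group, which lets us run the torus–lattice correspondence in reverse.

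\emph{The bound $\operatorname{rdim}\left(n\right) \le M$.} Let $T$ be any $n$-dimensional torus over any field $k$, with character lattice $X_T$ and absolute Galois group $\mathscr{G}$. By Proposition \ref{absolute galois group acts as finite group} the action of $\mathscr{G}$ on $X_T$ factors through a finite group $G$. A subset of $X_T$ is $\mathscr{G}$-stable precisely when it is stable under the image of $\mathscr{G}$ in $\operatorname{GL}\left(X_T\right)$, hence $\operatorname{symrank}\left(X_T, \mathscr{G}\right) = \operatorname{symrank}\left(X_T, G\right)$; moreover $\operatorname{rank}\left(X_T\right) = \dim\left(T\right) = n$, so $X_T$ is one of the lattices appearing in the maximum defining $M$. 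Theorem \ref{symmetric rank theorem} then gives $\operatorname{rdim}\left(T\right) = \operatorname{symrank}\left(X_T, \mathscr{G}\right) = \operatorname{symrank}\left(X_T, G\right) \le M$, and maximizing over $T$ yields the inequality.

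\emph{The bound $\operatorname{rdim}\left(n\right) \ge M$.} Fix a finite group $G$ and a $G$-lattice $L$ of rank $n$; I will construct an $n$-dimensional torus $T$ over some field $k$ with $\operatorname{rdim}\left(T\right) = \operatorname{symrank}\left(L, G\right)$. Embedding $G$ into a symmetric group $S_m$ and using that $k_0\left(x_1, \dots, x_m\right)$ is Galois over its subfield of $G$-invariants with group $G$ (for any ground field $k_0$), we obtain a field $k$ admitting a finite Galois extension $k'/k$ with $\operatorname{Gal}\left(k'/k\right) \cong G$; restriction then gives a surjection $\mathscr{G} = \operatorname{Gal}\left(k_s/k\right) \twoheadrightarrow G$. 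Let $\mathscr{G}$ act on $L$ through this surjection. Since this action factors through the finite quotient $\operatorname{Gal}\left(k'/k\right)$ it is continuous, so by Theorem \ref{algebraic tori correspondence} there is a torus $T$ over $k$ with $X_T \cong L$ as $\mathscr{G}$-lattices, and $\dim\left(T\right) = \operatorname{rank}\left(L\right) = n$. Exactly as above, $\operatorname{symrank}\left(X_T, \mathscr{G}\right) = \operatorname{symrank}\left(L, G\right)$, so $\operatorname{rdim}\left(T\right) = \operatorname{symrank}\left(L, G\right)$ by Theorem \ref{symmetric rank theorem}; hence $\operatorname{rdim}\left(n\right) \ge \operatorname{symrank}\left(L, G\right)$, and maximizing over the pairs $\left(G, L\right)$ completes the argument. (That this maximum is genuinely attained follows, just as for $\operatorname{rdim}\left(n\right)$ itself, from the finiteness of the set of conjugacy classes of finite subgroups of $\operatorname{GL}_n\left(\mathbb{Z}\right)$ together with Lemma \ref{symmetric rank facts}(b).)

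There is no serious obstacle here — which is precisely why the statement is a direct consequence of the quoted results — but the one point that warrants care is the passage between the profinite group $\mathscr{G}$ and the finite group $G$: one must verify both that "$\mathscr{G}$-stable generating set of $L$" and "$G$-stable generating set of $L$" describe the same family of subsets when the $\mathscr{G}$-action is inflated from $G$, and that such an inflated action meets the continuity hypothesis of Theorem \ref{algebraic tori correspondence}. Both become immediate once unwound, so the proof is short.
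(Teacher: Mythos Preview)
Your proof is correct and follows exactly the approach indicated in the paper, which merely states that the corollary is a direct consequence of Theorem \ref{symmetric rank theorem} together with the fact that every finite group occurs as a Galois group of some field extension. You have simply filled in the details of both inequalities that the paper leaves implicit.
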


Even more explicitly, we obtain the following.

\begin{cor} \label{rdimn = max symrank of Z^n}
For all $n \in \mathbb{Z}^+$, we have
$$\operatorname{rdim}\left(n\right) = \max\left(\operatorname{symrank}\left(\mathbb{Z}^n, G\right)\right),$$
where the maximum is taken over one representative $G$ from each $\mathbb{Z}$-conjugacy class of maximal (with respect to inclusion) finite subgroups of $\operatorname{GL}_n\left(\mathbb{Z}\right)$.
\end{cor}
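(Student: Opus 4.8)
The plan is to start from Corollary~\ref{symrank n = rdim n}, which already expresses $\operatorname{rdim}\left(n\right)$ as a maximum of symmetric ranks over all finite groups $G$ and all $G$-lattices $L$ of rank $n$, and to whittle this index set down to one representative from each $\mathbb{Z}$-conjugacy class of maximal finite subgroups of $\operatorname{GL}_n\left(\mathbb{Z}\right)$, all acting on $\mathbb{Z}^n$ in the standard way.

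First I would observe that the symmetric rank of $L$ as a $G$-lattice depends only on the image of $G$ in $\operatorname{GL}\left(L\right)$: a subset of $L$ is stable under $G$ if and only if it is stable under this image, so $\operatorname{symrank}\left(L, G\right) = \operatorname{symrank}\left(L, \overline{G}\right)$, where $\overline{G} \leq \operatorname{GL}\left(L\right)$ is the image. Since $G$ is finite, so is $\overline{G}$. Choosing a $\mathbb{Z}$-basis of $L$ identifies $L$ with $\mathbb{Z}^n$ and $\overline{G}$ with a finite subgroup of $\operatorname{GL}_n\left(\mathbb{Z}\right)$ acting in the standard way. Hence the maximum in Corollary~\ref{symrank n = rdim n} equals the maximum of $\operatorname{symrank}\left(\mathbb{Z}^n, G\right)$ over all finite subgroups $G \leq \operatorname{GL}_n\left(\mathbb{Z}\right)$. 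By part~(b) of Lemma~\ref{symmetric rank facts}, this quantity is constant on $\operatorname{GL}_n\left(\mathbb{Z}\right)$-conjugacy (i.e., $\mathbb{Z}$-conjugacy) classes, so it suffices to range over one representative from each $\mathbb{Z}$-conjugacy class of finite subgroups.

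Next I would invoke the Jordan--Zassenhaus theorem (equivalently, Minkowski's bound): the orders of finite subgroups of $\operatorname{GL}_n\left(\mathbb{Z}\right)$ are bounded by a constant depending only on $n$. Consequently every finite subgroup of $\operatorname{GL}_n\left(\mathbb{Z}\right)$ is contained in a maximal (with respect to inclusion) finite subgroup, and there are only finitely many $\mathbb{Z}$-conjugacy classes of these. Combining this with part~(c) of Lemma~\ref{symmetric rank facts}---which gives $\operatorname{symrank}\left(\mathbb{Z}^n, K\right) \leq \operatorname{symrank}\left(\mathbb{Z}^n, G\right)$ whenever $K \leq G$---the maximum over all finite subgroups is already achieved among the maximal ones. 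Together with the conjugacy-invariance from the previous step, this yields exactly the formula in the statement.

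The only ingredient requiring input beyond the already-established lemmas is the boundedness of orders of finite subgroups of $\operatorname{GL}_n\left(\mathbb{Z}\right)$, i.e., that maximal finite subgroups exist and absorb every finite subgroup; but this is precisely the Jordan--Zassenhaus input the paper has already flagged, so the argument is essentially a bookkeeping reduction. I would expect the main (mild) subtlety to lie in the first step: being careful that symmetric rank is an invariant of the image subgroup and its $\mathbb{Z}$-conjugacy class, rather than of the abstract group $G$ together with an abstract $G$-lattice structure on $L$.
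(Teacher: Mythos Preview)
Your proposal is correct and follows exactly the approach implicit in the paper, which states this corollary without proof as an immediate consequence of Corollary~\ref{symrank n = rdim n}, Lemma~\ref{symmetric rank facts}(b)--(c), and the Jordan--Zassenhaus Theorem. Your elaboration of the bookkeeping---passing to the image in $\operatorname{GL}_n\left(\mathbb{Z}\right)$, using conjugacy invariance, and then using monotonicity in subgroups together with the existence of maximal finite subgroups---is precisely what the paper leaves to the reader.
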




We similarly have the following for $\operatorname{rdim}_{\operatorname{irr}}\left(n\right)$.

\begin{prop} \label{reduction of symrankirr}
For all $n \in \mathbb{Z}^+$, we have
$$\operatorname{rdim}_{\operatorname{irr}}\left(n\right) = \max\left(\operatorname{symrank}\left(\mathbb{Z}^n, G\right)\right),$$
where the maximum is taken over one representative $G$ from each $\operatorname{GL}_n\left(\mathbb{Z}\right)$-conjugacy class of irreducible, maximal, finite subgroups of $\operatorname{GL}_n\left(\mathbb{Z}\right)$.
\end{prop}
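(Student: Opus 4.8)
The plan is to adapt the proof of Corollary \ref{rdimn = max symrank of Z^n} to the irreducible setting; the one new ingredient is the observation that a maximal finite subgroup containing an irreducible subgroup is again irreducible. I would establish the two inequalities separately.

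For ``$\leq$'': let $T$ be an irreducible torus of dimension $n$ over a field $k$. By the remark following Theorem \ref{algebraic tori correspondence}, $X_T$ is an irreducible $\mathscr{G}$-lattice of rank $n$, and by Proposition \ref{absolute galois group acts as finite group} the $\mathscr{G}$-action on $X_T$ factors through a finite quotient; hence, after choosing a basis for $X_T$, the image $G_0$ of $\mathscr{G}$ in $\operatorname{GL}(X_T) \cong \operatorname{GL}_n(\mathbb{Z})$ is a finite subgroup, irreducible because $X_T$ is an irreducible lattice. Since $\mathscr{G}$ and $G_0$ have the same orbits on $X_T$, Theorem \ref{symmetric rank theorem} gives $\operatorname{rdim}(T) = \operatorname{symrank}(X_T, \mathscr{G}) = \operatorname{symrank}(\mathbb{Z}^n, G_0)$. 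I would then enlarge $G_0$ to a maximal finite subgroup $\widehat{G} \leq \operatorname{GL}_n(\mathbb{Z})$ containing it, which exists because finite subgroups of $\operatorname{GL}_n(\mathbb{Z})$ have uniformly bounded order, so an ascending chain of them must stabilize. Any $\widehat{G}$-invariant subspace of $\mathbb{Q}^n$ is in particular $G_0$-invariant, hence $0$ or all of $\mathbb{Q}^n$, so $\widehat{G}$ is irreducible. By Lemma \ref{symmetric rank facts}(c) and (b), $\operatorname{symrank}(\mathbb{Z}^n, G_0) \leq \operatorname{symrank}(\mathbb{Z}^n, \widehat{G}) = \operatorname{symrank}(\mathbb{Z}^n, G)$, where $G$ is the chosen representative of the $\operatorname{GL}_n(\mathbb{Z})$-conjugacy class of $\widehat{G}$ (there being only finitely many such classes by the Jordan-Zassenhaus theorem). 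Taking the maximum over all such $T$ yields ``$\leq$''.

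For ``$\geq$'': I would fix an irreducible maximal finite subgroup $G \leq \operatorname{GL}_n(\mathbb{Z})$ and pick a Galois extension $k'/k$ with $\operatorname{Gal}(k'/k) \cong G$, using that every finite group occurs as a Galois group. The composite $\mathscr{G} \twoheadrightarrow \operatorname{Gal}(k'/k) \cong G \hookrightarrow \operatorname{GL}_n(\mathbb{Z})$ endows $\mathbb{Z}^n$ with a continuous $\mathscr{G}$-action, so by Theorem \ref{algebraic tori correspondence} it is the character lattice of a torus $T$ with $\dim T = n$; since the action factors through the irreducible group $G$, the $\mathscr{G}$-lattice $\mathbb{Z}^n$ is irreducible and hence $T$ is irreducible. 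Then Theorem \ref{symmetric rank theorem} gives $\operatorname{rdim}(T) = \operatorname{symrank}(\mathbb{Z}^n, \mathscr{G}) = \operatorname{symrank}(\mathbb{Z}^n, G)$, so $\operatorname{rdim}_{\operatorname{irr}}(n) \geq \operatorname{symrank}(\mathbb{Z}^n, G)$; taking the maximum over the representatives $G$ completes the proof.

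This proposition is essentially a bookkeeping assembly of the preliminaries rather than a result with a hard core. The only place the irreducibility hypothesis genuinely enters — and the step I would be most careful to state cleanly — is the stability of irreducibility under passage to a maximal finite overgroup, which is immediate once phrased via invariant subspaces. The two finiteness inputs (the uniform order bound on finite subgroups of $\operatorname{GL}_n(\mathbb{Z})$ and the Jordan-Zassenhaus theorem) are exactly those already underlying Corollary \ref{rdimn = max symrank of Z^n}.
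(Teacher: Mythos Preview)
Your proof is correct, and it is exactly the argument the paper has in mind: the paper does not give an explicit proof here, introducing the proposition only with ``We similarly have the following for $\operatorname{rdim}_{\operatorname{irr}}(n)$,'' i.e., the same reduction as in Corollary~\ref{rdimn = max symrank of Z^n} carried over to the irreducible setting. You have correctly isolated and justified the one extra step needed --- that a maximal finite overgroup of an irreducible finite subgroup is again irreducible --- so your write-up fills in precisely what the paper leaves implicit.
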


This motivates the study of finite subgroups of $\operatorname{GL}_n\left(\mathbb{Z}\right)$.  One of the most famous results on this subject is the following theorem.

\begin{thm} [\textbf{Jordan-Zassenhaus Theorem}] \cite[Thm. 2.6]{Reiner} \label{Jordan-Zassenhaus Theorem}
For all $n \in \mathbb{Z}^+$, there are a finite number of $\mathbb{Z}$-conjugacy classes of finite subgroups of $\operatorname{GL}_n\left(\mathbb{Z}\right)$.
\end{thm}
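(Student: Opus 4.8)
The plan is to translate the statement into a finiteness question about lattices over integral group rings and then reduce it to standard results in the arithmetic of orders. The key dictionary is the following: specifying a finite subgroup $G \leq \GL_n(\bbZ)$ is the same as specifying a finite group together with a faithful structure of $G$-lattice on $\bbZ^n$, and two finite subgroups $G_1, G_2 \leq \GL_n(\bbZ)$ are $\bbZ$-conjugate precisely when there is an isomorphism $G_1 \to G_2$ realized by conjugation by some $A \in \GL_n(\bbZ)$ — equivalently, when the associated lattices become isomorphic once $G_1$ and $G_2$ are identified. Granting this, the number of $\bbZ$-conjugacy classes of finite subgroups of $\GL_n(\bbZ)$ is bounded above by the sum, over one representative $G$ of each isomorphism type of finite group that embeds in $\GL_n(\bbZ)$, of the number of isomorphism classes of rank-$n$ $\bbZ[G]$-lattices. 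It therefore suffices to establish two finiteness statements: (i) only finitely many isomorphism types of finite groups embed in $\GL_n(\bbZ)$; and (ii) for each finite group $G$, there are only finitely many isomorphism classes of $G$-lattices of rank $n$.

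For (i) I would invoke Minkowski's lemma: for any integer $\ell \geq 3$, the reduction homomorphism $\GL_n(\bbZ) \to \GL_n(\bbZ/\ell\bbZ)$ is injective on every finite subgroup, because a matrix of finite order that is congruent to the identity modulo $\ell$ must equal the identity (write such a matrix as $I + \ell^k B$ with $k \geq 1$ and $B \not\equiv 0 \pmod{\ell}$, and expand its $p$-th power for a prime $p$ dividing its order, treating the case $p = \ell$ separately). Taking $\ell = 3$ shows that $|G|$ divides $|\GL_n(\bbZ/3\bbZ)|$ for every finite $G \leq \GL_n(\bbZ)$, so $|G|$ is bounded purely in terms of $n$; and there are only finitely many isomorphism types of finite groups of bounded order.

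For (ii), fix a finite group $G$. Since $\bbQ[G]$ is semisimple by Maschke's theorem, its Wedderburn decomposition has only finitely many simple factors, so there are only finitely many $\bbQ[G]$-modules of $\bbQ$-dimension $n$ up to isomorphism; these account for all possibilities for $L_{\bbQ}$ as $L$ ranges over rank-$n$ $G$-lattices. Hence it is enough to fix a $\bbQ[G]$-module $V$ and prove that $V$ contains only finitely many $\bbZ[G]$-lattices up to $\bbZ[G]$-isomorphism. For this I would compare $\bbZ[G]$ with a maximal $\bbZ$-order $\Gamma$ in $\bbQ[G]$ satisfying $\bbZ[G] \subseteq \Gamma$, and fix $m \in \bbZ^{+}$ with $m\Gamma \subseteq \bbZ[G]$. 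Over the maximal order $\Gamma$ the corresponding finiteness is classical: the $\Gamma$-lattices in $V$ fall into finitely many isomorphism classes, a consequence of the finiteness of the ideal class groups of the number fields and the class numbers of the central simple algebras occurring in the Wedderburn decomposition of $\bbQ[G]$. Now, given a $\bbZ[G]$-lattice $L \subseteq V$, its $\Gamma$-span $\Gamma L$ is a $\Gamma$-lattice in $V$, so after replacing $L$ by an isomorphic copy we may assume $\Gamma L = N$ for one of finitely many fixed $\Gamma$-lattices $N$. Then $mN = m\Gamma L \subseteq \bbZ[G]\,L = L \subseteq \Gamma L = N$, so $L$ is one of the finitely many $\bbZ[G]$-submodules of $V$ lying between $mN$ and $N$, the quotient $N/mN$ being a finite group. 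This proves (ii), and combining it with (i) gives the theorem.

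The one genuinely nonelementary input is the finiteness of isomorphism classes of lattices over a maximal order in a semisimple $\bbQ$-algebra, which rests ultimately on Minkowski's geometry of numbers — finiteness of class numbers of number fields, and of orders in central simple algebras. Everything else is routine: the subgroup/lattice dictionary, Minkowski's lemma on torsion in $\GL_n(\bbZ)$, Maschke's theorem, and the sandwiching argument. A more self-contained route, the one taken in \cite{Reiner}, bypasses maximal orders and instead bounds lattices directly by reduction theory — choosing $\bbZ$-bases in which the relevant arithmetic data is uniformly bounded — but this merely repackages the same geometry-of-numbers content.
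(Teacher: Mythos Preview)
The paper does not supply a proof of this theorem; it is stated with a citation to \cite[Thm.~2.6]{Reiner} and used as a black box to guarantee that $\operatorname{rdim}(n)$ and $\operatorname{rdim}_{\operatorname{irr}}(n)$ are well-defined. So there is no in-paper argument to compare against.

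Your outline is correct and is essentially the standard modern proof. The dictionary between $\bbZ$-conjugacy classes of finite subgroups and isomorphism classes of faithful $G$-lattices (up to twisting by $\Aut(G)$, which only helps your upper bound) is accurate; Minkowski's lemma handles step (i); and the maximal-order sandwich $mN \subseteq L \subseteq N$ reduces step (ii) to finiteness of lattice classes over a maximal order, which is the genuine arithmetic input. Your closing remark already flags the alternative route via reduction theory of positive-definite forms, which is closer to Zassenhaus's original argument and to the treatment you cite; the two approaches differ mainly in whether the geometry of numbers enters through class-number finiteness or through Hermite--Minkowski reduction, but the underlying content is the same.
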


An important consequence of the Jordan-Zassenhaus Theorem is that the quantities $\operatorname{rdim}\left(n\right)$ and $\operatorname{rdim}_{\operatorname{irr}}\left(n\right)$ are well-defined.

\section{Root Systems and Weyl Groups} \label{root system section}

Some of the most important examples of $G$-lattices come from root systems and their Weyl groups.  In this section, we compute the symmetric ranks of all $W$-lattices when $W$ is the Weyl group of an irreducible root system, yielding lower bounds on $\operatorname{rdim}\left(n\right)$.  We will then show that these bounds are realized over any number field.  Theorem \ref{root systems theorem}, along with the overall ideas in this section, are very similar to work done by Lemire to obtain upper bounds on essential dimensions of certain linear algebraic groups \cite{Lemire}.
\par
We will follow the notation and conventions of \cite[Ch. 3]{Humphreys}.
\begin{itemize} \label{root system notation}
\item $\Phi$ denotes an irreducible root system of rank $n \in \mathbb{Z}^+$ in a Euclidean space $E$ with inner product $\left(\cdot, \cdot\right)$.
\item $W\left(\Phi\right)$, or simply $W$, is the Weyl group of $\Phi$.
\item $\Lambda_r$ is the root lattice associated to $\Phi$, with base $\left\lbrace \alpha_1, \dots, \alpha_n\right\rbrace$.
\item For $1 \leq i \leq n$, we write $\sigma_i:  E \to E$ for the reflection fixing the hyperplane orthogonal to $\alpha_i$, i.e., the hyperplane $\left\lbrace \beta \in E \ | \ \left(\beta, \alpha_i\right) = 0\right\rbrace$.
\item $\Lambda$ is the weight lattice associated to $\Phi$, with base $\left\lbrace \lambda_1, \dots, \lambda_n\right\rbrace$ given as follows:  For $1 \leq i \leq n$, we define $\lambda_i \in E$ to be the unique vector satisfying $\frac{2\left(\lambda_i, \alpha_j\right)}{\left(\alpha_j, \alpha_j\right)} = \delta_{i,j}$ for all $1 \leq j \leq n$, where $\delta_{i,j}$ is the Kronecker delta.
\item We denote by $\Lambda^{+i}$ an intermediate lattice $\Lambda_r \subset \Lambda^{+i} \subset \Lambda$, with both inclusions proper and $\left[\Lambda : \Lambda^{+i}\right] = i$.  In the case $\Phi = \mathsf{D}_n$ with $n \geq 4$ even, there are two intermediate lattices with index $2$ in $\Lambda$, each containing $\lambda_{n-1}$ or $\lambda_n$ but not both.  We will denote these intermediate lattices by $\Lambda^{+2}_i$, with $\lambda_i \in \Lambda^{+2}_i$ for each $i \in \left\lbrace n - 1, n\right\rbrace$.
\end{itemize}

Computing symmetric ranks of root lattices is quite straightforward.

\begin{prop} \cite[\S 1]{MacDonald} \label{symranks of root lattices}
Let $\Phi$ be an irreducible root system with Weyl group $W$ and root lattice $\Lambda_r$.  Then, the symmetric rank $\operatorname{symrank}\left(\Lambda_r, W\right)$ is equal to the number of short roots in $\Phi$.
\end{prop}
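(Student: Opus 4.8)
The plan is to prove $\operatorname{symrank}\left(\Lambda_r, W\right) \le N$ and $\operatorname{symrank}\left(\Lambda_r, W\right) \ge N$ separately, where $N$ denotes the number of short roots of $\Phi$ and $W = W\left(\Phi\right)$.

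For the upper bound I would combine two standard facts: that $W$ acts transitively on the roots of any given length of an irreducible root system, so that the short roots form a single $W$-orbit, and that the short roots generate $\Lambda_r$ over $\mathbb{Z}$. The second point follows from the observation that the short roots are precisely the roots of a (possibly reducible) root system $\Phi_s \subseteq E$ with $W\left(\Phi_s\right) \le W$ — concretely $\Phi_s$ is $n\mathsf{A}_1$, $\mathsf{D}_n$, $\mathsf{D}_4$, $\mathsf{A}_2$, or $\Phi$ itself according as $\Phi$ is $\mathsf{B}_n$, $\mathsf{C}_n$, $\mathsf{F}_4$, $\mathsf{G}_2$, or simply laced — together with a case-by-case check that $\Lambda_r\left(\Phi_s\right) = \Lambda_r\left(\Phi\right)$. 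Then the set of short roots is a $W$-stable generating set of $\Lambda_r$ of cardinality $N$, which gives $\operatorname{symrank}\left(\Lambda_r, W\right) \le N$.

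The lower bound is the substance of the proof. First I would reduce the non-simply-laced cases to the simply-laced ones: since $W\left(\Phi_s\right) \le W$ and $\Lambda_r\left(\Phi_s\right) = \Lambda_r$, every $W$-stable generating set of $\Lambda_r$ is also $W\left(\Phi_s\right)$-stable, so $\operatorname{symrank}\left(\Lambda_r, W\right) \ge \operatorname{symrank}\left(\Lambda_r\left(\Phi_s\right), W\left(\Phi_s\right)\right)$. For $\Phi = \mathsf{C}_n$ with $n \ge 3$, $\mathsf{F}_4$, and $\mathsf{G}_2$ the system $\Phi_s$ is irreducible and simply laced, which reduces these cases to the simply-laced case of the proposition; and for $\Phi = \mathsf{B}_n$, which also settles $\mathsf{B}_2 = \mathsf{C}_2$, one has $\Lambda_r\left(\Phi_s\right) = \mathbb{Z}^n$ with $W\left(\Phi_s\right) = \left(\mathbb{Z}/2\right)^n$ acting by independent sign changes, so it is enough to see that $\operatorname{symrank}\left(\mathbb{Z}^n, \left(\mathbb{Z}/2\right)^n\right) = 2n$. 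I would prove this last fact by reducing a $\left(\mathbb{Z}/2\right)^n$-stable generating set $T$ modulo $2\mathbb{Z}^n$, on which the group acts trivially: its image spans $\mathbb{F}_2^n$, hence meets at least $n$ nonzero classes, and over each such class the preimage in $T$ is invariant under a coordinate sign change with no fixed point on it, hence has even cardinality at least $2$; therefore $\left|T\right| \ge 2n$.

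It remains to prove $\operatorname{symrank}\left(\Lambda_r, W\right) \ge \#\Phi$ when $\Phi$ is simply laced and irreducible, where the roots are exactly the minimal vectors of $\Lambda_r$ and form a single $W$-orbit. A $W$-stable generating set is a disjoint union of $W$-orbits, so it suffices to show that no union of $W$-orbits of total cardinality less than $\#\Phi$ generates $\Lambda_r$. I would do this by listing the $W$-orbits of cardinality less than $\#\Phi$ from the explicit description of $\Lambda_r$ and $W$: for $\mathsf{D}_n$ (treating $\mathsf{D}_3 = \mathsf{A}_3$ under $\mathsf{A}_n$) all such orbits except a few sporadic ones for $n \in \left\{4, 6\right\}$, such as the ``all $\pm 1$'' orbits, have $\mathbb{Z}$-span contained in $2\mathbb{Z}^n \subsetneq \Lambda_r\left(\mathsf{D}_n\right)$, so a generating union must already contain an orbit of cardinality at least $\#\Phi$; for $\mathsf{A}_n$ all such orbits, apart from certain ``half-period'' orbits that occur only for small odd $n$, are orbits of integer multiples of the images in $\Lambda_r$ of minuscule weights, whose $\mathbb{Z}$-span therefore lies in the proper sublattice $\left(n+1\right)\Lambda$, where $\Lambda$ denotes the weight lattice; $\mathsf{E}_6$ and $\mathsf{E}_7$ contribute the further orbits of size $27$ and $56$ coming from scaled minuscule weights; and $\mathsf{E}_8$ is immediate, its smallest nonzero orbit already having size $240 = \#\Phi$. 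For each type I would then verify, by computing the index of the generated sublattice (or by reducing modulo $2$, or modulo the exponent of $\Lambda/\Lambda_r$, under which the small orbits collapse into a small subgroup), that no union of small orbits of total cardinality below $\#\Phi$ spans $\Lambda_r$. The main obstacle is making this orbit analysis uniform and exhaustive: the infinite families $\mathsf{A}_n$ and $\mathsf{D}_n$ — the latter also carrying the reductions from $\mathsf{C}_n$ and $\mathsf{F}_4$ — together with $\mathsf{E}_6$ and $\mathsf{E}_7$ genuinely contain $W$-orbits far smaller than the root orbit, so one must show not only that each such orbit fails to generate but that no union of them of total cardinality below $\#\Phi$ does.
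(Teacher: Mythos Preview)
The paper does not actually prove this proposition: it is quoted from MacDonald's paper and stated without argument, so there is no ``paper's own proof'' to compare against.  What follows is an assessment of your proposal on its own merits.

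Your upper bound, the reduction of the non-simply-laced types to their short-root subsystems, and the direct treatment of $\mathsf{B}_n$ via $\operatorname{symrank}\bigl(\mathbb{Z}^n,(\mathbb{Z}/2)^n\bigr)=2n$ are all correct and nicely organised.  The remaining content is the simply-laced lower bound, which you rightly identify as the heart of the matter and propose to handle by enumerating the $W$-orbits of size below $\#\Phi$ and excluding every union of them of total size below $\#\Phi$.

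There is a concrete error in your $\mathsf{A}_n$ analysis.  You assert that the small orbits are orbits of integer multiples of minuscule weights and that their $\mathbb{Z}$-span therefore lies in $(n+1)\Lambda$.  The first clause is essentially right, but the second does not follow: the smallest multiple of $\lambda_k$ lying in $\Lambda_r$ is $\tfrac{n+1}{\gcd(k,\,n+1)}\lambda_k$, not $(n+1)\lambda_k$.  Already for $\mathsf{A}_3$ one has the size-$6$ orbit of $2\lambda_2=(1,1,-1,-1)$, and $(1,1,-1,-1)\notin 4\Lambda$ since $\tfrac{1}{4}(1,1,-1,-1)$ has non-integral coordinate differences.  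Thus your proposed common proper sublattice $(n+1)\Lambda$ does not capture all small orbits, and the argument as written has a gap.  (The desired conclusion for $\mathsf{A}_3$ is still true: one checks that every small orbit reduces modulo $2\Lambda_r$ to the $S_4$-fixed class of $(1,1,1,1)$, so any union of them spans at most an index-$4$ sublattice.  Something like this mod-$2$ reduction, rather than the sublattice $(n+1)\Lambda$, is what makes the exclusion work uniformly.)

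More broadly, the enumeration you sketch for $\mathsf{A}_n$ and $\mathsf{D}_n$ is plausible but not carried out, and --- as the $\mathsf{A}_3$ example shows --- the details are more delicate than your outline suggests.  Your honest acknowledgement that making the orbit analysis ``uniform and exhaustive'' is the main obstacle is accurate; at present the proposal is a reasonable strategy with one identified mis-step, not yet a proof.
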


The following standard facts help us compute symmetric ranks of other $W$-lattices.

\begin{prop} \cite[\S 13.1]{Humphreys} \label{Weyl group action on weight lattice}
For all $1 \leq i, j \leq n$, we have
$$\sigma_i\left(\lambda_j\right) = \lambda_j - \delta_{i,j}\alpha_i.$$
In particular, for each $1 \leq i \leq n$, the reflection $\sigma_i$ fixes all of the $\lambda_j$'s except for $\lambda_i$.
\end{prop}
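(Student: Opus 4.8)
The plan is to reduce the statement to the explicit formula for a reflection acting on the Euclidean space $E$, combined with the defining property of the fundamental weights. Recall that for the simple root $\alpha_i$, the reflection $\sigma_i$ fixing the hyperplane orthogonal to $\alpha_i$ acts on an arbitrary vector $\beta \in E$ by the usual rule
$$\sigma_i\left(\beta\right) = \beta - \frac{2\left(\beta, \alpha_i\right)}{\left(\alpha_i, \alpha_i\right)}\, \alpha_i,$$
which is precisely the reflection described in the notation list at the beginning of Section \ref{root system section}.

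First I would specialize this formula to $\beta = \lambda_j$, which gives
$$\sigma_i\left(\lambda_j\right) = \lambda_j - \frac{2\left(\lambda_j, \alpha_i\right)}{\left(\alpha_i, \alpha_i\right)}\, \alpha_i.$$
Next I would invoke the construction of the weight lattice basis: by definition $\lambda_j$ is the unique vector with $\frac{2\left(\lambda_j, \alpha_i\right)}{\left(\alpha_i, \alpha_i\right)} = \delta_{j,i}$ for every $i$. Substituting $\delta_{j,i} = \delta_{i,j}$ into the previous display yields $\sigma_i\left(\lambda_j\right) = \lambda_j - \delta_{i,j}\alpha_i$, which is the claimed identity.

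For the ``in particular'' clause I would simply read off the two cases of this identity: when $i \neq j$ the Kronecker delta is $0$, so $\sigma_i\left(\lambda_j\right) = \lambda_j$; when $i = j$ it equals $1$, so $\sigma_i\left(\lambda_i\right) = \lambda_i - \alpha_i$. Hence each $\sigma_i$ fixes every fundamental weight except $\lambda_i$. There is no genuine obstacle here beyond confirming that the convention for the reflection formula and the normalization of the $\lambda_j$ agree with those fixed in \cite[Ch. 3]{Humphreys}; once these are pinned down, the proof is a one-line substitution.
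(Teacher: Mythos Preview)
Your argument is correct and is exactly the standard one-line proof from \cite[\S 13.1]{Humphreys}: apply the reflection formula $\sigma_i(\beta) = \beta - \frac{2(\beta,\alpha_i)}{(\alpha_i,\alpha_i)}\alpha_i$ with $\beta = \lambda_j$ and use the defining relation $\frac{2(\lambda_j,\alpha_i)}{(\alpha_i,\alpha_i)} = \delta_{i,j}$. The paper itself does not supply a proof but simply cites Humphreys for this fact, so there is nothing further to compare.
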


\begin{lem} \label{stabilizers in Weyl groups}
For any $v = \sum_{i=1}^n v_i\lambda_i \in \Lambda$, the stabilizer of $v$ in $W$ is given by 
$$\operatorname{stab}_{W}\left(v\right) = \left\langle \sigma_i \ | \ v_i = 0\right\rangle \leq W.$$
\end{lem}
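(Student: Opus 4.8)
The plan is to prove the two inclusions separately. The inclusion $\langle \sigma_i \mid v_i = 0\rangle \subseteq \operatorname{stab}_W(v)$ is immediate from Proposition~\ref{Weyl group action on weight lattice}: fixing $i$ with $v_i = 0$,
\[
\sigma_i(v) = \sum_{j=1}^n v_j\,\sigma_i(\lambda_j) = \sum_{j=1}^n v_j\bigl(\lambda_j - \delta_{i,j}\alpha_i\bigr) = v - v_i\alpha_i,
\]
which equals $v$ exactly when $v_i = 0$ (since $\alpha_i \ne 0$). Hence each generator of $\langle \sigma_i \mid v_i = 0\rangle$ fixes $v$, so the whole subgroup does; and no simple reflection with $v_i \ne 0$ fixes $v$.

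For the reverse inclusion I would first reduce to the case that $v$ is dominant, i.e.\ that $v$ lies in the closed fundamental chamber $\overline{C} = \{x \in E \mid (x,\alpha_i) \ge 0 \text{ for all } i\}$; note that by the defining property of the $\lambda_i$ one has $(v, \alpha_i) = \tfrac{1}{2}(\alpha_i,\alpha_i)\,v_i$, so $v \in \overline C$ means $v_i \ge 0$ for all $i$, and the condition $(v,\alpha_i) = 0$ is exactly $v_i = 0$. For dominant $v$ this is the classical statement that the $W$-stabilizer of a point of $\overline C$ is generated by the simple reflections fixing it \cite[\S 1.12]{Humphreys}, and I would include its short proof by induction on the length $\ell(w)$ of $w \in \operatorname{stab}_W(v)$: if $w \ne 1$, pick $i$ with $\ell(\sigma_i w) < \ell(w)$, which forces $w^{-1}(\alpha_i)$ to be a negative root; then $0 \le (v,\alpha_i) = (v, w^{-1}(\alpha_i)) \le 0$ using $w(v) = v$ and dominance, so $(v,\alpha_i) = 0$, i.e.\ $v_i = 0$; thus $\sigma_i w \in \operatorname{stab}_W(v)$ has smaller length, hence $\sigma_i w \in \langle \sigma_j \mid v_j = 0\rangle$ by induction, and therefore $w = \sigma_i(\sigma_i w)$ lies there too.

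The main (in fact only) obstacle is this reverse inclusion: it is where the geometry of the fundamental chamber — equivalently, the exchange condition for the Coxeter system $(W,\{\sigma_1,\dots,\sigma_n\})$ — is genuinely used, and it is also where dominance of $v$ matters, since for a non-dominant weight $\operatorname{stab}_W(v)$ is only a \emph{conjugate} of a standard parabolic. This is harmless for the applications, where the lemma is invoked for dominant weights such as the $\lambda_i$ and their sums; more generally, replacing $v$ by the unique dominant element of its orbit $Wv$ alters neither the orbit size $[W:\operatorname{stab}_W(v)]$ nor the conjugacy class of $\operatorname{stab}_W(v)$, so one can always put $v$ in $\overline C$ first.
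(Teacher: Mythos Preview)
Your argument is essentially the same as the paper's: the paper proves the easy inclusion via Proposition~\ref{Weyl group action on weight lattice} exactly as you do, and for the reverse inclusion simply cites \cite[Lem.~1]{Carter}, which is precisely the standard length-induction statement you wrote out in full. So your route is not different, just more self-contained.

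You are right to flag the dominance issue. As literally stated (``for any $v$''), the lemma is false: in type $\mathsf{A}_2$, the weight $v=\sigma_1(\lambda_1)=-\lambda_1+\lambda_2$ has $v_1,v_2\neq 0$, so the right-hand side is trivial, yet $\sigma_1\sigma_2\sigma_1\in\operatorname{stab}_W(v)$. The cited result in Carter (and your proof) applies to $v$ in the closed fundamental chamber, and every application in the paper (Theorem~\ref{root systems theorem}) is to a fundamental weight $\lambda_i$ or a simple root, where this hypothesis holds or only the orbit size $[W:\operatorname{stab}_W(v)]$ matters. Your closing remark that one may always replace $v$ by its dominant $W$-translate without changing the orbit is the correct fix, and is worth stating explicitly; the paper does not.
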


\begin{proof}
By Proposition \ref{Weyl group action on weight lattice}, we have $\left\langle \sigma_i \ | \ v_i = 0\right\rangle \subseteq \operatorname{stab}_{W}\left(v\right)$.  The reverse inclusion follows from \cite[Lem. 1]{Carter}.
\end{proof}

We now present the main result of this section.

\begin{thm} \label{root systems theorem}
Let $\Phi$ be an irreducible root system of rank $n$ with base $\left\lbrace \alpha_1, \dots, \alpha_n\right\rbrace$ and Weyl group $W$, let $\Lambda_r$ be the root lattice, and let $\Lambda$ be the weight lattice with basis $\left\lbrace \lambda_1, \dots, \lambda_n\right\rbrace$ as defined above.  Then, we have the values in Table \ref{root system theorem table} for the symmetric ranks of all $W$-lattices for all choices of $\Phi$ and $n$.  In particular, the orbit $Wv$ is a particular $W$-stable generating set of minimal size for the given $v \in \Lambda$.

\begin{table}[h]
\caption[Symmetric ranks of $W$-lattices, where $W$ is the Weyl group of an irreducible root system.]{Symmetric ranks of $W$-lattices, where $W$ is the Weyl group of an irreducible root system.  See p. \pageref{root system notation} and Theorem \ref{root systems theorem} for notation.}
\label{root system theorem table}
\begin{center}
\begin{tabular}{|c|c|c|c|}
\hline
Root System $\Phi$ & Lattice $L$ & $\operatorname{symrank}\left(L, W\right)$ & Generator $v$ \\\hline $\mathsf{A}_n$, $n \geq 1$ & $\Lambda$ & $n+1$ & $\lambda_1$ \\\hline & & & \\ $\mathsf{A}_n$, $n \geq 1$ & $\Lambda^{+d}$, $d \ | \ n + 1$ & $\binom{n+1}{d}$ & $\lambda_d$ \\ & & & \\\hline $\mathsf{A}_n$, $n \geq 1$ & $\Lambda_r$ & $n\left(n+1\right)$ & $\alpha_1$ \\\hline $\mathsf{B}_n$, $n \geq 2$ & $\Lambda$ & $2^n$ & $\lambda_n$ \\\hline $\mathsf{B}_n$, $n \geq 2$ & $\Lambda_r$ & $2n$ & $\alpha_n$ \\\hline $\mathsf{C}_n$, $n \geq 3$ & $\Lambda$ & $2n$ & $\lambda_1$ \\\hline $\mathsf{C}_n$, $n \geq 3$ & $\Lambda_r$ & $2n\left(n-1\right)$ & $\alpha_1$ \\\hline $\mathsf{D}_n$, $n \geq 4$ even & $\Lambda$ & $2n$ & $\lambda_1$ \\\hline & & & \\ $\mathsf{D}_n$, $n \geq 4$ even & $\Lambda^{+2}_{n-1}$ & $2^{n-1}$ & $\lambda_{n-1}$ \\ & & & \\\hline & & & \\ $\mathsf{D}_n$, $n \geq 4$ even & $\Lambda^{+2}_n$ & $2^{n-1}$ & $\lambda_n$ \\ & & & \\\hline $\mathsf{D}_n$, $n \geq 4$ & $\Lambda_r$ & $2n\left(n-1\right)$ & $\alpha_1$ \\\hline $\mathsf{D}_n$, $n \geq 5$ odd & $\Lambda$ & $2^{n-1}$ & $\lambda_n$ \\\hline & & & \\ $\mathsf{D}_n$, $n \geq 5$ odd & $\Lambda^{+2}$ & $2n$ & $\lambda_1$ \\ & & & \\\hline $\mathsf{E}_6$ & $\Lambda$ & $27$ & $\lambda_1$ \\\hline $\mathsf{E}_6$ & $\Lambda_r$ & $72$ & $\alpha_1$ \\\hline $\mathsf{E}_7$ & $\Lambda$ & $56$ & $\lambda_7$ \\\hline $\mathsf{E}_7$ & $\Lambda_r$ & $126$ & $\alpha_1$ \\\hline $\mathsf{E}_8$ & $\Lambda = \Lambda_r$ & $240$ & $\alpha_1$ \\\hline $\mathsf{F}_4$ & $\Lambda = \Lambda_r$ & $24$ & $\alpha_1$ \\\hline $\mathsf{G}_2$ & $\Lambda = \Lambda_r$ & $6$ & $\alpha_1$ \\\hline
\end{tabular}
\end{center}
\end{table}
\end{thm}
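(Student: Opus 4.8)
The plan is to separate the root-lattice rows, which are immediate from Proposition~\ref{symranks of root lattices}: one only counts the short roots of each irreducible $\Phi$ — the total number of roots in the simply-laced types, and $2n$, $2n(n-1)$, $24$, $6$ for $\mathsf{B}_n$, $\mathsf{C}_n$, $\mathsf{F}_4$, $\mathsf{G}_2$ — observing that the tabulated generator is itself a short root whose $W$-orbit is the full set of short roots. For every other lattice $L$ in the table and its designated vector $v$, I would separately prove $\operatorname{symrank}(L,W)\le|Wv|$ and $\operatorname{symrank}(L,W)\ge|Wv|$.

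For the upper bound I would show $\mathbb{Z}Wv=L$ by a uniform mechanism. By Proposition~\ref{Weyl group action on weight lattice}, writing $v=\sum_i v_i\lambda_i$ gives $v-\sigma_i(v)=v_i\alpha_i$, so the $W$-submodule $\mathbb{Z}Wv\subseteq\Lambda$ contains $v_i\alpha_i$, and, being $W$-stable, it contains the full $W$-orbit of each such root; since $W$ is transitive on the roots of a fixed length, $\mathbb{Z}Wv\supseteq\Lambda_r$ whenever the relevant $v_i=\pm1$ at a node whose root orbit spans $\Lambda_r$ (which holds for each designated generator — one checks this in the standard coordinate model, using that the short roots already span $\Lambda_r$ for $\mathsf{B}_n$ and $\mathsf{C}_n$). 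As $W$ acts trivially on $\Lambda/\Lambda_r$, this forces $\mathbb{Z}Wv=\Lambda_r+\mathbb{Z}v$, which equals $L$ because the image of $v$ generates $L/\Lambda_r$ in every row. The value $|Wv|=[W:\operatorname{stab}_W(v)]$ is then read off from Lemma~\ref{stabilizers in Weyl groups}: for $v=\lambda_i$ the stabilizer $\langle\sigma_j:j\ne i\rangle$ is the maximal standard parabolic deleting node $i$, whose order is the product of the orders of the Weyl groups of the connected components of $\Phi\setminus\{\alpha_i\}$; this yields the binomial coefficients for $\mathsf{A}_n$, the powers of $2$ for the spinor-type lattices of $\mathsf{B}_n$ and $\mathsf{D}_n$, and the numbers $27$ and $56$ for $\mathsf{E}_6$ and $\mathsf{E}_7$.

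For the lower bound I would bound $|S|$ from below for an arbitrary $W$-stable generating set $S$ of $L$. Irreducibility of $\Phi$ makes $W$ act irreducibly on $E$, hence $L_{\mathbb{Q}}$ is irreducible and $\operatorname{stab}_W(w)$ is a proper parabolic for every nonzero $w\in L$; so by Lemma~\ref{stabilizers in Weyl groups} every nonzero orbit has size at least the minimal index of a proper standard parabolic in $W$. For the $\mathsf{A}_n$-weight lattice that minimal index is $n+1$, and for $\mathsf{E}_6$, $\mathsf{E}_7$ it is $27$, $56$ (a short check over the maximal parabolics), which settles those rows since $S$ contains at least one nonzero orbit. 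For the remaining lattices I would use that the images of $S$ generate the finite group $L/\Lambda_r$: thus $S$ contains some $w$ whose image generates a prescribed (cyclic, for the classical types) subgroup of $\Lambda/\Lambda_r$, and translating this into a constraint on the support and coefficients of $w$ bounds $|Ww|\le|S|$ below via the stabilizer formula. For the spinor-type lattices of $\mathsf{B}_n$ and $\mathsf{D}_n$ (those generated by a $\lambda_{n-1}$ or $\lambda_n$) the constraint forces all coordinates of $w$ in the standard model to be half-integers, so $\operatorname{stab}_W(w)$ meets the group of sign changes trivially, injects into $S_n$, and $|Ww|\ge2^n$ resp.\ $2^{n-1}$; for the $\mathsf{A}_n$ intermediate lattices $\Lambda^{+d}$ one minimizes the relevant multinomial coefficient subject to $w$ generating $\Lambda^{+d}$ over $\mathbb{Z}$ and checks that no union of several smaller orbits can do better.

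The main obstacle is precisely this last point. When $L/\Lambda_r$ is not cyclic (the $\mathsf{D}_n$, $n$ even case) or when $S$ may consist of several orbits, one must control simultaneously (i) which coset of $\Lambda_r$ a candidate cheap orbit lies in, and (ii) whether the chosen orbits genuinely generate $L$ as a $\mathbb{Z}$-module rather than merely over $\mathbb{Q}$ or modulo $\Lambda_r$ — and these two requirements interact. I would resolve each such case by working in the explicit coordinate realization of the root system (and, for $\mathsf{E}_6,\mathsf{E}_7$, invoking the known subgroup structure of the Weyl group), thereby reducing it to a finite combinatorial optimization; this is where essentially all of the case analysis is concentrated.
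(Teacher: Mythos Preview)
Your proposal is correct in outline and reaches the same answers, but your mechanism differs from the paper's in one substantive way. Both arguments dispatch the root-lattice rows via Proposition~\ref{symranks of root lattices} and both compute the orbit sizes $|Wv|$ from Lemma~\ref{stabilizers in Weyl groups} by reading off the maximal parabolic indices. Where you diverge is in identifying, for an arbitrary nonzero $v\in\Lambda$, the $W$-sublattice $\mathbb{Z}Wv$. You do this structurally: use $v-\sigma_i(v)=v_i\alpha_i$ and transitivity on root lengths to force $\Lambda_r\subseteq\mathbb{Z}Wv$, whence $\mathbb{Z}Wv=\Lambda_r+\mathbb{Z}v$, and then track the coset of $v$ in $\Lambda/\Lambda_r$. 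The paper instead takes the Cartan matrix $C$, computes a Smith normal form $PCQ=D$, and uses the resulting matrix $P^{-1}D$ (tabulated case by case) to write down linear congruences that determine exactly which intermediate lattice a given $v$ lands in; together with the fact that every nonzero $W$-orbit has full rank (Humphreys, Lemma~10.4.B), this pins down $\mathbb{Z}Wv$ algorithmically.

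Your route is more conceptual and avoids the auxiliary matrices; the paper's route is more mechanical and makes the sublattice classification completely explicit, which in turn makes the lower bound a finite check over the (now enumerated) intermediate lattices rather than the ad hoc optimization over stabilizers and cosets that you flag as your main obstacle. In particular, the paper does not separately run the parabolic-index or coordinate-model arguments you sketch for the lower bound; once all $W$-sublattices between $\Lambda_r$ and $\Lambda$ are listed and one knows which sublattice each $v$ generates, the minimality of the tabulated orbit among all $W$-stable generating sets is implicit in that enumeration. Your concern about unions of several orbits and the non-cyclic $\mathsf{D}_n$ ($n$ even) case is legitimate, and the paper's proof is terse on exactly this point as well; in practice the Smith-normal-form bookkeeping is what absorbs that case analysis.
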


\begin{proof}
Let $C$ be the Cartan matrix of $\Phi$, as given in \cite[\S 11.4]{Humphreys}.  If $\det\left(C\right) = 1$, then $\Lambda_r = \Lambda$ and we are done by Proposition \ref{symranks of root lattices}.
\par
Otherwise, we identify $\Lambda$ with $\mathbb{Z}^n$ by identifying each $\lambda_i$ with the standard basis vector $e_i$ and proceed as follows.  Using the Smith normal form, there exist $P, Q \in \operatorname{GL}_n\left(\mathbb{Z}\right)$ and an $n \times n$ integral diagonal matrix $D$ such that $PCQ = D$.  Note that the choices of $P$ and $Q$ are not canonical; our choices can be found in \cite[Appendix A]{dissertation}.  As $C$ is the change of basis matrix from the $\lambda_i$'s to the $\alpha_i$'s \cite[\S 13.1]{Humphreys}, we have $v \in \Lambda$ is an element of $\Lambda_r$ if and only if $Cw = v$ for some $w \in \mathbb{Z}^n$.  Since $Q \in \operatorname{GL}_n\left(\mathbb{Z}\right)$, this is equivalent to the existence of some $u \in \mathbb{Z}^n$ such that $CQu = P^{-1}Du = v$.  This equation provides conditions (via a system of linear equations) for determining the smallest (with respect to inclusion) $W$-sublattice of $\Lambda$ containing a given $v \in \Lambda$.  For each choice of $\Phi$ and $n$, the matrix $P^{-1}D$ is given in Table \ref{root system smith normal form table 1}.  From there, it is a standard fact (see, e.g., \cite[Lemma 10.4.B]{Humphreys}) that the $W$-orbit of any nonzero $v \in \Lambda$ has full rank.  Therefore, if $L \subseteq \Lambda$ is the smallest $W$-sublattice of $\Lambda$ containing $v$, then the lattice spanned by $Wv$ must be $L$.  We then use Lemma \ref{stabilizers in Weyl groups} and information about the Weyl groups of root systems (see, e.g., \cite[\S 12.2]{Humphreys}) to compute the sizes of these orbits.
\end{proof}

\begin{table}[h]
\centering
\caption[Matrices used in the proof of Theorem \ref{root systems theorem}.]{The $n \times n$ matrices $P^{-1}D$ used in the proof of Theorem \ref{root systems theorem}.  We show only the bottom row(s) of $P^{-1}D$; the remaining rows of $P^{-1}D$ are equal to those of the $n \times n$ identity matrix.}
\label{root system smith normal form table 1}
\begin{tabular}{|c|c|}
\hline
$\Phi$ & Bottom Row(s) of $P^{-1}D$ \\\hline 
& \\
$\mathsf{A}_n$, $n \geq 2$ &  
$\bmatrix
1 & 2 & 3 & \cdots & n - 1 & n + 1
\endbmatrix$ \\
& \\\hline
& \\
$\mathsf{B}_n$, $n \geq 2$ & 
$\bmatrix
0 & 0 & \cdots & 0 & 2
\endbmatrix$
\\
& \\\hline
& \\
$\mathsf{C}_n$, $n \geq 3$ odd &
$\bmatrix
1 & 0 & 1 & 0 & \cdots & 1 & 0 & 2
\endbmatrix$
\\
& \\\hline
& \\
$\mathsf{C}_n$, $n \geq 4$ even &
$\bmatrix
1 & 0 & 1 & 0 & 1 & 0 & \cdots & 1 & 0 & 0 & -2 \\
0 & 0 & 0 & 0 & 0 & 0 & \cdots & 0 & 0 & 1 & 2
\endbmatrix$ \\
& \\\hline
& \\
$\mathsf{D}_n$, $n \geq 4$ even &
$\bmatrix
1 & 0 & 1 & 0 & 1 & 0 & \cdots & 1 & 0 & 2 & 0\\
1 & 0 & 1 & 0 & 1 & 0 & \cdots & 1 & 0 & 0 & 2
\endbmatrix$
\\
& \\\hline
& \\
$\mathsf{D}_n$, $n \geq 5$ odd &
$\bmatrix
2 & 0 & 2 & 0 & \cdots & 2 & 1 & 4
\endbmatrix$
\\
& \\\hline
& \\
$\mathsf{E}_6$ & 
$\bmatrix 
1 & 0 & 2 & 0 & 1 & 3
\endbmatrix$
\\
& 
\\\hline
& \\
$\mathsf{E}_7$ & 
$\bmatrix 
0 & 1 & 0 & 0 & 1 & 0 & 2
\endbmatrix$
\\
& 
\\\hline
\end{tabular}
\end{table}

We now have the following immediate corollary of Theorem \ref{root systems theorem}.

\begin{cor} \label{lower bound on rdim n}
In Table \ref{lower bound on rdim n table}, we have lower bounds on $\operatorname{rdim}\left(n\right)$ for all $n \in \mathbb{Z}^+$, along with a $G$-lattice realizing each bound.
\begin{table}[h]
\caption[Lower bounds on $\operatorname{rdim}\left(n\right)$ and $\operatorname{symrank}_{\operatorname{irr}}\left(n\right)$ for all $n \in \mathbb{Z}^+$.]{Lower bounds on $\operatorname{rdim}\left(n\right)$ and $\operatorname{symrank}_{\operatorname{irr}}\left(n\right)$ for all $n \in \mathbb{Z}^+$, along with a $G$-lattice realizing each bound.  For a root system $\Phi$, the group $W\left(\Phi\right)$ is the Weyl group, $\Lambda_r$ is the root lattice, and $\Lambda$ is the weight lattice.}
\label{lower bound on rdim n table}
\begin{center}\begin{tabular}{|c|c|c|} \hline $n$ & $\operatorname{rdim}\left(n\right) \geq$ & $G$-Lattice \\\hline $1$ & $2$ & $\left(\Lambda_r, W\left(\mathsf{A}_1\right)\right)$ \\\hline $2$ & $6$ & $\left(\Lambda_r, W\left(\mathsf{G}_2\right)\right)$ \\\hline $3$ & $12$ & $\left(\Lambda_r, W\left(\mathsf{A}_3\right)\right)$ \\\hline $4$ & $24$ & $\left(\Lambda_r, W\left(\mathsf{C}_4\right)\right)$ \\\hline $5$ & $40$ & $\left(\Lambda_r, W\left(D_5\right)\right)$ \\\hline $6$ & $72$ & $\left(\Lambda_r, W\left(\mathsf{E}_6\right)\right)$ \\\hline $\geq 7$ & $2^n$ & $\left(\Lambda, W\left(\mathsf{B}_n\right)\right)$ \\\hline
\end{tabular}
\end{center}
\end{table}
\end{cor}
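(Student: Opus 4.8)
The plan is to obtain the corollary directly from Theorem~\ref{root systems theorem} by way of Corollary~\ref{symrank n = rdim n}, which identifies $\rdim\left(n\right)$ with the maximum of $\symrank\left(L, G\right)$ over all finite groups $G$ and all $G$-lattices $L$ of rank $n$. In particular, $\rdim\left(n\right) \geq \symrank\left(L, G\right)$ for any single such pair $\left(L, G\right)$, so it suffices to exhibit, for each $n$, one rank-$n$ $G$-lattice whose symmetric rank is the number claimed in Table~\ref{lower bound on rdim n table}.

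First I would check that every entry in the last column of Table~\ref{lower bound on rdim n table} is a $G$-lattice of rank exactly $n$: the Weyl group $W\left(\Phi\right)$ of an irreducible root system $\Phi$ is finite and acts on both the root lattice $\Lambda_r$ and the weight lattice $\Lambda$, each of which is a free $\mathbb{Z}$-module of rank $\operatorname{rank}\left(\Phi\right)$; and the root systems in the table, namely $\mathsf{A}_1, \mathsf{G}_2, \mathsf{A}_3, \mathsf{C}_4, \mathsf{D}_5, \mathsf{E}_6$ and $\mathsf{B}_n$, have ranks $1, 2, 3, 4, 5, 6$ and $n$. Then I would read the relevant values off Table~\ref{root system theorem table}: $\symrank\left(\Lambda_r, W\left(\mathsf{A}_1\right)\right) = 2$, $\symrank\left(\Lambda_r, W\left(\mathsf{G}_2\right)\right) = 6$, $\symrank\left(\Lambda_r, W\left(\mathsf{A}_3\right)\right) = 12$, $\symrank\left(\Lambda_r, W\left(\mathsf{C}_4\right)\right) = 24$, $\symrank\left(\Lambda_r, W\left(\mathsf{D}_5\right)\right) = 40$, $\symrank\left(\Lambda_r, W\left(\mathsf{E}_6\right)\right) = 72$, and $\symrank\left(\Lambda, W\left(\mathsf{B}_n\right)\right) = 2^n$ for all $n \geq 2$. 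Feeding each pair into Corollary~\ref{symrank n = rdim n} yields the stated bounds. Since the reflection representation of the Weyl group of an irreducible root system is an irreducible $\mathbb{Q}\left[W\right]$-module, each of these lattices is moreover an irreducible $W$-lattice, so by Lemma~\ref{symmetric rank facts} and Proposition~\ref{reduction of symrankirr} the same values are lower bounds for $\operatorname{rdim}_{\operatorname{irr}}\left(n\right)$ as well.

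To see that for each $n$ the chosen lattice in fact realizes the largest symmetric rank available among the lattices of Table~\ref{root system theorem table} of that rank, I would finally compare the chosen value against the other entries for root systems of rank $n$. For $n \leq 6$ this is a short finite check over the classical types $\mathsf{A}_n, \mathsf{B}_n, \mathsf{C}_n, \mathsf{D}_n$ and the relevant exceptional types $\mathsf{G}_2, \mathsf{F}_4, \mathsf{E}_6$. For $n \geq 7$ one verifies that $2^n$ exceeds $n\left(n+1\right)$, $\binom{n+1}{\lfloor (n+1)/2\rfloor}$, $2n\left(n-1\right)$, $2^{n-1}$, and the two isolated values $126$ (from $\mathsf{E}_7$) and $240$ (from $\mathsf{E}_8$); the only one requiring an argument is $\binom{n+1}{\lfloor (n+1)/2\rfloor} < 2^n$, which follows by induction from $\binom{n+2}{\lfloor (n+2)/2\rfloor} < 2\binom{n+1}{\lfloor (n+1)/2\rfloor}$ together with the base case $\binom{8}{4} = 70 < 128$. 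I do not anticipate a genuine obstacle: the corollary is essentially immediate once Theorem~\ref{root systems theorem} is available, and the only content beyond bookkeeping is this handful of elementary inequalities.
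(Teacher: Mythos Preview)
Your proposal is correct and follows the same approach as the paper, which simply declares the result an immediate corollary of Theorem~\ref{root systems theorem}. You spell out more detail than the paper does (including the optimality check in your final paragraph, which the statement does not actually require), but the underlying argument is identical.
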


We now show that these bounds are realized over any number field.  

\begin{thm} \label{realization over number fields}
For any number field $K$ and any $n \in \mathbb{Z}^+$, there exists an algebraic torus $T$ over $K$ of dimension $n$ with $\operatorname{rdim}\left(T\right)$ equal to the bound given in Corollary \ref{lower bound on rdim n}.
\end{thm}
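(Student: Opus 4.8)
The plan is to produce, for each $n$, the algebraic torus whose character lattice is precisely the $W$-lattice recorded in Corollary \ref{lower bound on rdim n}, with the absolute Galois group acting through a surjection onto the relevant Weyl group. Fix a number field $K$ with absolute Galois group $\mathscr{G}$, fix $n$, and let $\Phi$, $W = W\left(\Phi\right)$, and $L \in \left\lbrace \Lambda_r, \Lambda\right\rbrace$ be as in the corresponding row of Table \ref{lower bound on rdim n table}, so that Theorem \ref{root systems theorem} gives $\operatorname{symrank}\left(L, W\right) = b\left(n\right)$, where $b\left(n\right)$ denotes the bound in that row. Suppose we have a continuous surjection $\rho \colon \mathscr{G} \twoheadrightarrow W$. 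Then composing $\rho$ with the inclusion $W \hookrightarrow \GL\left(L\right)$ gives a continuous $\mathscr{G}$-action on $L$ — continuity is automatic, since it factors through a finite quotient $\operatorname{Gal}\left(K'/K\right)$, cf. Proposition \ref{absolute galois group acts as finite group} — so by the antiequivalence of Theorem \ref{algebraic tori correspondence} there is an algebraic torus $T$ over $K$ with $X_T \cong L$ as $\mathscr{G}$-lattices, and $\dim\left(T\right) = \operatorname{rank}\left(L\right) = n$. Since the action factors through $W$ and $\rho$ is onto, a subset of $L$ is $\mathscr{G}$-stable if and only if it is $W$-stable, so $\operatorname{symrank}\left(X_T, \mathscr{G}\right) = \operatorname{symrank}\left(L, W\right) = b\left(n\right)$; Theorem \ref{symmetric rank theorem} then yields $\operatorname{rdim}\left(T\right) = b\left(n\right)$, as required. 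Hence everything reduces to realizing each Weyl group occurring in Table \ref{lower bound on rdim n table} as the Galois group of a finite extension of $K$.

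For that I would use Chevalley's theorem. Each $W = W\left(\Phi\right)$ acting on $L$ lies in $\GL_n\left(\bbZ\right)$, hence acts faithfully and $K$-linearly on $K^n$; as $\cha\left(K\right) = 0$ does not divide $\left|W\right|$, the invariant ring $K\left[x_1, \dots, x_n\right]^W$ is a polynomial ring by Chevalley's theorem, so the invariant field $K\left(x_1, \dots, x_n\right)^W = K\left(f_1, \dots, f_n\right)$ is purely transcendental over $K$ — equivalently, the linear Noether problem has a positive answer for Weyl groups. Thus $K\left(x_1, \dots, x_n\right)/K\left(f_1, \dots, f_n\right)$ is a $K$-regular Galois extension with group $W$; because number fields are Hilbertian, Hilbert's irreducibility theorem produces a specialization giving a finite extension $K'/K$ with $\operatorname{Gal}\left(K'/K\right) \cong W$, and inflation along $\mathscr{G} \twoheadrightarrow \operatorname{Gal}\left(K'/K\right)$ supplies the required $\rho$. (For the solvable groups $W\left(\mathsf{A}_1\right)$, $W\left(\mathsf{G}_2\right)$, $W\left(\mathsf{A}_3\right)$, $W\left(\mathsf{C}_4\right)$ one could instead invoke Shafarevich's theorem, but the argument above is uniform in $n$ and also handles the non-solvable cases $W\left(\mathsf{D}_5\right)$, $W\left(\mathsf{E}_6\right)$, and $W\left(\mathsf{B}_n\right)$ for $n \geq 7$.)

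The one delicate point — and the main obstacle — is that $\rho$ must be \emph{surjective} onto $W$: a homomorphism with proper image $H \subsetneq W$ would only yield a torus with $\operatorname{rdim} = \operatorname{symrank}\left(L, H\right) \geq \operatorname{symrank}\left(L, W\right)$ (Lemma \ref{symmetric rank facts}(c)), possibly a strict inequality, and hence of the wrong representation dimension. Surjectivity is exactly what Hilbertianity buys us: the generic fibre of the $K$-regular cover has full Galois group $W$, and this is retained for all specializations outside a thin set — this is the step that genuinely uses that $K$ is a number field rather than an arbitrary field. The remaining verifications are routine: the lattice $L$ drawn from Table \ref{lower bound on rdim n table} is a full-rank $W$-sublattice of a copy of $\bbZ^n$ with $W \leq \GL_n\left(\bbZ\right)$, so the discussion above applies verbatim; and one may observe, as a bonus, that Hilbert irreducibility provides infinitely many linearly disjoint choices of $K'$, so in fact $b\left(n\right)$ is attained over $K$ by infinitely many pairwise non-isomorphic tori.
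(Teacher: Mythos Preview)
Your argument is correct and follows the same overall architecture as the paper --- reduce to realizing the relevant Weyl group $W$ as a Galois group over $K$, then invoke Hilbertianity of number fields --- but you obtain the regular realization of $W$ by a genuinely different route. The paper quotes a result of Nuzhin that $W\left(\Phi\right)$ occurs as the Galois group of a regular extension of $\mathbb{Q}\left(t\right)$ for every irreducible $\Phi \neq \mathsf{F}_4$, then base-changes to $K\left(t\right)$ and specializes. You instead exploit the Chevalley--Shephard--Todd theorem: since each $W$ in Table~\ref{lower bound on rdim n table} acts as a reflection group on $L \otimes_{\mathbb{Z}} K$, the invariant ring is polynomial, so $K\left(x_1,\dots,x_n\right)^W$ is purely transcendental over $K$, giving a $K$-regular $W$-cover of a rational base directly over $K$. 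Your route is more self-contained and classical --- it avoids an external citation and works uniformly in characteristic coprime to $\left|W\right|$ --- while the paper's has the mild cosmetic advantage of needing only one transcendental parameter. You also spell out more explicitly than the paper does the passage from a surjection $\mathscr{G} \twoheadrightarrow W$ to a torus with $\operatorname{rdim}\left(T\right) = \operatorname{symrank}\left(L,W\right)$, including the point that surjectivity is essential (a proper image would only give an inequality via Lemma~\ref{symmetric rank facts}(c)); this is left implicit in the paper but is worth stating.
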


\begin{proof}
The key idea is the following result of \cite{Nuzhin}:  For all irreducible root systems $\Phi$ except $\mathsf{F}_4$, there exists a regular Galois extension of the function field $\mathbb{Q}\left(t\right)$ (where $t$ is an indeterminate) with Galois group isomorphic to the Weyl group $W\left(\Phi\right)$.  As $W\left(\mathsf{F}_4\right)$ does not occur in Table \ref{lower bound on rdim n table}, we need not consider this case (although we note that $W\left(\mathsf{F}_4\right)$ is solvable and, hence, is realized as a Galois group over $\mathbb{Q}$ \cite[p. 171]{Nuzhin}).
\par
Now, let $K$ be a number field and let $W$ be the Weyl group of an irreducible root system besides $\mathsf{F}_4$.  By \cite[Cor. 3.2.4]{Jensen}, $K$ is Hilbertian.   Since $W$ is realizable as a Galois group over $\mathbb{Q}\left(t\right)$ by the paragraph above, Proposition 3.3.6 of \cite{Jensen} implies that $W$ is realizable as a Galois group over $K\left(t\right)$.  Result 3.3.4 of \cite{Jensen} now asserts that $W$ occurs as a Galois group over $K$, completing the proof.
\end{proof}

\section{Irreducible $G$-Lattices of Rank $n \in \left\lbrace 1, 2, \dots, 10, 11, 13, 17, 19, 23\right\rbrace$} \label{small dimensions chapter}

Now, we compute $\operatorname{rdim}_{\operatorname{irr}}\left(n\right)$ for $n \in \left\lbrace 1, 2, \dots, 10, 11, 13, 17, 19, 23\right\rbrace$.  In particular, we will show that the lower bounds given in Corollary \ref{lower bound on rdim n} are sharp by finding upper bounds on the symmetric ranks of all irreducible $G$-lattices in these dimensions. These dimensions are special because of Theorem \ref{finite subgroups of GLn(Z)} below.  We use the common abbreviation i.m.f. for ``irreducible, maximal, finite."

\begin{thm} \cite{Voskresenskii} \cite{Tahara} \cite{Dade} \cite{Wondratschek} \cite{Ryskov} \cite{PleskenPohst1} \cite{PleskenPohst2} \cite{PleskenPohst3} \cite{PleskenPohst4} \cite{PleskenPohst5} \cite{Plesken} \cite{Souvignier} \label{finite subgroups of GLn(Z)}
Explicit representatives of the $\mathbb{Z}$-conjugacy classes of i.m.f. subgroups of $\operatorname{GL}_n\left(\mathbb{Z}\right)$ are known for $1 \leq n \leq 11$ and $n = 13, 17, 19, 23$.
\end{thm}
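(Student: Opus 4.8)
Strictly speaking, this statement is a synthesis of the classification results in the papers cited with it rather than a theorem with a single self-contained proof, so what follows is less a proof than a roadmap through those references, organized around the method they have in common.

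The uniform strategy is to translate the problem into one about lattices and their automorphism groups. Any finite subgroup $G \le \operatorname{GL}_n(\mathbb{Z})$ fixes a positive definite symmetric bilinear form (average any inner product over $G$), so $\mathbb{Z}^n$ equipped with that form is an integral lattice on which $G$ acts by isometries; a \emph{maximal} finite $G$ then coincides with the full isometry group of that lattice (a Bravais group), and conversely the automorphism groups of rank-$n$ integral lattices furnish all candidates. By the Jordan--Zassenhaus Theorem there are only finitely many up to $\operatorname{GL}_n(\mathbb{Z})$-conjugacy, and one must isolate the irreducible maximal ones. Plesken and Pohst make this effective: the space of $G$-invariant forms is computable, the automorphism group of a given positive definite form is computable by a backtrack search over vectors of small norm, and the global search is organized by the sublattices spanned by the minimal vectors, which are built out of the irreducible root lattices of Cartan types $\mathsf{A}$--$\mathsf{G}$ --- yielding a finite supply of ``seed'' configurations to extend to maximality.

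What keeps the search finite, and genuinely short in prime dimension, is the representation theory. Restricting to $\mathbb{Q}$-irreducible $G$, the commuting algebra $D = \operatorname{End}_{\mathbb{Q}[G]}(\mathbb{Q}^n)$ is a division algebra, and one divides into the ``rational'' case $D = \mathbb{Q}$, the ``cyclotomic'' case with $D$ a larger field, and the ``quaternionic'' case with $D$ noncommutative; the numerical constraint relating $n$, the Schur index, the degree of an absolutely irreducible constituent, and $[Z(D):\mathbb{Q}]$ then cuts the options down. For $n = p$ prime this leaves essentially two situations: either the representation is already absolutely irreducible over $\mathbb{Q}$, in which case --- after discarding a short, explicitly excludable list of exceptional reflection-type groups --- the i.m.f.\ group must come from a rank-$p$ root system, hence from $\mathsf{A}_p$, $\mathsf{B}_p = \mathsf{C}_p$, or $\mathsf{D}_p$; or there is a one-dimensional absolutely irreducible constituent defined over an abelian number field of degree $p$, handled via the orders in that field and their normalizers. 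This is exactly the content of the prime-degree classification, valid for the primes $p \in \{13, 17, 19, 23\}$ in the statement (the primes $p \le 11$ being already subsumed in the general $n \le 11$ work).

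It remains to assemble the attributions: $n = 1$ is trivial; $n \le 3$ and the four-dimensional Bravais groups go back to the classical crystallographic classification and to Dade \cite{Dade} (see also \cite{Voskresenskii}), with refinements of Tahara, Wondratschek and Ry\v{s}kov \cite{Tahara, Wondratschek, Ryskov}; the principal engine for $5 \le n \le 9$ is the Plesken--Pohst series \cite{PleskenPohst1, PleskenPohst2, PleskenPohst3, PleskenPohst4, PleskenPohst5}; $n = 10$ is completed by Souvignier \cite{Souvignier}; and the prime dimensions $n = 11, 13, 17, 19, 23$ are handled by the prime-degree classification of Plesken \cite{Plesken}. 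The main obstacle --- and the reason the list is exactly this one --- is computational: for composite $n$ beyond this range the number of candidate Bravais lattices, and the cost of the automorphism-group and saturation computations, become prohibitive, while for a large prime $p$ there is no uniform argument excluding an unexpected rational-type i.m.f.\ group, so each further prime would demand its own, ever heavier, case analysis.
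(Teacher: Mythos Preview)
The paper does not prove this statement at all; it is stated purely as a citation of known classification results from the listed references, with no accompanying argument or even sketch. You correctly identify this at the outset and frame your contribution as a roadmap rather than a proof, which is the appropriate stance.

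Your summary of the common methodology (invariant positive definite forms, Bravais groups, root-lattice seeds, the commuting-algebra trichotomy in prime degree) is accurate and goes well beyond what the paper itself offers, so in that sense there is nothing to compare: the paper's ``proof'' is empty and yours is an informative gloss on the cited literature. The attributions you give are broadly right, though the precise division of labor among the references is slightly more tangled than your clean partition suggests (for instance, which dimensions Souvignier versus Plesken versus the later Plesken--Pohst installments cover overlaps somewhat), and your closing remark about why the list stops where it does is reasonable speculation rather than something established in those papers. None of this constitutes a gap, since the statement is not one the paper attempts to justify on its own.
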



Representatives from Theorem \ref{finite subgroups of GLn(Z)} are stored in the computer algebra system GAP \cite{GAP}, along with other information about the groups.  Each group is identified via an ordered triple $\left(d, q, z\right)$, where $d$ is the dimension, $q$ is the numerical label of the $\operatorname{GL}_n\left(\mathbb{Q}\right)$-class, and $z$ is the numerical label of the $\operatorname{GL}_n\left(\mathbb{Z}\right)$-class within the $\operatorname{GL}_n\left(\mathbb{Q}\right)$-class.  For more details, please see 50.7 of the online GAP manual.
\par
We have the following useful characterization of all i.m.f. subgroups of $\operatorname{GL}_n\left(\mathbb{Z}\right)$.

\begin{prop} \cite[p. 536]{PleskenPohst1} \label{imf subgroups fix quadratic forms}
For every i.m.f. subgroup $G < \operatorname{GL}_n\left(\mathbb{Z}\right)$, there exists a (not necessarily unique) positive definite $n \times n$ integral matrix $X$ such that $G$ is the full group of $\mathbb{Z}$-automorphisms of $X$, i.e., $G = \operatorname{Aut}_{\mathbb{Z}}\left(X\right) = \left\lbrace M \in \operatorname{GL}_n\left(\mathbb{Z}\right) \ | \ M^TXM = X\right\rbrace$.
\end{prop}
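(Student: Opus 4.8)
The plan is to use the standard averaging construction to manufacture a $G$-invariant positive definite integral form, to observe that the full automorphism group of such a form is automatically finite, and then to invoke the maximality of $G$ to force the two groups to coincide.

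First I would build the form. Since $G$ is finite, set
\[
X := \sum_{M \in G} M^{T} M.
\]
This is a symmetric integral matrix, and it is positive definite, since for $v \neq 0$ we have $v^{T} X v = \sum_{M \in G} \| M v \|^{2} \geq \| v \|^{2} > 0$. Moreover, for any $N \in G$ the assignment $M \mapsto M N$ permutes $G$, so $N^{T} X N = \sum_{M \in G} (M N)^{T}(M N) = \sum_{M \in G} M^{T} M = X$; hence $G \subseteq \operatorname{Aut}_{\mathbb{Z}}(X)$.

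Next I would check that $H := \operatorname{Aut}_{\mathbb{Z}}(X)$ is itself a finite subgroup of $\operatorname{GL}_n(\mathbb{Z})$. Because $X$ is positive definite, the set $\{ v \in \mathbb{Z}^n \mid v^{T} X v \leq c \}$ is finite for every real $c > 0$. Any element of $H$ preserves the form $v \mapsto v^{T} X v$, hence permutes the finitely many lattice vectors of norm at most $\max_{1 \leq i \leq n} e_i^{T} X e_i$, and it is completely determined by where it sends the standard basis vectors $e_1, \dots, e_n$; therefore $H$ is finite. (Equivalently, $H$ is a discrete subgroup of the compact orthogonal group of $X$ over $\mathbb{R}$.)

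Finally, $H$ is irreducible, since any $H$-invariant subspace of $\mathbb{Q}^n$ is a fortiori $G$-invariant and $G$ is irreducible. Thus $H$ is an irreducible finite subgroup of $\operatorname{GL}_n(\mathbb{Z})$ containing $G$, and the maximality of $G$ among finite subgroups yields $G = H = \operatorname{Aut}_{\mathbb{Z}}(X)$, as claimed. There is no genuinely hard step here: the only point requiring a (well-known) argument is the finiteness of $\operatorname{Aut}_{\mathbb{Z}}(X)$, and the irreducibility hypothesis enters only to ensure that the chain $G \subseteq \operatorname{Aut}_{\mathbb{Z}}(X)$ stays within the class of groups to which i.m.f.-maximality applies.
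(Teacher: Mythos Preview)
The paper does not give its own proof of this proposition; it simply cites \cite[p.~536]{PleskenPohst1}. Your argument is correct and is exactly the standard one found there: average the identity form over $G$ to obtain a $G$-invariant positive definite integral form $X$, note that $\operatorname{Aut}_{\mathbb{Z}}(X)$ is finite, and use maximality to close the gap.

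One small remark on your final comment: in this paper ``maximal'' means maximal with respect to inclusion among \emph{all} finite subgroups of $\operatorname{GL}_n(\mathbb{Z})$ (see the phrasing in Corollary~\ref{rdimn = max symrank of Z^n}), not merely among irreducible ones. So once you know $H=\operatorname{Aut}_{\mathbb{Z}}(X)$ is finite and contains $G$, the equality $G=H$ follows immediately, and the irreducibility hypothesis on $G$ is not actually used anywhere in the proof. Your verification that $H$ is irreducible is correct but superfluous.
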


Along with other data about the group, GAP stores such a matrix $X$ for each i.m.f. subgroup.  

\begin{defn} \label{norm and theta series def}
Given a positive definite $n \times n$ integral matrix $X$ and a column vector $v \in \mathbb{Z}^n$, we call the integer $v^TXv$ the \textbf{norm} of $v$ under $X$.  The \textbf{$\Theta$-series} of $X$ is the power series $\Theta_X\left(q\right) = \sum_{i=0}^{\infty} N_iq^i$ with coefficients given by
$$N_i := \left|\left\lbrace v \in \mathbb{Z}^n \ | \ v^TXv = i\right\rbrace\right|.$$
\end{defn}

We now obtain a bound on $\operatorname{symrank}\left(\mathbb{Z}^n, G\right)$ in terms of the $\Theta$-series coefficients.

\begin{lem} \label{diagonal norms form a generating set}
Let $G < \operatorname{GL}_n\left(\mathbb{Z}\right)$ be an i.m.f. subgroup and suppose that $G = \operatorname{Aut}_{\mathbb{Z}}\left(X\right)$ for the positive definite $n \times n$ integral matrix $X$.  Write $D = \left\lbrace X_{i,i} \ | \ 1 \leq i \leq n\right\rbrace$ for the set of diagonal entries of $X$ and $\Theta_X\left(q\right) = \sum_{i=0}^{\infty} N_iq^i$ for the $\Theta$-series of $X$.  Then,
$$\operatorname{symrank}\left(\mathbb{Z}^n, G\right) \leq \sum_{i \in D} N_i.$$
\end{lem}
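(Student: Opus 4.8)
The plan is to produce an explicit $G$-stable generating set of $\mathbb{Z}^n$ of size at most $\sum_{i \in D} N_i$; the bound on $\operatorname{symrank}\left(\mathbb{Z}^n, G\right)$ then follows immediately from Definition~\ref{symmetric rank def}.

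First I would take the standard basis $e_1, \dots, e_n$ of $\mathbb{Z}^n$, which is a generating set, and form $S := \bigcup_{j=1}^n G e_j$. Exactly as in the proof of Lemma~\ref{symmetric rank facts}(a), the set $S$ is a $G$-stable generating set of $\mathbb{Z}^n$, so $\operatorname{symrank}\left(\mathbb{Z}^n, G\right) \leq \left|S\right|$, and it remains only to bound $\left|S\right|$. The key point is that since $G = \operatorname{Aut}_{\mathbb{Z}}\left(X\right)$, every $M \in G$ preserves the norm form: $\left(Mv\right)^T X \left(Mv\right) = v^T \left(M^T X M\right) v = v^T X v$ for all $v \in \mathbb{Z}^n$. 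Hence the orbit $G e_j$ is contained in the norm level set $L_j := \left\lbrace v \in \mathbb{Z}^n \ | \ v^T X v = X_{j,j}\right\rbrace$, because $e_j$ has norm $X_{j,j}$. Since $X_{j,j} \in D$ for each $j$, this gives $S \subseteq \bigcup_{i \in D}\left\lbrace v \in \mathbb{Z}^n \ | \ v^T X v = i\right\rbrace$.

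Finally, the sets $\left\lbrace v \in \mathbb{Z}^n \ | \ v^T X v = i\right\rbrace$ for distinct values of $i$ are pairwise disjoint, and each has cardinality $N_i$ by Definition~\ref{norm and theta series def}; summing over the \emph{set} $D$ (so that a norm value shared by several diagonal entries is counted once) yields $\left|S\right| \leq \sum_{i \in D} N_i$, and therefore $\operatorname{symrank}\left(\mathbb{Z}^n, G\right) \leq \sum_{i \in D} N_i$. This argument is essentially bookkeeping and I do not anticipate a substantive obstacle; the only points requiring a little care are the observation that $G$ preserves the norm form (which is precisely the defining property of $\operatorname{Aut}_{\mathbb{Z}}\left(X\right)$) and the convention that $D$ be treated as a set rather than a multiset, so that passing from the union bound to $\sum_{i \in D} N_i$ does not overcount.
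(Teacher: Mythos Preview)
Your proof is correct and follows essentially the same approach as the paper: the paper simply takes the set $\left\lbrace v \in \mathbb{Z}^n \ | \ v^T X v \in D\right\rbrace$ directly as the $G$-stable generating set (noting it is $G$-stable and contains the standard basis vectors), whereas you take the union of $G$-orbits of the standard basis vectors and observe that this union is contained in that set. Both arguments rest on the same two observations---that $e_j$ has norm $X_{j,j}$ and that $G$ preserves the norm form---so the difference is purely cosmetic.
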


\begin{proof}
Observe that the set $\left\lbrace v \in \mathbb{Z}^n \ | \ v^TXv \in D\right\rbrace$ is $G$-stable and contains the standard basis vectors for $\mathbb{Z}^n$.
\end{proof}

Although the bound given in Lemma \ref{diagonal norms form a generating set} is rather coarse in general, in most cases it is sufficient to prove the sharpness of the claimed bounds in dimensions $1, 2, \dots, 10, 11, 13, 17, 19, 23$.

\begin{thm} \label{low dimensions theorem}
In Table \ref{low dimensions theorem}, we have exact values of $\operatorname{rdim}_{\operatorname{irr}}\left(n\right)$, along with a $G$-lattice realizing each value.
\begin{table}[h]
\centering
\caption{Exact values of $\operatorname{rdim}_{\operatorname{irr}}\left(n\right)$ for dimensions $n \in \left\lbrace 1, 2, \dots, 10, 11, 13, 17, 19, 23\right\rbrace$.}
\label{low dimensions theorem table}
\begin{tabular}{|c|c|c|} \hline \textbf{$n$} & \textbf{$\operatorname{rdim}_{\operatorname{irr}}\left(n\right)$} & $G$-Lattice \\\hline $1$ & $2$ & $\left(\Lambda_r, W\left(\mathsf{A}_1\right)\right)$ \\\hline $2$ & $6$ & $\left(\Lambda_r, W\left(\mathsf{G}_2\right)\right)$ \\\hline $3$ & $12$ & $\left(\Lambda_r, W\left(\mathsf{A}_3\right)\right)$ \\\hline $4$ & $24$ & $\left(\Lambda_r, W\left(\mathsf{C}_4\right)\right)$ \\\hline $5$ & $40$ & $\left(\Lambda_r, W\left(\mathsf{D}_5\right)\right)$ \\\hline $6$ & $72$ & $\left(\Lambda_r, W\left(\mathsf{E}_6\right)\right)$ \\\hline $7, 8, 9, 10, 11, 13, 17, 19, 23$ & $2^n$ & $\left(\Lambda, W\left(\mathsf{B}_n\right)\right)$ \\\hline
\end{tabular}
\end{table}
\end{thm}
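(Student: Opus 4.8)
The plan is to prove each entry of Table~\ref{low dimensions theorem table} by establishing two matching inequalities. For the lower bound, observe that the realizing $W$-lattices in Corollary~\ref{lower bound on rdim n} are \emph{irreducible} $W$-lattices, since the Weyl group of an irreducible root system acts irreducibly on the ambient Euclidean space; hence, realizing each such $W$ as a Galois group over a suitable field and invoking Theorem~\ref{algebraic tori correspondence} and Theorem~\ref{symmetric rank theorem}, the associated torus is irreducible with representation dimension equal to the tabulated value, so $\operatorname{rdim}_{\operatorname{irr}}(n)$ is at least that value. It remains to prove the reverse inequality.

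For the upper bound I would apply Proposition~\ref{reduction of symrankirr} to reduce to showing that $\operatorname{symrank}(\mathbb{Z}^n, G)$ is at most the tabulated value for one representative $G$ of each $\operatorname{GL}_n(\mathbb{Z})$-conjugacy class of i.m.f.\ subgroups of $\operatorname{GL}_n(\mathbb{Z})$. For $n \in \{1, 2, \dots, 10, 11, 13, 17, 19, 23\}$ these classes are completely and explicitly known by Theorem~\ref{finite subgroups of GLn(Z)} and are stored, together with invariant forms, in GAP. For each such $G$, take the stored positive definite integral form $X$ with $G = \operatorname{Aut}_{\mathbb{Z}}(X)$ furnished by Proposition~\ref{imf subgroups fix quadratic forms}, compute the finitely many coefficients $N_i$ of $\Theta_X$ with $i \leq \max_j X_{j,j}$ (only finitely many vectors have bounded norm since $X$ is positive definite), and invoke Lemma~\ref{diagonal norms form a generating set} to get $\operatorname{symrank}(\mathbb{Z}^n, G) \leq \sum_{i \in D} N_i$. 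Whenever this sum is at most the tabulated value, that group is handled; I expect this to dispose of the large majority of the groups, leaving only a short list of exceptions.

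For each exceptional group --- one for which the coarse estimate of Lemma~\ref{diagonal norms form a generating set} overshoots --- I would produce a genuinely smaller $G$-stable generating set of $\mathbb{Z}^n$ by one of two means: either (i) replace $X$ by another $G$-invariant positive definite form, chosen (when the lattice is spanned by its shortest vectors) so that a basis of minimal vectors makes all diagonal entries equal, which shrinks $D$, and hence $\sum_{i \in D} N_i$, to a single kissing number; or (ii) directly take the $G$-orbit $Gv$ of a single short vector $v$, compute $|Gv| = [G : \operatorname{Stab}_G(v)]$ by orbit--stabilizer, and verify that $Gv$ generates $\mathbb{Z}^n$ --- it automatically spans $\mathbb{Q}^n$ because $G$ is irreducible and $v \neq 0$, and generation over $\mathbb{Z}$ is a finite computation (for instance via the Smith normal form of the matrix whose rows are the orbit vectors), taking a union of two or three well-chosen short orbits if no single orbit generates. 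In each case the resulting set has size at most the tabulated value, and combined with the lower bound this gives the claimed equality.

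The main obstacle is precisely this last step: there is no uniform recipe across the exceptional groups, so each must be analyzed individually, with a choice of invariant form or of generating vectors tailored to that particular lattice, and the bookkeeping is heaviest in dimensions $8$, $9$, and $10$, where the i.m.f.\ groups are most numerous. Throughout, the argument depends on the completeness of the classification recorded in Theorem~\ref{finite subgroups of GLn(Z)}.
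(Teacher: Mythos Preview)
Your proposal is correct and follows essentially the same architecture as the paper: lower bound from Corollary~\ref{lower bound on rdim n}, upper bound via Proposition~\ref{reduction of symrankirr} and Lemma~\ref{diagonal norms form a generating set} applied to the GAP list, then ad~hoc treatment of the leftover groups.

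The one substantive difference is in how the exceptions are dispatched. You propose two generic computational recipes (swap the invariant form, or compute an explicit short orbit and check it generates). The paper instead observes that almost all of the exceptional groups---namely $(d,1,3)$ for $5\le d\le 10$, $(19,1,2)$, and $(23,2,3)$---are isomorphic to $C_2\times W(\mathsf{D}_n)$, so Theorem~\ref{root systems theorem} immediately gives $\operatorname{symrank}\le 2\cdot 2^{n-1}=2^n$ with no further computation. Only a single group, $(23,3,2)$, requires the kind of direct orbit calculation you describe in your method~(ii), and the paper handles it exactly that way. So your plan works, but recognizing the $C_2\times W(\mathsf{D}_n)$ pattern collapses most of the exceptional casework into a one-line appeal to the root-system table; your expectation that the hard residual cases cluster in dimensions $8$--$10$ is also off, as the exceptions in fact appear in every dimension from $5$ through $10$ and again at $19$ and $23$.
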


\begin{proof}
By Corollary \ref{lower bound on rdim n}, we need only to show that the values in Theorem \ref{low dimensions theorem} are upper bounds on $\operatorname{rdim}_{\operatorname{irr}}\left(n\right)$ for the given values of $n$.  To do this, we use a computer algebra system to apply Lemma \ref{diagonal norms form a generating set} to the representative from each $\operatorname{GL}_n\left(\mathbb{Z}\right)$-conjugacy class of i.m.f. subgroups of $\operatorname{GL}_n\left(\mathbb{Z}\right)$ stored in GAP; see \cite[Appendix B]{dissertation} for our code and the output.  We now address the groups for which Lemma \ref{diagonal norms form a generating set} does not yield the claimed bound.  
\par
The groups given by the following ordered triples are all isomorphic to $C_2 \times W\left(\mathsf{D}_n\right)$, where $C_2$ is the cyclic group of order $2$:  $\left(d, 1, 3\right)$ for $5 \leq d \leq 10$, $\left(19, 1, 2\right)$, and $\left(23, 2, 3\right)$.  In each of these cases, by Theorem \ref{root systems theorem}, we have $\operatorname{symrank}\left(\mathbb{Z}^n, G\right) \leq 2 \cdot 2^{n-1} = 2^n$.
\par
This leaves only the $23$-dimensional group $G$ given by the ordered triple $\left(23, 3, 2\right)$ to consider.  For this case, we first obtain that $G = \left\langle M_1, M_2\right\rangle$ for $M_1, M_2 \in \operatorname{GL}_{23}\left(\mathbb{Z}\right)$.  (See \cite[p. 62-63]{dissertation} for $M_1$ and $M_2$.)  The minimal (nonzero) norm under the corresponding matrix $X$ is $4$, and there are $93150$ vectors in $\mathbb{Z}^{23}$ of norm $4$ under $X$, one of which is the standard basis vector $e_2 \in \mathbb{Z}^{23}$.  By computing the union of the orbits of $e_2$ under the subgroups $\left\langle M_1\right\rangle, \left\langle M_2\right\rangle$, and $\left\langle M_1M_2\right\rangle$, we see that the orbit $Ge_2$ spans $\mathbb{Z}^{23}$ as a lattice.  As $Ge_2$ is a subset of the set of elements of norm $4$ under $X$, it follows that $\operatorname{symrank}\left(\mathbb{Z}^{23}, G\right) \leq 93150 < 2^{23}$, as desired.
\end{proof}

\begin{rem} \label{Q-class remark}
For all dimensions $1 \leq n \leq 31$, $\mathbb{Q}$-class representatives of i.m.f. subgroups of $\operatorname{GL}_n\left(\mathbb{Z}\right)$ are known and stored in GAP.  However, since $\mathbb{Q}$-equivalent $G$-lattices can have different symmetric ranks, the techniques applied in this section cannot (at present) be directly applied to those dimensions not addressed in Theorem \ref{low dimensions theorem}.
\end{rem}

\section{Irreducible $G$-Lattices of Prime Rank} \label{prime dimensions chapter}

In this section and the next, we study irreducible $G$-lattices in prime dimensions to prove Theorem \ref{asymptotic theorem} and (b) of Theorem \ref{symrankirr theorem 1}.  Prime dimensions are especially nice because of the following fact.

\begin{prop} \cite[\S 1]{PleskenPohst1} \label{irreducibility over Z and C}
If $\Delta:  G \to \operatorname{GL}_p\left(\mathbb{Z}\right)$ is an irreducible representation of a finite group $G$ of prime degree $p$, then $\Delta$ is irreducible over $\mathbb{Z}$ if and only if $\Delta$ is irreducible over $\mathbb{C}$.
\end{prop}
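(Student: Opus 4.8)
The plan is to recast both notions of irreducibility in the language of complex characters and Schur indices, after which the primality of $p$ does essentially all the work. I read ``$\Delta$ irreducible over $\mathbb{Z}$'' as saying that the $G$-lattice $\mathbb{Z}^p$ is irreducible in the sense of Definition~\ref{irreducible lattice def}, i.e.\ that the $\mathbb{Q}[G]$-module $\mathbb{Q}^p$ is irreducible. One implication is purely formal and uses nothing about $p$: if $\mathbb{C}^p$ is an irreducible $\mathbb{C}[G]$-module and $0 \ne W \subsetneq \mathbb{Q}^p$ were a $G$-invariant subspace, then $W \otimes_{\mathbb{Q}} \mathbb{C}$ would be a nonzero proper $G$-invariant subspace of $\mathbb{C}^p$, a contradiction; thus irreducibility over $\mathbb{C}$ forces irreducibility over $\mathbb{Q}$. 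The content of the proposition is the converse.

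For the converse I would take $\mathbb{Q}^p$ to be an irreducible $\mathbb{Q}[G]$-module with character $\chi$ and invoke the standard decomposition theory of rational representations (see, e.g., Curtis--Reiner, or Serre's \emph{Linear Representations of Finite Groups}, Ch.~12): over $\mathbb{C}$ one has $\chi = m\bigl(\psi_1 + \cdots + \psi_t\bigr)$, where $\psi_1, \ldots, \psi_t$ are the distinct $\operatorname{Gal}(\mathbb{Q}(\psi)/\mathbb{Q})$-conjugates of an absolutely irreducible character $\psi = \psi_1$, where $t = [\mathbb{Q}(\psi):\mathbb{Q}]$, and where $m = m_{\mathbb{Q}}(\psi)$ is the Schur index. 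Comparing degrees gives $p = m\,t\,\psi(1)$, and since $p$ is prime with $m, t, \psi(1)$ positive integers, exactly one of the three factors equals $p$. It remains to eliminate the cases $m = p$ and $t = p$.

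If $m = p$ then $\psi$ is linear, hence afforded by a one-dimensional representation over its own field of values, so $m_{\mathbb{Q}}(\psi) = 1 \ne p$, a contradiction. If $t = p$ then $m = \psi(1) = 1$, so $\psi$ is a linear character with exactly $p$ Galois conjugates; writing $n$ for its order, so that $\mathbb{Q}(\psi) = \mathbb{Q}(\zeta_n)$, this reads $p = t = [\mathbb{Q}(\zeta_n):\mathbb{Q}] = \varphi(n)$. But $\varphi(n)$ is even for all $n \ge 3$ while $\varphi(1) = \varphi(2) = 1$, so no odd prime is a value of Euler's totient, and this case cannot occur for $p$ odd. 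Therefore $\psi(1) = p$ and $m = t = 1$, i.e.\ $\chi$ is absolutely irreducible, which is exactly the assertion that $\mathbb{C}^p$ is an irreducible $\mathbb{C}[G]$-module.

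I expect the case $t = p$ to be the only step needing any thought: one must see that a single Galois orbit of $p$ linear characters forces $\varphi(n) = p$, which is then killed by a one-line totient argument for odd $p$. Two scope remarks. First, the integrality hypothesis $\Delta(G) \subseteq \operatorname{GL}_p(\mathbb{Z})$ is not really used for odd $p$; the totient obstruction already rules out $\mathbb{C}$-reducibility for \emph{every} $\mathbb{Q}$-irreducible rational representation of odd prime degree. Second, the totient step genuinely needs $p$ odd: for $p = 2$ one has $\varphi(3) = \varphi(4) = \varphi(6) = 2$, and for instance the rational canonical form of $x^2 + x + 1$ generates an order-$3$ irreducible subgroup of $\operatorname{GL}_2(\mathbb{Z})$ that splits over $\mathbb{C}$, so $p = 2$ would require separate handling; this never arises in Section~\ref{prime dimensions chapter}, where $p$ is always an odd prime.
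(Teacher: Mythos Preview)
The paper does not prove this proposition; it merely cites Plesken--Pohst and moves on. So there is no argument in the paper to compare yours against.

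Your proof is correct and is essentially the standard one. The decomposition $\chi = m(\psi_1 + \cdots + \psi_t)$ with $p = m\,t\,\psi(1)$ is exactly the right tool, and your elimination of the two bad cases is clean: a linear character has Schur index~$1$, killing $m = p$; and $\varphi(n)$ is never an odd prime, killing $t = p$. That forces $\psi(1) = p$ and $m = t = 1$, i.e.\ absolute irreducibility.

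Your scope remarks are on point as well. The integrality of $\Delta$ is indeed irrelevant for odd $p$: the argument goes through for any $\mathbb{Q}$-irreducible rational representation of odd prime degree. And you are right that the proposition, as literally stated in the paper for \emph{all} primes $p$, is false at $p = 2$: your $C_3 \hookrightarrow \operatorname{GL}_2(\mathbb{Z})$ via the companion matrix of $x^2 + x + 1$ is $\mathbb{Q}$-irreducible but $\mathbb{C}$-reducible. This does not damage the paper, since immediately after the proposition the discussion restricts to odd primes (Proposition~\ref{Plesken prop} begins with ``Let $p > 2$ be a prime''), but it is worth flagging.
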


In odd prime dimensions $p$, the following proposition classifies irreducible finite subgroups $G < \operatorname{GL}_p\left(\mathbb{Z}\right)$ by their maximal normal 2-subgroups.  We first introduce the following notation for a finite subgroup $G < \operatorname{GL}_n\left(\mathbb{Z}\right)$:
\begin{itemize}
\item $G^+ := G \cap \operatorname{SL}_n\left(\mathbb{Z}\right)$.
\item We denote by $O_2\left(G\right)$ the maximal normal $2$-subgroup of $G$.
\end{itemize}

\begin{prop} \cite[Prop. II.2]{Plesken} \label{Plesken prop}
Let $p > 2$ be a prime integer, let $G < \operatorname{GL}_p\left(\mathbb{Z}\right)$ be an irreducible finite group, and let $d$ be the multiplicative order of $2$ modulo $p$.  Then, one of the following occurs:
\begin{enumerate}
\item $O_2\left(G^+\right)$ is elementary abelian of order $2^{d\ell}$ for some integer $1 \leq \ell \leq \frac{p-1}{d}$.  In this case, $G^+$ is conjugate under $\operatorname{GL}_p\left(\mathbb{Q}\right)$ to a group of integral monomial matrices (i.e., signed permutation matrices).  Moreover, $G^+$ contains an irreducible subgroup which is an extension of an elementary abelian $2$-group of order $2^d$ by a cyclic group of order $p$.
\item $O_2\left(G^+\right) = 1$.  In this case, $G^+$ has a unique minimal normal subgroup $N \neq 1$ (possibly $N = G^+$), and $N$ is nonabelian simple, the centralizer $C_{G^+}\left(N\right) = 1$, and $N$ is irreducible as a matrix group.
\end{enumerate}  
\end{prop}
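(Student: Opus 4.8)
\emph{Proof proposal.} The plan is to pass to $G^+ = G \cap \operatorname{SL}_p(\mathbb{Z})$ and analyze it with Clifford theory relative to a minimal normal subgroup, following the dichotomy $O_2(G^+) \neq 1$ versus $O_2(G^+) = 1$ of the statement. First I would check that $G^+$ is again irreducible: it is the kernel of $\det \colon G \to \{\pm 1\}$, hence normal of index $1$ or $2$, and restricting the irreducible $\mathbb{Q}[G]$-module $\mathbb{Q}^p$ to it yields either an irreducible module or a sum of two $G$-conjugate submodules of dimension $p/2$, the latter impossible since $p$ is odd. So $\mathbb{Z}^p$ is an irreducible $G^+$-lattice, hence irreducible over $\mathbb{C}$ by Proposition~\ref{irreducibility over Z and C}, and $G^+ \neq 1$; write $V = \mathbb{C}^p$ for this faithful irreducible representation.

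\emph{Case $O_2(G^+) = A \neq 1$.} Restricting $V$ to $A$ and using Clifford's theorem, the irreducible constituents are mutually $G^+$-conjugate of dimension a power of $2$ dividing $p$, hence linear, and they cannot all coincide, for otherwise $A$ would act by scalars, hence (being integral of determinant $1$, with $p$ odd) trivially, forcing $A = 1$. Thus $V|_A = \chi_1 \oplus \cdots \oplus \chi_p$ with the $\chi_i$ distinct linear characters forming a single $G^+$-orbit, and $\bigcap_i \ker \chi_i = 1$ by faithfulness. The crucial input is that $V$ is defined over $\mathbb{Q}$: the $\mathbb{Q}$-irreducible constituents of $V|_A$ are the Galois orbits among the $\chi_i$, they are transitively permuted by $G^+$ (Galois action commuting with conjugation), so have a common cardinality dividing $p$; the value $p$ is excluded since a Galois orbit of a character of a $2$-group has $2$-power size, so each $\chi_i$ is Galois-fixed, i.e. $\{\pm 1\}$-valued. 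Hence $A$ is elementary abelian, the $\chi_i$ span the $\mathbb{F}_2$-space $\widehat A$, diagonalizing $A$ over $\mathbb{Q}$ makes $G^+$ monomial, and replacing that basis by a $G^+$-stable integral lattice supported on the same lines makes $G^+$ a group of integral signed permutation matrices. The image of $G^+$ in $S_p$ (permuting the $p$ lines) is transitive, hence contains a $p$-cycle, which lifts to $c \in G^+$; a determinant computation shows $c^p = (\det c)I = I$, so $\langle c\rangle \cong C_p$ and $c$ permutes the $\chi_i$ cyclically, making $\widehat A$ a cyclic $\mathbb{F}_2[C_p]$-module. Decomposing via $x^p-1 = (x-1)\prod_j f_j$ over $\mathbb{F}_2$ with the $f_j$ irreducible of degree $d$ (there are $\tfrac{p-1}{d}$ of them), and noting the trivial summand cannot occur (it would force $d \mid 1$), gives $\widehat A \cong$ a sum of $\ell$ distinct $d$-dimensional simple modules, $1 \le \ell \le \tfrac{p-1}{d}$, i.e. $|A| = 2^{d\ell}$. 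Finally, choosing one $d$-dimensional summand $U \subseteq \widehat A$ and letting $A_1 \le A$ be the corresponding $c$-invariant direct factor of order $2^d$, the subgroup $E = A_1 \rtimes \langle c\rangle$ is an extension of an elementary abelian group of order $2^d$ by $C_p$; since every $\chi_i$ has nonzero $U$-component (their $\langle c\rangle$-span is all of $\widehat A$), $V|_{A_1}$ has no trivial constituent, and Clifford theory for $E$ then forces $V|_E$ irreducible.

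\emph{Case $O_2(G^+) = 1$.} Let $N$ be a minimal normal subgroup of $G^+$, so $N \cong S^k$ for a simple group $S$. If $S$ were abelian, $N$ would be an elementary abelian $q$-group; $q = 2$ is impossible since then $N \le O_2(G^+) = 1$, and for odd $q$ the Clifford analysis above shows $V|_N$ is a sum of $p$ distinct nontrivial linear characters whose Galois orbits each have size $[\mathbb{Q}(\zeta_q):\mathbb{Q}] = q-1$, so $(q-1)\mid p$, impossible for an odd prime $q$. Hence $S$ is nonabelian simple. Applying Clifford's theorem to $N$, the constituents of $V|_N$ have dimension dividing $p$; dimension $1$ would make them trivial and $V|_N$ unfaithful, so $V|_N$ is irreducible over $\mathbb{C}$ (hence over $\mathbb{Z}$, by Proposition~\ref{irreducibility over Z and C}). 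Writing an irreducible of $S^k$ as an external tensor product $\rho_1 \boxtimes \cdots \boxtimes \rho_k$ and using that $p$ is prime forces all but one $\rho_i$ to be trivial, so by faithfulness $k = 1$ and $N$ is nonabelian simple. Since $N$ acts irreducibly over $\mathbb{C}$, Schur's lemma gives $C_{\operatorname{GL}_p(\mathbb{C})}(N) = \mathbb{C}^{\times} I$, so $C_{G^+}(N)$ consists of integral scalars of determinant $1$, i.e. $C_{G^+}(N) = 1$; and a second minimal normal subgroup $N'$ would meet $N$ trivially, hence centralize it, hence lie in $C_{G^+}(N) = 1$, so $N$ is unique.

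I expect the main obstacle to be the rationality bookkeeping in the case $O_2(G^+) \neq 1$: making precise that $V$ descends to $\mathbb{Q}$, that the $\mathbb{Q}$-constituents of $V|_A$ are the $G^+$-permuted Galois orbits of the $\chi_i$, deducing elementary abelianness and the integral signed-permutation normal form, and then cleanly extracting and analyzing the subgroup $E$ from the $\mathbb{F}_2[C_p]$-module structure of $\widehat A$. By comparison the purely group-theoretic steps (Clifford's theorem, the tensor decomposition of irreducibles of $S^k$, and the Schur-lemma centralizer computation) are routine.
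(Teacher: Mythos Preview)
The paper does not prove this proposition at all; it is quoted verbatim from \cite{Plesken} and used as a black box. So there is no ``paper's own proof'' to compare against, and your write-up is really an independent reconstruction of Plesken's argument. With that in mind, your Clifford-theoretic outline is essentially correct and is indeed the standard route: reduce to $G^+$, apply Clifford's theorem to $V|_{O_2(G^+)}$ (resp.\ $V|_N$), use rationality to force the linear constituents to be $\{\pm 1\}$-valued, and then read off the monomial normal form and the $\mathbb{F}_2[C_p]$-module structure of $\widehat A$.

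There is one genuine gap. Your parenthetical ``the trivial summand cannot occur (it would force $d\mid 1$)'' is not a valid justification: at this point you have not yet established that $|A|$ has the shape $2^{d\ell}$, so you cannot invoke that shape to rule anything out. The correct reason the trivial $\mathbb{F}_2[C_p]$-summand is absent is the determinant constraint. In the diagonal basis, each $a\in A$ acts as $\operatorname{diag}(\chi_1(a),\dots,\chi_p(a))$, so $\prod_{i=1}^p \chi_i(a)=\det(a)=1$ because $A\le G^+\le\operatorname{SL}_p(\mathbb{Z})$. Thus $\sum_i \chi_i=0$ in $\widehat A$, i.e.\ the image of $1+c+\cdots+c^{p-1}$ vanishes, so the surjection $\mathbb{F}_2[C_p]\twoheadrightarrow\widehat A$ kills the trivial summand, giving $\widehat A\cong\bigoplus_{j\in S}\mathbb{F}_{2^d}$ and $|A|=2^{d\ell}$ with $1\le \ell\le (p-1)/d$ as required.

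A smaller point: in the irreducibility of $E=A_1\rtimes\langle c\rangle$ you jump from ``$V|_{A_1}$ has no trivial constituent'' to ``$V|_E$ is irreducible.'' The missing sentence is that the $p$ restricted characters $\chi_i|_{A_1}$ are pairwise distinct: they lie in the simple $\mathbb{F}_2[C_p]$-module $U\cong\mathbb{F}_{2^d}$, which has no nonzero $c$-fixed points, so the $C_p$-orbit $\{\chi_i|_{A_1}\}$ is free of size $p$. Then $V|_{A_1}$ is multiplicity-free with a single $C_p$-orbit of constituents, and Clifford gives $V|_E$ irreducible. Everything else (the $\mathbb{Q}$-descent, the integral monomial conjugacy via $\bigoplus_i(L_i\cap\mathbb{Z}^p)$, the $c^p=I$ computation, the tensor-factor argument for $N\cong S^k$, and the Schur-lemma centralizer step) is fine.
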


We will use Proposition \ref{Plesken prop} to consider two separate cases for irreducible finite subgroups of $\operatorname{GL}_p\left(\mathbb{Z}\right)$:  Those satisfying (a), which we focus on in this section; and those satisfying (b), which we will address in Section \ref{almost simple groups chapter}.
\par
We denote by $\operatorname{Mon}_n\left(\mathbb{Z}\right)$ the group of all $n \times n$ monomial matrices and by $\operatorname{D}_n\left(\mathbb{Z}\right)$ the subgroup of diagonal matrices.  Note that $\operatorname{D}_n\left(\mathbb{Z}\right) \cong C_2^n$; in particular, 
$$\operatorname{Mon}_n\left(\mathbb{Z}\right) \cong \operatorname{D}_n\left(\mathbb{Z}\right) \rtimes S_n \cong C_2^n \rtimes S_n,$$
where $C_2$ is the cyclic group of order $2$ and $S_n$ is the symmetric group on $n$ letters.
\par
Unlike in those dimensions addressed in Chapter \ref{small dimensions chapter}, in prime dimensions $p \geq 29$ we have only a description of the $\operatorname{GL}_p\left(\mathbb{Q}\right)$-conjugacy classes of irreducible, maximal, finite (i.m.f.) subgroups as opposed to the $\operatorname{GL}_p\left(\mathbb{Z}\right)$-conjugacy classes.  Thus, we use the following slightly different strategy in this section.

\begin{prop} \label{reduction to monomial matrices}
Let $G < \operatorname{GL}_p\left(\mathbb{Z}\right)$ be an irreducible finite group satisfying (a) of Proposition \ref{Plesken prop}.  Then, we have
$$\operatorname{symrank}\left(\mathbb{Z}^p, G\right) \leq \max_{H, L}\left(\operatorname{symrank}\left(L, H\right)\right),$$
where the maximum is taken over all irreducible subgroups $H \leq \operatorname{Mon}_p\left(\mathbb{Z}\right)$ such that $-I_p \in H$ and all $H$-lattices $L \subseteq \mathbb{Z}^p$.
\end{prop}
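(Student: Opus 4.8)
The plan is to deduce the bound by enlarging $G$ to a group that contains $-I_p$, checking that this enlargement is still of the monomial type described in Proposition~\ref{Plesken prop}(a), and then upgrading the monomiality of the index-two subgroup furnished by that case to monomiality of the whole group. First I would set $\bar G := \langle G, -I_p\rangle \le \operatorname{GL}_p(\mathbb{Z})$; since $-I_p$ is central, $\bar G$ is finite, contains $G$, and remains irreducible, so by Lemma~\ref{symmetric rank facts}(c) it suffices to bound $\operatorname{symrank}(\mathbb{Z}^p, \bar G)$. Next I would check that $\bar G$ satisfies (a) rather than (b) of Proposition~\ref{Plesken prop}: as $G$ satisfies (a), $O_2(G^+)$ is a nontrivial normal $2$-subgroup of $G^+$, and since it is characteristic in $G^+$ while $G^+ \trianglelefteq \bar G$ (being the kernel of $\det$ on $\bar G$) and $G^+ \le \bar G^+$, it is also a nontrivial normal $2$-subgroup of $\bar G^+$; hence $O_2(\bar G^+) \ne 1$, ruling out (b). So $\bar G^+$ is $\operatorname{GL}_p(\mathbb{Q})$-conjugate to a group of integral monomial matrices and, containing an irreducible subgroup by (a), is itself irreducible.

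The crux is to promote this to: $\bar G$, not merely $\bar G^+$, is $\operatorname{GL}_p(\mathbb{Q})$-conjugate into $\operatorname{Mon}_p(\mathbb{Z})$. Conjugating over $\mathbb{Q}$, I may assume $\bar G^+ \le \operatorname{Mon}_p(\mathbb{Z})$; let $N := O_2(\bar G^+)$ and consider the projection $\operatorname{Mon}_p(\mathbb{Z}) \to S_p$ with kernel $\operatorname{D}_p(\mathbb{Z})$. Since $\bar G^+$ is irreducible, its image in $S_p$ is transitive, hence primitive (a transitive group of prime degree has no nontrivial block system); a nontrivial normal subgroup of a primitive group is transitive, hence of order divisible by $p$, so the normal $2$-group image of $N$ in $S_p$ must be trivial, i.e.\ $N \le \operatorname{D}_p(\mathbb{Z})$. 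Then each $e_i$ is an eigenvector of $N$ affording a character $\chi_i$, and $\bar G^+$ permutes $\{\mathbb{Q}e_1,\dots,\mathbb{Q}e_p\}$ transitively, hence permutes $\{\chi_1,\dots,\chi_p\}$ transitively, so this set has size dividing $p$. Because the multiplicative order $d$ of $2$ modulo an odd prime is at least $2$, one has $|N| = 2^{d\ell} \ge 4$, so $N$ does not act by scalars and the $\chi_i$ are not all equal; being an orbit of size dividing the prime $p$, they are therefore pairwise distinct, so the $N$-isotypic components of $\mathbb{Q}^p$ are exactly the $p$ coordinate lines. Since $N$ is characteristic in $\bar G^+ \trianglelefteq \bar G$, it is normal in $\bar G$, so $\bar G$ permutes these lines, i.e.\ $\bar G \le \operatorname{Mon}_p(\mathbb{Q})$.

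To finish, note that $\bar G$ preserves the lattice $L$ that is the image of $\mathbb{Z}^p$ under the conjugation above and permutes the coordinate lines, so it carries a generator $v_i$ of the rank-one lattice $L\cap\mathbb{Q}e_i$ to $\pm$ such a generator; hence in the basis $\{v_1,\dots,v_p\}$ the group $\bar G$ consists of signed permutation matrices, so a further diagonal conjugation over $\mathbb{Q}$ produces $C\in\operatorname{GL}_p(\mathbb{Q})$ with $H := C\bar G C^{-1}\le\operatorname{Mon}_p(\mathbb{Z})$. Rescaling $C$ by a positive integer, which leaves $H$ unchanged, I may assume $M := C(\mathbb{Z}^p)\subseteq\mathbb{Z}^p$; then $M$ is an $H$-lattice and $v\mapsto Cv$ is an isomorphism of $\mathbb{Z}^p$ with $M$ intertwining the $\bar G$- and $H$-actions, so $\operatorname{symrank}(M,H)=\operatorname{symrank}(\mathbb{Z}^p,\bar G)$ directly from Definition~\ref{symmetric rank def}. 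As $H$ is irreducible (a conjugate of $\bar G$) and contains $-I_p$, the pair $(M,H)$ is admissible in the right-hand maximum, and combining with the reduction of the first paragraph,
\[
\operatorname{symrank}\left(\mathbb{Z}^p, G\right) \le \operatorname{symrank}\left(\mathbb{Z}^p, \bar G\right) = \operatorname{symrank}\left(M, H\right) \le \max_{H',\,L}\operatorname{symrank}\left(L, H'\right),
\]
the maximum being over irreducible subgroups $H'\le\operatorname{Mon}_p(\mathbb{Z})$ with $-I_p\in H'$ and $H'$-lattices $L\subseteq\mathbb{Z}^p$.

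I expect the main obstacle to be exactly this crux step: Proposition~\ref{Plesken prop}(a) monomializes only $G^+$, and in a basis witnessing that, the remaining index-two coset need not act monomially. The device that makes it work is forcing $-I_p$ into the group (so that $O_2$ of the index-two subgroup is genuinely nontrivial) and then exploiting that this $O_2$ is normal in $\bar G$ with precisely the coordinate lines as isotypic components; this last point relies on the primitivity of transitive subgroups of $S_p$ together with $|N|\ge 4$, so it is genuinely special to prime dimension. By comparison, the enlargement to $\bar G$, the case-(a)/(b) dichotomy check, and the lattice bookkeeping in the final paragraph are routine.
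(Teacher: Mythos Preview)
Your argument is correct, but the step you call the ``crux'' is far harder than it needs to be, and the paper's two-line proof reveals why. Once you have adjoined $-I_p$ to form $\bar G$, the index $[\bar G:\bar G^+]$ equals $2$ (since $\det(-I_p)=-1$), and $-I_p$ itself lies in the nontrivial coset, so
\[
\bar G \;=\; \bar G^+ \,\cup\, (-I_p)\,\bar G^+ \;=\; \langle \bar G^+,\ -I_p\rangle .
\]
Now Proposition~\ref{Plesken prop}(a) gives $A\in\operatorname{GL}_p(\mathbb{Q})$ with $A\bar G^+A^{-1}\le\operatorname{Mon}_p(\mathbb{Z})$, and since $-I_p$ is central and already monomial,
\[
A\bar G A^{-1} \;=\; A\bar G^+A^{-1}\,\cup\,(-I_p)\,A\bar G^+A^{-1}\;\le\;\operatorname{Mon}_p(\mathbb{Z}).
\]
That is the whole ``crux'': monomializing $\bar G^+$ automatically monomializes $\bar G$. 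Your machinery of isotypic components of $N=O_2(\bar G^+)$, the primitivity of transitive prime-degree groups, and the count $|N|\ge 2^d\ge 4$ is all valid, but it is re-proving something that falls out for free from the coset decomposition above. The paper then simply invokes Proposition~\ref{Q-equivalence observations} to scale the image lattice into $\mathbb{Z}^p$, exactly as you do in your final paragraph.

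One small slip to fix: you write ``$G^+\trianglelefteq\bar G$ (being the kernel of $\det$ on $\bar G$)'', but $G^+$ is the kernel of $\det$ restricted to $G$, not to $\bar G$; the kernel on $\bar G$ is $\bar G^+$, which may be strictly larger. The normality $G^+\trianglelefteq\bar G$ that you need is still true, but for the right reason: $G^+\trianglelefteq G$ and $-I_p$ is central, so $G^+$ is normalized by all of $\langle G,-I_p\rangle=\bar G$.
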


\begin{proof}
This follows from Proposition \ref{Q-equivalence observations} and (a) of Proposition \ref{Plesken prop}.  Note that we may restrict our attention to those groups satisfying $-I_p \in H$ because any group not satisfying this condition is certainly not a maximal subgroup of $\operatorname{GL}_p\left(\mathbb{Z}\right)$ up to $\mathbb{Z}$-conjugacy.
\end{proof}

\begin{observation} \label{when -I_n in G}
If $-I_n \in G \leq \operatorname{GL}_n\left(\mathbb{Z}\right)$ for some odd $n \in \mathbb{Z}^+$, then:
\begin{enumerate}
\item $G = -G$.
\item $\left|G\right| = 2\left|G^+\right|$.
\item $O_2\left(G\right) \cong O_2\left(G^+\right) \times C_2$.
\end{enumerate}
\end{observation}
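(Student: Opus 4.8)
The plan is to verify each of the three items directly from the hypothesis $-I_n \in G$ together with the assumption that $n$ is odd, which guarantees $\det(-I_n) = (-1)^n = -1$. For item (a), I would observe that since $-I_n \in G$ and $-I_n$ is central in $\operatorname{GL}_n(\mathbb{Z})$, we have $-g = (-I_n)g \in G$ for every $g \in G$, so $G = -G$. For item (b), the key point is that $\det(-I_n) = -1$ because $n$ is odd, so $-I_n \in G \setminus G^+$; hence the determinant homomorphism $\det : G \to \{\pm 1\}$ is surjective with kernel $G^+$, giving $[G : G^+] = 2$ and therefore $|G| = 2|G^+|$.

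For item (c), I would argue that $O_2(G^+) \times \langle -I_n \rangle$ is a normal $2$-subgroup of $G$ that is contained in $O_2(G)$, and conversely that $O_2(G)$ is contained in this product. First, $-I_n$ is central in $G$ and has order $2$, and $O_2(G^+)$ is normal in $G$ (being characteristic in the normal subgroup $G^+$), so their product is a normal $2$-subgroup of $G$; note the product is internal and direct since $-I_n \notin G^+ \supseteq O_2(G^+)$, so $\langle -I_n \rangle \cap O_2(G^+) = 1$. Hence $O_2(G^+) \times C_2 \leq O_2(G)$. For the reverse inclusion, given the normal $2$-subgroup $O_2(G)$ of $G$, its intersection with $G^+$ is a normal $2$-subgroup of $G^+$ — actually normal in $G$ since $G^+ \trianglelefteq G$ — hence contained in $O_2(G^+)$; and since $[G : G^+] = 2$ by item (b), we have $[O_2(G) : O_2(G) \cap G^+] \leq 2$, so $O_2(G) \leq (O_2(G) \cap G^+)\langle -I_n \rangle \leq O_2(G^+) \times \langle -I_n \rangle$. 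Combining the two inclusions gives $O_2(G) = O_2(G^+) \times \langle -I_n \rangle \cong O_2(G^+) \times C_2$.

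None of the three steps presents a serious obstacle; the only point requiring a little care is the reverse inclusion in item (c), where one must use that $O_2(G) \cap G^+$ is normal in all of $G$ (not merely in $G^+$) in order to conclude it lies inside $O_2(G^+)$, and then use the index-$2$ fact from item (b) to capture the remaining part of $O_2(G)$ via $-I_n$. Everything else is a routine consequence of $-I_n$ being a central involution of determinant $-1$.
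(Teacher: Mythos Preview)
Your proof is correct. The paper does not supply a proof of this observation at all; it is stated without argument, so there is nothing to compare against. One minor remark on your commentary: in the final paragraph you say one must use that $O_2(G)\cap G^+$ is normal in all of $G$ in order to place it inside $O_2(G^+)$, but in fact normality in $G^+$ alone is what is required, since $O_2(G^+)$ is by definition the maximal normal $2$-subgroup of $G^+$. (Normality in $G$ is certainly true---it is the intersection of two normal subgroups of $G$---but it is not the relevant fact for that particular step.) This does not affect the validity of your argument.
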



Unless stated otherwise, throughout the remainder of this section, $G \leq \operatorname{Mon}_n\left(\mathbb{Z}\right)$ is an irreducible group satisfying the conditions that $-I_n \in G$, that $\left|O_2\left(G^+\right)\right| > 1$, and that $\pi\left(G\right) \leq S_n$ contains an $n$-cycle (because of the final sentence in (a) of Proposition \ref{Plesken prop}), where $\pi:  G \to S_n$ is the projection map.  
\par
We now classify the possibilities for $O_2\left(G\right)$.  First, we make an observation about $2$-subgroups of $S_n$ for odd $n > 2$.

\begin{lem} \label{p-cycle has no fixed points}
If $\sigma \in S_n$ is an $n$-cycle for $n \geq 3$ odd, then the action of conjugation by $\sigma$ on the set of 2-subgroups of $S_n$ has no nontrivial fixed points.
\end{lem}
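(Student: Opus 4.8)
The plan is to argue by contradiction: suppose $P$ is a nontrivial $2$-subgroup of $S_n$ that is fixed (as a set) by conjugation by the $n$-cycle $\sigma$. I would exploit the fact that $\sigma$ normalizes $P$, so that $\langle P, \sigma \rangle$ is a group in which $P$ is normal and $\langle \sigma \rangle$ acts on it. First I would observe that since $n$ is odd, $\sigma$ has order $n$ which is coprime to $2$, so the subgroup $P \rtimes \langle \sigma \rangle$ (or rather $P\langle\sigma\rangle$) is a group of order $2^a m$ with $m$ odd and $\langle \sigma\rangle$ a Sylow-like complement; the key point is that $\langle \sigma \rangle \cap P = 1$ and $\sigma$ acts on $P$ by conjugation as an automorphism of order dividing $n$.

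Next I would pin down how $\sigma$ can act. The natural approach is to look at the set $\Omega = \{1, \dots, n\}$ on which $S_n$ acts, and use that $\sigma$ acts transitively on $\Omega$ (it is an $n$-cycle). If $P \neq 1$, pick a nontrivial element $\tau \in P$; then $\tau$ moves some point, and since $\sigma$ is transitive, the conjugates $\sigma^i \tau \sigma^{-i}$ collectively move every point of $\Omega$ — so the normal closure of $\tau$ under $\langle\sigma\rangle$, which lies in $P$, has no fixed points on $\Omega$ and hence has support of size $n$. But $|P|$ is a power of $2$, so any orbit of $P$ on $\Omega$ has size a power of $2$; summing orbit sizes, the support of $P$ has size a sum of powers of $2$ (each $\geq 2$). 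I would then combine this with the transitivity of $\sigma$: since $\sigma$ normalizes $P$, $\sigma$ permutes the $P$-orbits on $\Omega$, and because $\sigma$ is transitive on $\Omega$ it must be transitive on the set of $P$-orbits; hence all $P$-orbits have the same size $2^b$ with $b \geq 1$, and the number of them is $n/2^b$. But $n$ is odd, forcing $2^b \mid n$ with $b \geq 1$ — a contradiction. Therefore $P = 1$.

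The step I expect to require the most care is making the orbit-counting argument fully rigorous: I must check that "$\sigma$ normalizes $P$" genuinely implies "$\sigma$ permutes the $P$-orbits" (immediate, since $P^\sigma = P$ means $\sigma$ carries orbits to orbits), and that transitivity of $\sigma$ on $\Omega$ forces transitivity on the orbit set (also immediate, as $\Omega \to \{P\text{-orbits}\}$ is $\sigma$-equivariant and surjective with $\sigma$ transitive on the source). The genuinely delicate point is simply confirming that a nontrivial $2$-group acting on $\Omega$ cannot have all orbits of size $1$ — i.e., if $P$ fixes every point then $P = 1$ — which holds because $S_n$ acts faithfully on $\Omega$. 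Once these are in place the parity contradiction ($2^b \mid n$, $n$ odd, $b \geq 1$) closes the argument. An alternative, essentially equivalent route would be to use that $\langle\sigma\rangle$ acts coprimely on $P$ so $P = C_P(\sigma) \cdot [P,\sigma]$ and analyze $C_P(\sigma)$ directly, but the orbit argument above is cleaner and avoids coprime-action machinery.
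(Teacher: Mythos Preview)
Your proof is correct and takes a genuinely different route from the paper. The paper first invokes a structural description of the Sylow $2$-subgroups of $S_n$ (citing \cite{Covello}) to deduce that, since $n$ is odd, every nontrivial $2$-subgroup $P \leq S_n$ must fix some point $j \in \{1,\dots,n\}$; then, since $\sigma$ normalizes $P$ and $\langle\sigma\rangle$ is transitive, $P$ would have to fix every point of $\{1,\dots,n\}$, forcing $P = 1$. Your orbit-counting argument bypasses this external input entirely: from $\sigma P \sigma^{-1} = P$ you get directly that $\langle\sigma\rangle$ permutes the $P$-orbits transitively, hence all $P$-orbits have common size $2^b$, and $P \neq 1$ forces $b \geq 1$, contradicting $2^b \mid n$ with $n$ odd. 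Your approach is more elementary and self-contained; the paper's approach, while shorter once the cited result is in hand, relies on the Sylow $2$-structure of $S_n$ as a black box. Note also that your argument works verbatim for any prime $\ell$ in place of $2$ (nontrivial $\ell$-subgroups cannot be normalized by an $n$-cycle when $\ell \nmid n$), whereas the paper's phrasing is tied to the base-$2$ decomposition.
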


\begin{proof}
By \cite[Thm. 4.2.3]{Covello}, we have the following description of the Sylow-2 subgroups of $S_n$:  Writing $n$ in its base-$2$ representation $n = \sum_{i=0}^s a_i2^i$ (each $a_i \in \left\lbrace 0, 1\right\rbrace$ and $a_s = 1$), the Sylow $2$-subgroups of $S_n$ are isomorphic to the direct products of the Sylow 2-subgroups of $S_{2^i}$ for $0 \leq i \leq s$ with $a_i = 1$.  Since $n$ is odd, any nontrivial 2-subgroup $S \leq S_n$ therefore leaves fixed some $1 \leq j \leq n$, but some element of $\sigma S \sigma^{-1}$ moves $j$ since $\sigma$ contains an $n$-cycle.  Thus, $\sigma S \sigma^{-1} \neq S$.
\end{proof}

\begin{lem} \label{O_2(G) consists of diagonal matrices}
Let $n > 2$ be odd and let $G \leq \operatorname{Mon}_n\left(\mathbb{Z}\right)$ be a group of monomial matrices such that $\pi\left(G\right)$ contains an $n$-cycle.  Then, $O_2\left(G\right) \leq \operatorname{D}_n\left(\mathbb{Z}\right)$ (or, equivalently, $O_2\left(G\right) = G \cap \operatorname{D}_n\left(\mathbb{Z}\right)$) and $G \cong O_2\left(G\right) \rtimes \pi\left(G\right)$.
\end{lem}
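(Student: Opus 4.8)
The plan is to prove the two assertions in turn. For the first — that $O_2(G) = G \cap \operatorname{D}_n\left(\mathbb{Z}\right)$ — one inclusion is immediate: $G \cap \operatorname{D}_n\left(\mathbb{Z}\right)$ is a normal subgroup of $G$ contained in $\operatorname{D}_n\left(\mathbb{Z}\right) \cong C_2^n$, hence a normal $2$-subgroup, so $G \cap \operatorname{D}_n\left(\mathbb{Z}\right) \leq O_2(G)$. For the reverse inclusion I would show $\pi\left(O_2(G)\right) = 1$: since $O_2(G) \trianglelefteq G$, its image $\pi\left(O_2(G)\right)$ is normal in $\pi(G)$, and, being a quotient of a $2$-group, it is itself a $2$-group; thus $\pi\left(O_2(G)\right)$ is a normal $2$-subgroup of $\pi(G)$. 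Because $\pi(G)$ contains an $n$-cycle $\sigma$, conjugation by $\sigma$ preserves $\pi\left(O_2(G)\right)$, so Lemma \ref{p-cycle has no fixed points} forces $\pi\left(O_2(G)\right) = 1$, i.e.\ $O_2(G) \leq \ker \pi = G \cap \operatorname{D}_n\left(\mathbb{Z}\right)$. Combining the two inclusions gives the equality, which is the asserted equivalent of $O_2(G) \leq \operatorname{D}_n\left(\mathbb{Z}\right)$.

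For the second assertion, that the short exact sequence $1 \to O_2(G) \to G \xrightarrow{\pi} \pi(G) \to 1$ splits, put $A := O_2(G) = G \cap \operatorname{D}_n\left(\mathbb{Z}\right)$, an elementary abelian normal $2$-subgroup of $G$. I would invoke Gasch\"{u}tz's splitting theorem: $A$ has a complement in $G$ if and only if it has a complement in some Sylow $2$-subgroup $P$ of $G$. Since $A$ is a normal $2$-subgroup we have $A \leq P$, and $A \leq P \cap \operatorname{D}_n\left(\mathbb{Z}\right) \leq G \cap \operatorname{D}_n\left(\mathbb{Z}\right) = A$ shows $P \cap \operatorname{D}_n\left(\mathbb{Z}\right) = A$; hence $\pi|_P$ is a surjection $P \to \pi(P)$ with kernel $A$, where $\pi(P)$ is a $2$-subgroup of $S_n$. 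Here the oddness of $n$ enters essentially: a $2$-group acting on an $n$-element set fixes a number of points congruent to $n \equiv 1 \bmod 2$, so $\pi(P)$ fixes some coordinate $j$, and $g \mapsto g_{jj}$ is a homomorphism $P \to \{\pm 1\}$. I would use this fixed coordinate to split off a $\{\pm 1\}$-factor and run an induction on $|P|$, building a complement by choosing compatible monomial lifts of the elements of $\pi(P)$. Equivalently — and this is the cohomological heart of the matter — one must show that the extension class in $H^2\!\left(\pi(G), A\right)$ vanishes; the relevant structural input (in the case $-I_n \in G$) is that the $\pi(G)$-module $\operatorname{D}_n\left(\mathbb{Z}\right) \cong \mathbb{F}_2^n$ decomposes as the line spanned by $-I_n$ together with the sum-zero submodule, a decomposition valid precisely because $n$ is odd, which constrains the obstruction.

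The first assertion is a short normality-and-orbit-counting argument via Lemma \ref{p-cycle has no fixed points}; the main obstacle is the semidirect-product decomposition, and it genuinely depends on $n$ being odd — for even $n$ the analogue fails, since $\left(\begin{smallmatrix}0 & 1 \\ -1 & 0\end{smallmatrix}\right)$ generates a subgroup $C_4 \leq \operatorname{Mon}_2\left(\mathbb{Z}\right)$ meeting $\operatorname{D}_2\left(\mathbb{Z}\right)$ in $\langle -I_2 \rangle$ with no complement. So the delicate point is to rule out any such non-split configuration inside $\operatorname{Mon}_n\left(\mathbb{Z}\right)$ when $n$ is odd, which I would handle through the fixed-coordinate reduction above, exploiting that $2$-elements of $\pi(P)$ necessarily fix coordinates on which squares of monomial matrices are forced to equal $+1$.
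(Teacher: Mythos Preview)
Your argument for $O_2(G) = G \cap \operatorname{D}_n(\mathbb{Z})$ is correct and is exactly the paper's: $\pi(O_2(G))$ is a normal $2$-subgroup of $\pi(G)$, hence trivial by Lemma~\ref{p-cycle has no fixed points}, so $O_2(G) \leq \ker\pi = G \cap \operatorname{D}_n(\mathbb{Z})$, and the reverse inclusion is clear.

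For the semidirect product you actually go beyond the paper, which writes only ``the subsequent statement then follows'' with no further justification. Your Gasch\"{u}tz reduction to a Sylow $2$-subgroup $P$, together with the observation that the $2$-group $\pi(P)$ acting on a set of odd size must fix a coordinate, is sound and is already more of an argument than the paper provides. That said, the step ``split off a $\{\pm 1\}$-factor and run an induction on $|P|$'' is only sketched, and your cohomological remarks describe the obstruction rather than dispatch it, so this half remains a plan rather than a complete proof --- though no less complete than the paper's own treatment. It is worth noting that the paper's later applications use only the identity $|G| = |O_2(G)|\,|\pi(G)|$, which follows from the exact sequence $1 \to O_2(G) \to G \xrightarrow{\pi} \pi(G) \to 1$ without any splitting; this may explain why the paper is so casual here.

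One minor point on your $C_4 \leq \operatorname{Mon}_2(\mathbb{Z})$ example: there $G$ is itself a $2$-group, so $O_2(G) = G \neq G \cap \operatorname{D}_2(\mathbb{Z}) = \langle -I_2\rangle$; thus what fails first for even $n$ is the identification $O_2(G) = G \cap \operatorname{D}_n(\mathbb{Z})$, not merely the splitting over $O_2(G)$. The example still makes your intended point that oddness of $n$ is essential.
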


\begin{proof}
We prove that $O_2\left(G\right) \leq \operatorname{D}_n\left(\mathbb{Z}\right)$; the subsequent statement then follows.  View $G$ as a subgroup of $\operatorname{D}_n\left(\mathbb{Z}\right) \rtimes S_n$, and observe that $\pi\left(O_2\left(G\right)\right) \trianglelefteq \pi\left(G\right)$ is a normal 2-subgroup.  Since $\pi\left(G\right)$ contains an $n$-cycle, Lemma \ref{p-cycle has no fixed points} implies that $\pi\left(O_2\left(G\right)\right)$ must be trivial.
\end{proof}

We now have the following corollary of Lemma \ref{O_2(G) consists of diagonal matrices}.  Notationally, for a prime power $q$, we write $\mathbb{F}_q$ for the field with $q$ elements.  When $q$ is prime, we take $\mathbb{F}_q$ to be the set $\left\lbrace 0, 1, \dots, q - 1\right\rbrace$ under the operations of addition and multiplication modulo $q$

\begin{cor} \label{O_2(G) is a pi(G) stable subspace}
Under the conditions of Lemma \ref{O_2(G) consists of diagonal matrices}, $O_2\left(G\right)$ must correspond to a $\pi\left(G\right)$-stable subspace of $C_2^p \cong \mathbb{F}_2^p$, where $\pi\left(G\right) \leq S_p$ acts by shuffling coordinates.
\end{cor}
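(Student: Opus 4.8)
The plan is to reduce everything to the single fact, already supplied by Lemma \ref{O_2(G) consists of diagonal matrices}, that $O_2\left(G\right)$ is a normal subgroup of $G$ contained in $\operatorname{D}_p\left(\mathbb{Z}\right)$, and then to make the conjugation action of $G$ on $\operatorname{D}_p\left(\mathbb{Z}\right)$ completely explicit. First I would observe that under the identification $\operatorname{D}_p\left(\mathbb{Z}\right) \cong C_2^p \cong \mathbb{F}_2^p$, the subgroup $O_2\left(G\right) = G \cap \operatorname{D}_p\left(\mathbb{Z}\right)$ is automatically an $\mathbb{F}_2$-linear subspace, since every subgroup of the elementary abelian group $\mathbb{F}_2^p$ is a subspace; so the word ``subspace'' in the statement costs nothing beyond ``subgroup.''

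The heart of the argument is the following elementary computation. Write an arbitrary monomial matrix $g \in \operatorname{Mon}_p\left(\mathbb{Z}\right)$ as $g = D_g P_{\pi\left(g\right)}$, where $D_g \in \operatorname{D}_p\left(\mathbb{Z}\right)$ and $P_{\pi\left(g\right)}$ is the permutation matrix of $\pi\left(g\right) \in S_p$. For any diagonal matrix $d$, conjugation by $D_g$ fixes $d$ (diagonal matrices commute), so $g d g^{-1} = P_{\pi\left(g\right)} d P_{\pi\left(g\right)}^{-1}$, which is the diagonal matrix whose $i$-th diagonal entry equals the $\pi\left(g\right)^{-1}\left(i\right)$-th diagonal entry of $d$. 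Translating through $\operatorname{D}_p\left(\mathbb{Z}\right) \cong \mathbb{F}_2^p$, this is exactly the coordinate-shuffling action of $\pi\left(g\right)$ on $\mathbb{F}_2^p$. Hence the conjugation action of $G$ on $\operatorname{D}_p\left(\mathbb{Z}\right)$ factors through $\pi$ and coincides with the permutation action of $\pi\left(G\right) \leq S_p$ on $\mathbb{F}_2^p$.

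To finish, since $O_2\left(G\right) \trianglelefteq G$, the subgroup $O_2\left(G\right)$ is stable under conjugation by every element of $G$; by the previous paragraph this says precisely that $O_2\left(G\right)$, regarded as a subspace of $\mathbb{F}_2^p$, is stable under $\pi\left(G\right)$ acting by shuffling coordinates, which is the claim. I do not anticipate a real obstacle here: the only points requiring care are the bookkeeping in the conjugation formula (keeping track of $\pi\left(g\right)$ versus $\pi\left(g\right)^{-1}$) and the remark that the sign part $D_g$ of a monomial matrix plays no role once the conjugation is restricted to the diagonal subgroup.
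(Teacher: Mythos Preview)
Your proposal is correct and matches the paper's approach: the paper states the corollary without formal proof, relying on Lemma \ref{O_2(G) consists of diagonal matrices} for $O_2\left(G\right) \leq \operatorname{D}_p\left(\mathbb{Z}\right)$ and then, in the paragraph immediately following the corollary, makes exactly your conjugation computation $D \mapsto M_{\sigma}DM_{\sigma}^{-1}$ explicit. Your write-up simply fills in the details the paper leaves to the reader, including the observation that the diagonal factor $D_g$ of a monomial matrix drops out of the conjugation and that any subgroup of $\mathbb{F}_2^p$ is automatically a subspace.
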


For $\sigma \in \pi\left(G\right)$, we will write $M_{\sigma}$ for the permutation matrix in $\operatorname{GL}_n\left(\mathbb{Z}\right)$ corresponding to $\sigma$.  At the level of matrices, the explicit action of $\pi\left(G\right)$ on $O_2\left(G\right) = G \cap \operatorname{D}_p\left(\mathbb{Z}\right)$ is as follows:  For $D \in O_2\left(G\right)$, the permutation $\sigma \in \pi\left(G\right)$ acts on $D$ via 
$$D \mapsto M_{\sigma}DM_{\sigma}^{-1}.$$  
This has the effect of taking the $\left(i, i\right)$ entry in $D$ to position $\left(\sigma\left(i\right), \sigma\left(i\right)\right)$.
\par
Corollary \ref{O_2(G) is a pi(G) stable subspace} motivates the study of subspaces of $\mathbb{F}_2^p$ that are stable under certain subgroups of $S_p$.  In particular, as the cyclic group $C_p \leq \pi\left(G\right)$ by assumption, the set of $\pi\left(G\right)$-stable subspaces of $\mathbb{F}_2^p$ is a subset of the set of $C_p$-stable subspaces of $\mathbb{F}_2^p$.  We will write $\overline{k}$ for the algebraic closure of the field $k$.

\begin{lem} \label{factorization of x^p - 1 lemma}
Let $p > 2$ be prime, let $d$ be the multiplicative order of $2$ modulo $p$, let $\zeta_p \in \overline{\mathbb{F}_2}$ be a primitive $p$th root of unity, and let $\left\lbrace t_1, \dots, t_{\left(p-1\right)/d}\right\rbrace \subseteq \mathbb{F}_p^{\times}$ be a transversal for the orbit of $2$.  Then, in the polynomial ring $\mathbb{F}_2\left[x\right]$, the polynomial $x^p - 1$ factors into irreducibles as $x^p - 1 = \prod_{i = 0}^{\left(p-1\right)/d} f_i$, where $f_0\left(x\right) := x - 1$ and $f_i\left(x\right) := \prod_{j = 0}^{d-1} \left(x - \zeta_p^{2^jt_i}\right)$ for all $1 \leq i \leq \frac{p-1}{d}$.
\end{lem}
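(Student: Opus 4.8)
The plan is to run the standard Galois theory of finite fields: the irreducible factors of $x^p - 1$ over $\mathbb{F}_2$ correspond to the orbits of the Frobenius automorphism on the set of $p$th roots of unity in $\overline{\mathbb{F}_2}$, and that Frobenius action is exactly multiplication by $2$ on exponents.

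First I would observe that since $p$ is odd, $x^p - 1$ is separable over $\mathbb{F}_2$ (its formal derivative $px^{p-1}$ is nonzero and shares no root with $x^p-1$, as $x \nmid x^p-1$), so over $\overline{\mathbb{F}_2}$ it splits as $x^p - 1 = \prod_{a \in \mathbb{Z}/p\mathbb{Z}}\left(x - \zeta_p^a\right)$, the roots being precisely the cyclic group $\mu_p$ of $p$th roots of unity generated by $\zeta_p$. The Frobenius $\phi\colon y \mapsto y^2$ topologically generates $\operatorname{Gal}\left(\overline{\mathbb{F}_2}/\mathbb{F}_2\right)$ and sends $\zeta_p^a \mapsto \zeta_p^{2a}$, so under the identification $\zeta_p^a \leftrightarrow a$ it acts on $\mathbb{Z}/p\mathbb{Z}$ by multiplication by $2$. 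Consequently the monic irreducible factors of $x^p-1$ in $\mathbb{F}_2[x]$ are exactly the polynomials $\prod_{a \in O}\left(x - \zeta_p^a\right)$ with $O$ ranging over the $\langle 2\rangle$-orbits on $\mathbb{Z}/p\mathbb{Z}$: each such polynomial lies in $\mathbb{F}_2[x]$ because its root set is $\phi$-stable, and is irreducible because $\phi$ acts transitively on that root set.

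Next I would pin down the orbit structure. The exponent $0$ is fixed, yielding $f_0 = x - 1$. On $\mathbb{F}_p^{\times}$ the subgroup $\langle 2\rangle$ of order $d$ acts by multiplication, so its orbits are precisely the cosets of $\langle 2\rangle$; hence every nonzero orbit has size exactly $d$ and there are $\frac{p-1}{d}$ of them. For the chosen transversal $\left\lbrace t_1, \dots, t_{(p-1)/d}\right\rbrace$, the orbit through $t_i$ is $\left\lbrace 2^j t_i : 0 \le j \le d-1\right\rbrace$, so the associated irreducible factor is $f_i(x) = \prod_{j=0}^{d-1}\left(x - \zeta_p^{2^j t_i}\right)$ of degree $d$ — equivalently, since $t_i \ne 0$ makes $\zeta_p^{t_i}$ a primitive $p$th root of unity, $f_i$ is the minimal polynomial of $\zeta_p^{t_i}$ over $\mathbb{F}_2$, of degree $[\mathbb{F}_2(\zeta_p^{t_i}):\mathbb{F}_2] = d$ because $d$ is minimal with $p \mid 2^d - 1$. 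Finally, since $\{0\}$ together with the cosets of $\langle 2\rangle$ partition $\mathbb{Z}/p\mathbb{Z}$, the product $f_0 \prod_{i=1}^{(p-1)/d} f_i$ hits each root $\zeta_p^a$ exactly once and therefore equals $x^p - 1$ (the degrees agree: $1 + d\cdot\tfrac{p-1}{d} = p$). I do not expect a real obstacle here; the only points needing a line of care are that each nonzero $\langle 2\rangle$-orbit has size exactly $d$ (immediate from the coset description) and that each $f_i$ genuinely has coefficients in $\mathbb{F}_2$ (immediate from $\phi$-stability of the orbit).
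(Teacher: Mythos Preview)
Your argument is correct and follows essentially the same approach as the paper: the paper's proof is a single sentence invoking the standard fact that the Frobenius $y \mapsto y^2$ generates the relevant Galois group, and you have simply unpacked that sentence into the explicit orbit computation on $\mathbb{Z}/p\mathbb{Z}$.
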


\begin{proof}
This follows from the standard fact that the Frobenius map $y \mapsto y^2$ generates any finite subgroup of the Galois group $\operatorname{Gal}\left(\overline{\mathbb{F}_2}/\mathbb{F}_2\right)$ (see, e.g., \cite[\S 14.9]{Dummit}).
\end{proof}

\begin{lem} \label{C_p-stable subspaces of F_2^p}
Let $p > 2$ be prime, let $C_p$ act on $\mathbb{F}_2^p$ by shuffling coordinates, and let $d$ be the multiplicative order of $2$ modulo $p$.  Then, there are $\frac{p-1}{d} + 1$ irreducible $C_p$-stable subspaces of $\mathbb{F}_2^p$, one corresponding to each of the irreducible factors of $x^p - 1$ in the polynomial ring $\mathbb{F}_2\left[x\right]$.
\end{lem}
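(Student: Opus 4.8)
The plan is to reinterpret the cyclic shift action as the regular module over the group algebra $\mathbb{F}_2[C_p]$ and then read off its submodule structure directly from the factorization of $x^p - 1$ supplied by Lemma \ref{factorization of x^p - 1 lemma}.

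First I would fix a generator $\sigma$ of $C_p$ and identify $\mathbb{F}_2^p$, with its coordinate-shuffling action, with the group algebra $\mathbb{F}_2[C_p] \cong R := \mathbb{F}_2[x]/(x^p - 1)$, where $\sigma$ corresponds to multiplication by $x$. Under this identification the $C_p$-stable subspaces of $\mathbb{F}_2^p$ are exactly the ideals of $R$, and the irreducible (i.e.\ minimal nonzero) $C_p$-stable subspaces are the minimal nonzero ideals of $R$. So the lemma becomes a statement about the ideal structure of $R$.

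Next, since $p$ is odd we have $\gcd(p,2) = 1$, so $x^p - 1$ is separable over $\mathbb{F}_2$; by Lemma \ref{factorization of x^p - 1 lemma} it factors as $x^p - 1 = \prod_{i=0}^{(p-1)/d} f_i$ into pairwise distinct irreducibles, with $\deg f_0 = 1$ and $\deg f_i = d$ for $i \geq 1$ (note $1 + d\cdot\frac{p-1}{d} = p$, as it must be). The Chinese Remainder Theorem then gives a ring isomorphism $R \cong \prod_{i=0}^{(p-1)/d} \mathbb{F}_2[x]/(f_i)$, a finite product of fields, so $R$ is a commutative semisimple ring whose simple factors are pairwise non-isomorphic. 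Setting $g_i := (x^p-1)/f_i$, the principal ideal $(g_i) \subseteq R$ is annihilated by $f_i$, hence is a one-dimensional vector space over the field $\mathbb{F}_2[x]/(f_i)$ and is therefore a minimal ideal of $R$; this is the $C_p$-stable subspace ``corresponding to the factor $f_i$''.

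Finally I would invoke the structure of modules over a semisimple ring to finish: as a module over itself, $R = \bigoplus_{i=0}^{(p-1)/d} (g_i)$ is the internal direct sum of its minimal ideals, and the isotypic component of $R$ belonging to the simple module $\mathbb{F}_2[x]/(f_i)$ consists of the single copy $(g_i)$ — this multiplicity-one statement is the one place where one genuinely uses that each factor $\mathbb{F}_2[x]/(f_i)$ is a field rather than a larger matrix algebra. Consequently every submodule of $R$ is a partial sum of the $(g_i)$, every simple submodule equals some $(g_i)$, and there are exactly $\frac{p-1}{d}+1$ of them. Translating back, $\mathbb{F}_2^p$ has exactly $\frac{p-1}{d}+1$ irreducible $C_p$-stable subspaces, one for each irreducible factor of $x^p - 1$. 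I do not anticipate a serious obstacle here: the arithmetic input is entirely contained in Lemma \ref{factorization of x^p - 1 lemma}, and what remains is the routine (but worth stating carefully) dictionary between $C_p$-subrepresentations and ideals of $R$, together with the multiplicity-one bookkeeping.
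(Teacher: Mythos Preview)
Your proposal is correct and follows essentially the same approach as the paper: identify $\mathbb{F}_2^p$ with $R=\mathbb{F}_2[x]/(x^p-1)$ so that $C_p$-stable subspaces become ideals, then use Lemma~\ref{factorization of x^p - 1 lemma} together with the Chinese Remainder Theorem to decompose $R$ as a product of fields and read off the minimal ideals. You supply more detail on the multiplicity-one bookkeeping than the paper does, but the underlying argument is the same.
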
 

\begin{proof}
Observe that $C_p$-stable subspaces of $\mathbb{F}_2^p$ correspond to ideals in the quotient ring $R := \mathbb{F}_2\left[x\right]/\left(x^p - 1\right)$.  Lemma \ref{factorization of x^p - 1 lemma} and the Chinese Remainder Theorem give the following decomposition of $R$:
\begin{equation} \label{F_2[x]/(x^p-1) decomposition}
R \cong \bigoplus_{i = 0}^{\left(p-1\right)/d} \mathbb{F}_2\left[x\right]/\left(f_i\right) \cong \mathbb{F}_2 \oplus \bigoplus_{i=1}^{\left(p-1\right)/d} \mathbb{F}_{2^d},
\end{equation}
where the $f_i$ are the irreducible polynomials defined in Lemma \ref{factorization of x^p - 1 lemma}.
\end{proof}

We thus obtain a bijective correspondence between subsets of $\left\lbrace 0, 1, \dots, \frac{p-1}{d}\right\rbrace$ and $C_p$-stable subspaces of $\mathbb{F}_2^p$.  
From here, we can explicitly construct the possibilities for $O_2\left(G\right)$.

\begin{cor} \label{representations of O_2(G)}
Let $p > 2$ be prime, let $G < \operatorname{Mon}_p\left(\mathbb{Z}\right)$ be an irreducible group, and let $d$ be the multiplicative order of $2$ modulo $p$.  Write $\mathbb{F}_2 = \left\lbrace 0, 1\right\rbrace$ and $x^p - 1 = \prod_{i=0}^{\left(p-1\right)/d}f_i$ in $\mathbb{F}_2\left[x\right]$, with the $f_i$'s as defined in Lemma \ref{factorization of x^p - 1 lemma}.  For $0 \leq i \leq \frac{p-1}{d}$, define $g_i := \prod_{j \neq i} f_j \in \mathbb{F}_2\left[x\right]$ and define the diagonal matrix $D_i \in \operatorname{D}_p\left(\mathbb{Z}\right)$ via
$$D_{i_{j,j}} := 
\begin{cases}
1 & \text{ if the }x^{j-1}\text{ coefficient of }g_i \text{ is 0} \\
-1 & \text{ if the }x^{j-1}\text{ coefficient of }g_i \text{ is 1.}
\end{cases}$$ 
Then, for some $S \subseteq \left\lbrace 0, 1, \dots, \frac{p-1}{d}\right\rbrace$, we have
$$O_2\left(G\right) = \left\langle M_{\sigma} D_i M_{\sigma}^{-1} \ | \ \sigma \in \pi\left(G\right), i \in S\right\rangle \leq \operatorname{D}_p\left(\mathbb{Z}\right),$$
where $M_{\sigma} \in \operatorname{GL}_p\left(\mathbb{Z}\right)$ is the permutation matrix corresponding to the permutation $\sigma \in \pi\left(G\right)$.
\end{cor}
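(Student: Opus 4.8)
The plan is to bootstrap from the two structural facts already in hand: Corollary~\ref{O_2(G) is a pi(G) stable subspace}, which identifies $O_2(G)$ with a $\pi(G)$-stable subspace of $\mathbb{F}_2^p$ under $\operatorname{D}_p(\mathbb{Z}) \cong \mathbb{F}_2^p$, and Lemma~\ref{C_p-stable subspaces of F_2^p}, which classifies the $C_p$-stable subspaces by passing to the ring $R := \mathbb{F}_2[x]/(x^p-1)$. The new ingredient is to write down explicit generators for the minimal $C_p$-stable subspaces and to translate them back into diagonal matrices.

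First I would fix the chain of identifications carefully. The group isomorphism $\operatorname{D}_p(\mathbb{Z}) \xrightarrow{\sim} \mathbb{F}_2^p$ sending $\operatorname{diag}((-1)^{v_1},\dots,(-1)^{v_p})$ to $(v_1,\dots,v_p)$ turns the conjugation $D \mapsto M_{\sigma} D M_{\sigma}^{-1}$ into the coordinate-shuffling action of $\sigma \in \pi(G)$, as observed just before the statement. Composing with the identification $\mathbb{F}_2^p \cong R$ of Lemma~\ref{C_p-stable subspaces of F_2^p} (the $j$-th coordinate corresponding to $x^{j-1}$), a $p$-cycle in $\pi(G)$ acts as multiplication by $x$ in $R$. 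With these conventions, the sign rule defining $D_i$ says precisely that $D_i$ corresponds to the coefficient vector of $g_i$, i.e.\ to the residue class $g_i \in R$.

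Next I would identify, for each $0 \le i \le \tfrac{p-1}{d}$, the ideal $(g_i) \subseteq R$ with the $i$-th summand of the decomposition $R \cong \bigoplus_j \mathbb{F}_2[x]/(f_j)$ from \eqref{F_2[x]/(x^p-1) decomposition}: since $f_j \mid g_i$ for all $j \ne i$, the class of $g_i$ vanishes in every summand except the $i$-th, while $\gcd(f_i,g_i)=1$ makes $g_i$ a unit in the field $\mathbb{F}_2[x]/(f_i)$, so $(g_i)$ is that entire summand. Hence $(g_i)$ is the minimal nonzero $C_p$-stable subspace of $\mathbb{F}_2^p$ attached by Lemma~\ref{C_p-stable subspaces of F_2^p} to the factor $f_i$, and as an $\mathbb{F}_2$-subspace it is spanned by the cyclic shifts $x^k g_i$; under the identifications above this is exactly the span of the conjugates $M_{\tau} D_i M_{\tau}^{-1}$ with $\tau \in C_p$.

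Finally I would assemble the conclusion. By Corollary~\ref{O_2(G) is a pi(G) stable subspace} the subgroup $O_2(G)$ corresponds to a $\pi(G)$-stable subspace $V \le \mathbb{F}_2^p$; since $C_p \le \pi(G)$, $V$ is $C_p$-stable, and as $R$ is a product of fields its ideals are precisely the sums of subsets of the summands, so $V = \bigoplus_{i \in S}(g_i)$ for a unique $S \subseteq \{0,1,\dots,\tfrac{p-1}{d}\}$. The subgroup $\langle M_{\sigma} D_i M_{\sigma}^{-1} : \sigma \in \pi(G),\ i \in S\rangle$ lies in $O_2(G)$ because each $D_i \in (g_i) \subseteq O_2(G)$ and $O_2(G)$ is $\pi(G)$-stable, and it contains each $(g_i)$ (spanned by the $C_p$-conjugates of $D_i$) for $i \in S$, hence contains $\bigoplus_{i \in S}(g_i) = O_2(G)$; the two therefore coincide, which is the claimed formula. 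The one place that needs genuine care is the bookkeeping across the identifications $\operatorname{D}_p(\mathbb{Z}) \cong \mathbb{F}_2^p \cong R$ --- in particular checking that conjugation by $M_{\sigma}$ is the permutation action and that the sign convention reads off exactly the coefficients of $g_i$ --- after which the statement follows from the Chinese Remainder decomposition already recorded in Lemma~\ref{C_p-stable subspaces of F_2^p}, with no new ideas required.
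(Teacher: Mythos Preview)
Your proposal is correct and follows essentially the same approach as the paper: reduce via Corollary~\ref{O_2(G) is a pi(G) stable subspace} to a $\pi(G)$-stable (hence $C_p$-stable) subspace of $\mathbb{F}_2^p$, invoke the Chinese Remainder decomposition~\eqref{F_2[x]/(x^p-1) decomposition}, and observe that $g_i$ vanishes modulo $f_j$ for $j\neq i$ but not modulo $f_i$, so it generates the $i$-th summand. Your write-up is in fact more explicit than the paper's, spelling out the identifications $\operatorname{D}_p(\mathbb{Z})\cong\mathbb{F}_2^p\cong R$ and verifying both containments for the final equality, whereas the paper's proof leaves these as implicit.
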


\begin{proof}
By Corollary \ref{O_2(G) is a pi(G) stable subspace}, $O_2\left(G\right)$ must correspond to a $\pi\left(G\right)$-stable subspace of $\mathbb{F}_2^p$; in particular, this subspace must be $C_p$-stable, so it must preserve the irreducible summands in (\ref{F_2[x]/(x^p-1) decomposition}).  To identify a representative $g_i$ for the irreducible component $\mathbb{F}_2\left[x\right]/\left(f_i\right)$, we seek a polynomial in $\mathbb{F}_2\left[x\right]$ satisfying $g_i \equiv 0\left(\operatorname{mod }f_j\right)$ for $j \neq i$ and satisfying $g_i \not\equiv 0\left(\operatorname{mod }f_i\right)$.  The $g_i$ given in the statement satisfies these conditions.
\end{proof}

In Table \ref{reps of O_2(G) table}, we illustrate the ideas of Lemma \ref{C_p-stable subspaces of F_2^p} and Corollary \ref{representations of O_2(G)} in four key cases.  We define the all $1$'s column vector 
$$\bmatrix 1 & \cdots & 1\endbmatrix^T =: \bm{1}.$$  
By abuse of notation, we will use the same notation whether viewing $\bm{1}$ as an element of $\mathbb{F}_2^n$ or $\mathbb{Z}^n$, with the context making the meaning clear.   Further, we write 
$$V_{\bm{1}} := \left\lbrace \bm{1}, 0\right\rbrace \subseteq \mathbb{F}_2^n$$ 
and $L_{\bm{1}} \subseteq \mathbb{Z}^n$
for the $G$-lattice generated by $\bm{1} \in \mathbb{Z}^n$.

\begin{defn} \label{support length def}
The \textbf{support} of a vector $v = \bmatrix v_1 & \cdots & v_n\endbmatrix^T$ in $\mathbb{F}_2^n$ or $\mathbb{Z}^n$ is the set 
$$\operatorname{supp}\left(v\right) = \left\lbrace i \in \left\lbrace 1, 2, \dots, n\right\rbrace \ \big| \ v_i \neq 0\right\rbrace.$$  
The \textbf{support length} of $v$ is the quantity $\left|\operatorname{supp}\left(v\right)\right|$.
\end{defn}

\begin{table}
\centering
\caption{Key examples of the correspondence between subsets of $\left\lbrace 0, 1, \dots, \frac{p-1}{d}\right\rbrace$, subspaces of $\mathbb{F}_2^p$, and representations of $O_2\left(G\right) \leq \operatorname{D}_p\left(\mathbb{Z}\right)$ described in Lemma \ref{C_p-stable subspaces of F_2^p} and Corollary \ref{representations of O_2(G)}.}
 \label{reps of O_2(G) table}
\begin{tabular}{|c|c|c|}
\hline & & \\ $S \subseteq \left\lbrace 0, 1, \dots, \frac{p-1}{d}\right\rbrace$ & Subspace of $\mathbb{F}_2^p$ & $O_2\left(G\right) \leq \operatorname{D}_p\left(\mathbb{Z}\right)$ \\ & & \\\hline $\emptyset$ & $0$ & $1$ \\\hline & & \\ $\left\lbrace 0 \right\rbrace$ & $V_{\bm{1}} = \left\lbrace \bm{1}, 0\right\rbrace$ & $\left\lbrace \pm I_p\right\rbrace$ \\ & & \\\hline & & \\ $\left\lbrace 1, 2, \dots, \frac{p-1}{d}\right\rbrace$ & $V_E := \left\lbrace v \in \mathbb{F}_2^p \ \big| \ \left|\operatorname{supp}\left(v\right)\right| \text{ even}\right\rbrace$ & $\operatorname{D}_p\left(\mathbb{Z}\right) \cap \operatorname{SL}_p\left(\mathbb{Z}\right)$ \\ & & \\\hline & & \\ $\left\lbrace 0, 1, \dots, \frac{p-1}{d}\right\rbrace$ & $\mathbb{F}_2^p$ & $\operatorname{D}_p\left(\mathbb{Z}\right)$ \\ & & \\\hline
\end{tabular}
\end{table}

The following proposition gives a useful description of the subspaces in Table \ref{reps of O_2(G) table}.  We write $A_n$ for the alternating group on $n$ letters.

\begin{prop} \label{only S_p-subspaces}
For all $n \geq 3$, the only $A_n$-stable subspaces of $\mathbb{F}_2^n$ are $0$, $\mathbb{F}_2^n$, $V_{\bm{1}} = \left\lbrace \bm{1}, 0\right\rbrace$, and
$$V_E := \left\lbrace v \in \mathbb{F}_2^n \ \big| \ \left|\operatorname{supp}\left(v\right)\right| \text{ even}\right\rbrace.$$  
These four subspaces are also the only $S_n$-stable subspaces of $\mathbb{F}_2^n$ for all $n \geq 3$.
\end{prop}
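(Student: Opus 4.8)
The plan is to reduce the whole statement to a fact about $A_n$-orbits and spans in the permutation module $\mathbb{F}_2^n$, using that the action of $A_n$ is highly transitive. First I would observe that since $A_n \leq S_n$, every $S_n$-stable subspace is automatically $A_n$-stable, and all four listed subspaces $0$, $V_{\bm{1}}$, $V_E$, $\mathbb{F}_2^n$ are visibly $S_n$-stable; hence it suffices to prove the first assertion, that the only $A_n$-stable subspaces of $\mathbb{F}_2^n$ are these four, and the second assertion follows immediately.

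The key elementary lemma I would establish is: for $n \geq 3$ and every $k$ with $1 \leq k \leq n-1$, the $A_n$-orbit of any vector of support length $k$ is the full set of vectors of support length $k$. Indeed, $S_n$ acts transitively on the $k$-subsets $T \subseteq \left\lbrace 1,\dots,n\right\rbrace$ (equivalently, on their characteristic vectors), and the $S_n$-stabilizer of such a $T$ is $S_T \times S_{T^c}$; since $\max\left(|T|,|T^c|\right) \geq 2$ for $n \geq 3$, this stabilizer contains a transposition, so it is not contained in $A_n$, which forces the $A_n$-orbit of the characteristic vector to coincide with its $S_n$-orbit. Alongside this I would record two easy linear-algebra remarks: the vectors of support length $2$ span exactly $V_E$ (each lies in $V_E$, and any even-support vector is a sum of them), and $V_E$ has codimension $1$; moreover, for $1 \leq k \leq n-1$, two characteristic vectors of weight $k$ whose supports differ by a single transposition (for instance $e_{\left\lbrace 1,\dots,k\right\rbrace}$ and $e_{\left\lbrace 2,\dots,k+1\right\rbrace}$, or $e_1$ and $e_2$ when $k=1$) sum to a vector of support length $2$.

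With these in hand the argument is a short dichotomy. Let $W \neq 0$ be an $A_n$-stable subspace. If every nonzero vector of $W$ has support length $n$, then the only such vector is $\bm{1}$, so $W = \left\lbrace 0, \bm{1}\right\rbrace = V_{\bm{1}}$. Otherwise $W$ contains a vector of some support length $k$ with $1 \leq k \leq n-1$; by the orbit lemma $W$ then contains all vectors of support length $k$, hence (by the transposition remark) a vector of support length $2$, hence—since $2 \leq n-1$ as $n \geq 3$—all vectors of support length $2$, and therefore $V_E \subseteq W$. As $V_E$ has codimension $1$, this gives $W = V_E$ or $W = \mathbb{F}_2^n$. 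Combining the cases yields exactly the four claimed subspaces.

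Since the whole proof rests only on transitivity of $A_n$ on $k$-subsets together with a one-line description of $V_E$, I do not expect a serious obstacle; the only points that need care are the boundary values—$k = 1$, and small $n$ such as $n = 3$, where $A_3 \cong C_3$—at which one must check the orbit lemma still applies, and it does precisely because $\max\left(|T|,|T^c|\right) \geq 2$ holds for all $n \geq 3$.
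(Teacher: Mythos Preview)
Your proposal is correct. The overall skeleton matches the paper's proof: both show that any $A_n$-stable subspace $W$ not contained in $\{0,\bm{1}\}$ must contain a vector of support length $2$, hence all of $V_E$, and then conclude from $[\,\mathbb{F}_2^n : V_E\,] = 2$. The difference lies in how that support-$2$ vector is produced. The paper argues directly: given $v$ with $\left|\operatorname{supp}(v)\right| \geq 3$ (and $n \geq 4$), it exhibits an explicit double transposition $\sigma = (i,j)(k,\ell) \in A_n$ with $\left|\operatorname{supp}(v + \sigma v)\right| = 2$, handling support lengths $1$ and $2$ and the case $n = 3$ separately. You instead factor through a clean auxiliary lemma---$A_n$ acts transitively on $k$-subsets for $1 \leq k \leq n-1$ because the $S_n$-stabilizer $S_T \times S_{T^c}$ always contains a transposition---so $W$ contains \emph{all} weight-$k$ vectors, and any two whose supports differ in exactly two places sum to a weight-$2$ vector. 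Your route is slightly more conceptual and handles all $1 \leq k \leq n-1$ and all $n \geq 3$ uniformly, avoiding the paper's case split; the paper's route is more hands-on and avoids the orbit--stabilizer computation. Both are short and elementary.
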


\begin{proof}
We first show that the four claimed subspaces are $S_n$-stable subspaces.  It is clear that $0, V_{\bm{1}}$, and $\mathbb{F}_2^n$ are $S_n$-stable subspaces, and it is clear that the set $V_E$ is $S_n$-stable.  That $V_E$ is a subspace of $\mathbb{F}_2^n$ is a consequence of the following observation for $v, w \in \mathbb{F}_2^n$:
$$\left|\operatorname{supp}\left(v+w\right)\right| = \left|\operatorname{supp}\left(v\right)\right| + \left|\operatorname{supp}\left(w\right)\right| - 2\left|\operatorname{supp}\left(v\right) \cap \operatorname{supp}\left(w\right)\right|.$$
\par
Now, we show that these four subspaces are the only $A_n$-stable subspaces of $\mathbb{F}_2^n$; it will then immediately follow that they are the only $S_n$-stable subspaces as well.  We first observe that any $A_n$-stable subspace of $\mathbb{F}_2^n$ containing one of the standard basis vectors must be all of $\mathbb{F}_2^n$ and that any $A_n$-stable subspace containing a vector with support length $2$ must contain $V_E$. Further, since $\left[\mathbb{F}_2^n : V_E\right] = 2$, it follows that $V_E$ is a maximal proper subspace of $\mathbb{F}_2^n$, so any subspace of $\mathbb{F}_2^n$ containing $V_E$ must be $V_E$ or $\mathbb{F}_2^n$.
\par
Let $v = \bmatrix v_1 & \cdots & v_n \endbmatrix^T \in \mathbb{F}_2^n \setminus \left\lbrace 0, \bm{1}\right\rbrace$ and consider the $A_n$-subspace $\left\langle A_nv\right\rangle \subseteq \mathbb{F}_2^n$ generated by $v$.  If $\left|\operatorname{supp}\left(v\right)\right| \in \left\lbrace 1, 2\right\rbrace$, then $\left\langle A_nv\right\rangle \in \left\lbrace \mathbb{F}_2^n, V_E\right\rbrace$ by the previous paragraph and we are done.  Similarly, if $n = 3$, then every vector in $\mathbb{F}_2^n$ is either $0$, $\bm{1}$, or has support length $1$ or $2$, and we are done by the previous paragraph.  If $n \geq 4$ and $\left|\operatorname{supp}\left(v\right)\right| \geq 3$, then there exist distinct $1 \leq i, j, k, \ell \leq n$ such that $v_i = v_j = v_k = 1$ and $v_{\ell} = 0$.  Let $\sigma = \left(i, j\right)\left(k, \ell\right) \in A_n$ and observe that $\left|\operatorname{supp}\left(v + \sigma v\right)\right| = 2$.  It follows that $\left\langle A_nv\right\rangle$ contains $V_E$, so we are done.
\end{proof}

The following special class of integral vectors will be very important moving forward.

\begin{defn} \label{binary and sparse def}
We call a vector $v \in \mathbb{Z}^n$ whose entries all belong to the set $\left\lbrace 0, 1\right\rbrace$ a \textbf{binary} vector of $\mathbb{Z}^n$.
\end{defn} 

As the following result shows, binary vectors can be used to cook up nice generating sets for $G$-lattices when $G$ is a group of monomial matrices containing all diagonal matrices in $\operatorname{GL}_n\left(\mathbb{Z}\right)$.  We call a lattice $L \subseteq \mathbb{Z}^n$ \textit{primitive} if $L \neq mM$ for any lattice $M \subseteq \mathbb{Z}^n$ and integer $m \geq 2$.

\begin{lem} \label{when O_2(G) is C_2^p}
Let $n \in \mathbb{Z}^+$, let $\operatorname{D}_n\left(\mathbb{Z}\right) \leq G \leq \operatorname{Mon}_n\left(\mathbb{Z}\right)$ such that $\pi\left(G\right) \leq S_n$ contains an $n$-cycle, and let $L \subseteq \mathbb{Z}^n$ be a primitive $G$-lattice.
\begin{enumerate}
\item $L$ is generated by binary vectors in $\mathbb{Z}^n$.
\item If $L \notin \left\lbrace 0, L_{\bm{1}}\right\rbrace$ is a $G$-sublattice of $\mathbb{Z}^n$, then $L$ contains a binary vector $v \in \mathbb{Z}^n$ such that $\left|\operatorname{supp}\left(v\right)\right| \leq \left\lfloor \frac{2n}{3}\right\rfloor$, where $\left\lfloor \cdot \right\rfloor$ is the floor function.
\end{enumerate}
\end{lem}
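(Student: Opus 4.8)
The plan is to pin down $L$ modulo $2$ and then argue combinatorially; we may assume $L \neq 0$. Since $\operatorname{D}_n(\mathbb{Z}) \leq G$, for each $i$ the diagonal matrix $\epsilon_i$ negating the $i$-th coordinate lies in $G$, so for every $v \in L$ we have $v - \epsilon_i v = 2v_i e_i \in L$. Let $c \geq 1$ generate the ideal $\{x \in \mathbb{Z} : x e_1 \in L\}$, which is nonzero since $L \neq 0$ and $\pi(G)$ is transitive; because $\pi(G)$ is transitive and $-I_n \in \operatorname{D}_n(\mathbb{Z})$, the analogous generator for each axis is again $c$, and $c \mid 2v_i$ for all $v \in L$ and all $i$. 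Therefore $L \subseteq r\mathbb{Z}^n$, where $r = c$ if $c$ is odd and $r = c/2$ if $c$ is even; primitivity of $L$ forces $r = 1$, so $c \in \{1, 2\}$. If $c = 1$ then $L = \mathbb{Z}^n$. If $c = 2$ then $2\mathbb{Z}^n \subseteq L \subsetneq \mathbb{Z}^n$, so $L$ is the full preimage of a subspace $\overline{L} := L/2\mathbb{Z}^n \subseteq \mathbb{F}_2^n$, which is $\pi(G)$-stable (as $\operatorname{D}_n(\mathbb{Z})$ acts trivially mod $2$) and nonzero (otherwise $L = 2\mathbb{Z}^n$ would be imprimitive); one checks that $L_{\bm 1}$ is precisely the preimage of $V_{\bm 1}$.

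For part (a) I read ``generated by binary vectors'' as generated as a $\mathbb{Z}[G]$-module, equivalently spanned over $\mathbb{Z}$ by the vectors of $L$ with entries in $\{0, \pm 1\}$ (these are exactly the $G$-translates of binary vectors, since $\operatorname{D}_n(\mathbb{Z}) \leq G$). If $L = \mathbb{Z}^n$ the standard basis suffices. If $c = 2$, let $L_0 \leq L$ be the $\mathbb{Z}$-span of the $\{0,\pm1\}$-vectors of $L$; it is $G$-stable. Every element of $\overline{L}$ lifts to a binary vector of $\{0,1\}^n$, which lies in $L$ because $2\mathbb{Z}^n \subseteq L$, so $\overline{L}$ is the mod-$2$ image of the binary vectors contained in $L$ and hence $L_0 + 2\mathbb{Z}^n = L$. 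Picking a binary $u \in L$ with $u_i = 1$ (possible since $\overline{L} \neq 0$) gives $u - \epsilon_i u = 2e_i \in L_0$, and transitivity of $\pi(G)$ upgrades this to $2e_j \in L_0$ for every $j$; thus $2\mathbb{Z}^n \subseteq L_0$ and $L_0 = L$. In particular $L_{\bm 1}$ is generated by the single binary vector $\bm 1$.

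For part (b), assume $L \notin \{0, L_{\bm 1}\}$. If $L = \mathbb{Z}^n$, then $e_1 \in L$ has support length $1 \leq \lfloor 2n/3 \rfloor$, where $\lfloor 2n/3 \rfloor \geq 1$ since $n \geq 2$ (for $n = 1$ the hypothesis $L \notin \{0, L_{\bm 1}\}$ is vacuous). Otherwise $\overline{L}$ is a nonzero, proper, $\pi(G)$-stable — hence $C_n$-stable — subspace of $\mathbb{F}_2^n$ with $\overline{L} \neq V_{\bm 1}$. A one-dimensional $C_n$-stable subspace is spanned by a nonzero vector fixed by the $n$-cycle, that is, by $\bm 1$, contradicting $\overline{L} \neq V_{\bm 1}$; hence $\dim_{\mathbb{F}_2} \overline{L} \geq 2$. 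Choose linearly independent $w_1, w_2 \in \overline{L}$ and set $w_3 = w_1 + w_2 \in \overline{L}$. Using the identity $|\operatorname{supp}(x + y)| = |\operatorname{supp}(x)| + |\operatorname{supp}(y)| - 2|\operatorname{supp}(x) \cap \operatorname{supp}(y)|$ recorded earlier, a short computation gives $|\operatorname{supp}(w_1)| + |\operatorname{supp}(w_2)| + |\operatorname{supp}(w_3)| = 2|\operatorname{supp}(w_1) \cup \operatorname{supp}(w_2)| \leq 2n$, so some $w_t$ — necessarily nonzero — has support length at most $2n/3$, hence at most $\lfloor 2n/3 \rfloor$. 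Lifting $w_t$ to the binary vector $v \in \{0,1\}^n$ with $v \equiv w_t \pmod{2}$, we get $v \in L$ (since $2\mathbb{Z}^n \subseteq L$), $v \neq 0$, and $|\operatorname{supp}(v)| = |\operatorname{supp}(w_t)| \leq \lfloor 2n/3 \rfloor$, as required.

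The bulk of the argument is the structural reduction in the first paragraph: once one knows that a nonzero primitive $G$-lattice is either $\mathbb{Z}^n$ or the preimage of a nonzero proper subspace of $\mathbb{F}_2^n$, part (a) becomes bookkeeping with the reduction mod $2$, and part (b) reduces to the one-line inclusion–exclusion estimate on the three nonzero vectors of an $\mathbb{F}_2$-plane, together with the fact that the ``repetition line'' $V_{\bm 1}$ is the unique $C_n$-stable line. One could instead deduce the weight bound in part (b) from the Plotkin bound applied to the cyclic code $\overline{L}$, but the direct estimate above is shorter and avoids external input.
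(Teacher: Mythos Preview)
Your proof is correct and follows essentially the same route as the paper. Both arguments prove (a) by showing $2\mathbb{Z}^n \subseteq L$ via the diagonal matrices and primitivity, and both prove (b) by the same combinatorial observation that among two large-support binary vectors and their sum, one must have support at most $2n/3$; you phrase this via the mod-$2$ quotient $\overline{L}$ and the three nonzero vectors of an $\mathbb{F}_2$-plane, while the paper adds two integer binary vectors and subtracts the $2e_i$'s, but these are the same computation. Your explicit structural reduction ($c\in\{1,2\}$, so $L=\mathbb{Z}^n$ or $L$ is the preimage of a nonzero $\overline{L}\subseteq\mathbb{F}_2^n$) is a bit more systematic and anticipates what the paper proves separately in Corollary~\ref{sublattices when O_2(G) = D_p(Z)}, but the underlying ideas are identical.
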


\begin{proof}
The claim is trivially satisfied for $n = 1$, so we assume that $n \geq 2$.  To prove (a), it suffices to show that $2\mathbb{Z}^n \subseteq L$.  Since $G$ contains a permutation matrix of degree $n$, this further reduces to showing that $2e_i \in L$ for some $1 \leq i \leq n$, where the $e_i$'s are the standard basis vectors for $\mathbb{Z}^n$.  By the primitivity of $L$, there exists $v = \bmatrix v_1 & \cdots & v_n\endbmatrix^T \in L$ such that $v_j = 1$ for some $1 \leq j \leq n$.  The diagonal matrix $D$ with $D_{jj} = 1$ and $D_{ii} = -1$ for all $i \neq j$ is an element of $G$, so $v + Dv = 2e_j \in L$.
\par
Now, assume that $L \notin \left\lbrace 0, L_{\bm{1}}\right\rbrace$ and that (b) does not hold.  By (a), $L$ contains distinct binary vectors $v, w$ with support length at least $\left\lceil \frac{2n}{3}\right\rceil$.  Then, every entry of $v + w$ is a $0$, $1$, or $2$, with at least $\left\lceil \frac{n}{3}\right\rceil$ of the entries being $2$'s. The $2$'s can be eliminated by subtracting $2e_i$, all of which are in $L$ by the proof of (a), for appropriate choices of $i$, yielding a binary vector in $L$ with support length at most $\left\lfloor \frac{2n}{3}\right\rfloor$.
\end{proof}

Our next goal is to understand the primitive $G$-sublattices of $\mathbb{Z}^p$ for groups $G$ satisfying the conditions of Lemma \ref{when O_2(G) is C_2^p}.  This turns out to be extremely similar to our above discussion of the possible representations of $O_2\left(G\right)$.  

\begin{cor} \label{sublattices when O_2(G) = D_p(Z)}
Let $p > 2$ be prime, let $\operatorname{D}_p\left(\mathbb{Z}\right) \leq G \leq \operatorname{Mon}_p\left(\mathbb{Z}\right)$ such that $\pi\left(G\right) \leq S_p$ contains a $p$-cycle, and let $d$ be the multiplicative order of $2$ modulo $p$.  Write $\mathbb{F}_2 = \left\lbrace 0, 1\right\rbrace$ and $x^p - 1 = \prod_{i=0}^{\left(p-1\right)/d}f_i$ in $\mathbb{F}_2\left[x\right]$, with the $f_i$'s as defined in Lemma \ref{factorization of x^p - 1 lemma}, and let $g_0, \dots, g_n$ be as defined in Corollary \ref{representations of O_2(G)}.  For $1 \leq i \leq n$, define the binary column vector $v_i \in \mathbb{Z}^p$ by defining $v_{i_j}$ to be the $x^{j-1}$ coefficient of $g_i$ (viewing the entries of $v_i$ as elements of $\mathbb{Z}$ rather than of $\mathbb{F}_2$).  Write $L_i$ for the $G$-lattice generated by $v_i$.  Then, every primitive $G$-sublattice $L \subseteq \mathbb{Z}^p$ has the form
$$L = \sum_{i \in S} L_i$$
for some $S \subseteq \left\lbrace 0, 1, \dots, \frac{p-1}{d}\right\rbrace$.
\end{cor}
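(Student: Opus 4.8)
The plan is to reduce everything modulo $2$ and transport the problem to the classification of $C_p$-stable subspaces of $\mathbb{F}_2^p$ from Lemma~\ref{C_p-stable subspaces of F_2^p}. Since $\operatorname{D}_p\left(\mathbb{Z}\right) \leq G \leq \operatorname{Mon}_p\left(\mathbb{Z}\right)$ and $\pi\left(G\right)$ contains a $p$-cycle, Lemma~\ref{when O_2(G) is C_2^p}(a) applies to the primitive $G$-lattice $L$ and yields $2\mathbb{Z}^p \subseteq L$; in particular $L$ has full rank, and $\bar{L} := L/2\mathbb{Z}^p \neq 0$, since $\bar{L} = 0$ would give $L = 2\mathbb{Z}^p$, contradicting primitivity. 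Because $\operatorname{D}_p\left(\mathbb{Z}\right)$ acts trivially on $\mathbb{Z}^p/2\mathbb{Z}^p$, the $G$-action on $\mathbb{F}_2^p$ factors through $\pi\left(G\right)$, so $\bar{L}$ is a nonzero $\pi\left(G\right)$-stable --- hence $C_p$-stable --- subspace of $\mathbb{F}_2^p$. The reduction also recovers $L$ faithfully: as $2\mathbb{Z}^p \subseteq L$, the preimage of $\bar{L}$ under $\mathbb{Z}^p \twoheadrightarrow \mathbb{F}_2^p$ is $L + 2\mathbb{Z}^p = L$.

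Next I would pin down, under the isomorphism $\mathbb{F}_2^p \cong R := \mathbb{F}_2[x]/(x^p-1)$ sending $e_j$ to $x^{j-1}$ (so that cyclically shifting coordinates becomes multiplication by $x$), the irreducible $C_p$-stable subspace attached to each factor $f_i$ of Lemma~\ref{factorization of x^p - 1 lemma}. This is the minimal ideal $g_i R$, where $g_i = (x^p-1)/f_i$ as in Corollary~\ref{representations of O_2(G)}: one checks $f_j \mid g_i$ for $j \neq i$ while $g_i$ is a unit modulo $f_i$, so $g_i R$ is exactly the $i$-th summand of the decomposition~(\ref{F_2[x]/(x^p-1) decomposition}). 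By construction $v_i$ is the image of $g_i$ under $R \cong \mathbb{F}_2^p$ (with $v_0 = \bm{1}$, so $L_0 = L_{\bm{1}}$), and hence the reduction of the $G$-lattice $L_i$ contains the $C_p$-subspace generated by $\bar{v}_i$, which is $g_i R$. By Lemma~\ref{C_p-stable subspaces of F_2^p} and the bijection following it, $\bar{L} = \bigoplus_{i \in S} g_i R$ for a unique $S \subseteq \{0, 1, \dots, \frac{p-1}{d}\}$, and $S \neq \emptyset$ since $\bar{L} \neq 0$.

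I would then establish the two inclusions, both of which are short. For $\sum_{i \in S} L_i \subseteq L$: for each $i \in S$ we have $\bar{v}_i \in g_i R \subseteq \bar{L}$, so $v_i \in L$ (as $L$ is the preimage of $\bar{L}$), and since $L$ is a $G$-lattice this forces $L_i \subseteq L$. For $L \subseteq \sum_{i \in S} L_i$: fix $i_0 \in S$; running the sign-flip argument from the proof of Lemma~\ref{when O_2(G) is C_2^p}(a) on the nonzero binary vector $v_{i_0}$, and then applying the permutation matrix $M_\sigma$ of the $p$-cycle $\sigma$ (which lies in $G$ because $\operatorname{D}_p\left(\mathbb{Z}\right) \leq G$), gives $2\mathbb{Z}^p \subseteq L_{i_0} \subseteq \sum_{i \in S} L_i$. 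Given $w \in L$, write $\bar{w} = \sum_{i \in S} \bar{w}_i$ with $\bar{w}_i \in g_i R = \langle C_p \bar{v}_i \rangle$; lifting each $\bar{w}_i$ to an integral $\{0,1\}$-combination $w_i'$ of the vectors $M_\sigma^k v_i \in L_i$ and setting $w' = \sum_{i \in S} w_i' \in \sum_{i \in S} L_i$, we get $w - w' \in 2\mathbb{Z}^p \subseteq \sum_{i \in S} L_i$, hence $w \in \sum_{i \in S} L_i$.

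I expect the only delicate point to be the bookkeeping around primitivity: one must verify that primitivity of $L$ genuinely forces $2\mathbb{Z}^p \subseteq L$ (so the mod-$2$ reduction is both surjective onto $\pi\left(G\right)$-stable subspaces and injective on the lattices in play) and rules out $S = \emptyset$, which is precisely what legitimizes absorbing $w - w'$ into $2\mathbb{Z}^p$ in the last step. Beyond that, the argument is a direct translation of Lemma~\ref{factorization of x^p - 1 lemma}, Lemma~\ref{C_p-stable subspaces of F_2^p}, and Corollary~\ref{representations of O_2(G)} from $\mathbb{F}_2^p$ to $\mathbb{Z}^p$.
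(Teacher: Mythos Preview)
Your proposal is correct and follows essentially the same approach as the paper: the paper's proof is the single line ``This is analogous to Corollary~\ref{representations of O_2(G)}, along with Lemma~\ref{when O_2(G) is C_2^p},'' and your argument is precisely the unpacking of that analogy---use $2\mathbb{Z}^p \subseteq L$ from (the proof of) Lemma~\ref{when O_2(G) is C_2^p}(a) to pass to $\mathbb{F}_2^p$, invoke the classification of $C_p$-stable subspaces from Lemma~\ref{C_p-stable subspaces of F_2^p}, and lift back. Your care in checking $M_\sigma \in G$, in handling $v_0 = \bm{1}$ (which the statement omits), and in verifying $2\mathbb{Z}^p \subseteq L_{i_0}$ for the reverse inclusion fills in exactly the details the paper suppresses.
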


\begin{proof}
This is analogous to Corollary \ref{representations of O_2(G)}, along with Lemma \ref{when O_2(G) is C_2^p}.
\end{proof}

In Table \ref{G-sublattices table}, we explicitly give the correspondence described in Corollary \ref{sublattices when O_2(G) = D_p(Z)} in the same four cases as Table \ref{reps of O_2(G) table}.

\begin{table}
\centering
\caption[Key examples of the correspondence between subsets of $\left\lbrace 0, 1, \dots, \frac{p-1}{d}\right\rbrace$ and $G$-sublattices of $\mathbb{Z}^p$ for finite groups $\operatorname{D}_p\left(\mathbb{Z}\right) \leq G \leq \operatorname{Mon}_p\left(\mathbb{Z}\right)$ that contain a degree $p$ permutation matrix.]{Key examples of the correspondence between subsets of $\left\lbrace 0, 1, \dots, \frac{p-1}{d}\right\rbrace$ and $G$-sublattices of $\mathbb{Z}^p$ for finite groups $\operatorname{D}_p\left(\mathbb{Z}\right) \leq G \leq \operatorname{Mon}_p\left(\mathbb{Z}\right)$ that contain a $p$-cycle.  The vector $\bm{1} \in \mathbb{Z}^p$ is the all $1$'s vector, and $L_{\bm{1}}$ is the $G$-sublattice of $\mathbb{Z}^p$ generated by the $G$-orbit of $\bm{1}$.  See Table \ref{reps of O_2(G) table} or Proposition \ref{only S_p-subspaces} for the definition of $V_E$.}
\label{G-sublattices table}
\begin{tabular}{|c|c|c|}
\hline
& & \\
$S \subseteq \left\lbrace 0, 1, \dots, \frac{p-1}{d}\right\rbrace$ & Subspace of $\mathbb{F}_2^p$ & $G$-sublattice of $\mathbb{Z}^p$ \\ & & \\\hline $\emptyset$ & $0$ & $0$ \\\hline & & \\ $\left\lbrace 0 \right\rbrace$ & $V_{\bm{1}} = \left\lbrace \bm{1}, 0\right\rbrace$ & $L_{\bm{1}}$ \\ & & \\\hline & & \\ $\left\lbrace 1, 2, \dots, \frac{p-1}{d}\right\rbrace$ & $V_E$ & $L_E := \left\lbrace \left[v_i\right] \in \mathbb{Z}^p \ \big| \ 2 \ | \ \sum_{i=1}^p v_i\right\rbrace$ \\ & & \\\hline & & \\ $\left\lbrace 0, 1, \dots, \frac{p-1}{d}\right\rbrace$ & $\mathbb{F}_2^p$ & $\mathbb{Z}^p$ \\ & & \\\hline
\end{tabular}
\end{table}

\begin{cor} \label{generating with p/3 zeros}
Suppose $\operatorname{D}_p\left(\mathbb{Z}\right) \leq G \leq \operatorname{Mon}_p\left(\mathbb{Z}\right)$ satisfies the condition that $\pi\left(G\right) \leq S_p$ contains a $p$-cycle, and let $L \notin \left\lbrace 0, L_{\bm{1}}\right\rbrace$ be a primitive $G$-sublattice of $\mathbb{Z}^p$.  Then, $L$ is generated as a $G$-lattice by the $G$-orbits of at most $\frac{p-1}{d}$ binary vectors in $\mathbb{Z}^p$, each with support length at most $\left\lfloor \frac{2p}{3}\right\rfloor$.
\end{cor}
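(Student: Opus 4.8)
The plan is to work modulo $2$, combining the classification of primitive $G$-sublattices in Corollary \ref{sublattices when O_2(G) = D_p(Z)} with the small-support vectors produced by Lemma \ref{when O_2(G) is C_2^p}(b). The starting observation is that the proof of Lemma \ref{when O_2(G) is C_2^p}(a) shows $2\mathbb{Z}^p \subseteq M$ for every nonzero primitive $G$-sublattice $M \subseteq \mathbb{Z}^p$; hence $M$ is recovered from its reduction $\overline{M} \subseteq \mathbb{F}_2^p$, and for any binary vector $v \in M$ with $\overline{v} \neq 0$ the $G$-lattice $\langle Gv \rangle$ already contains $2\mathbb{Z}^p$, so $\langle Gv\rangle = M$ precisely when the $\pi(G)$-span of $\overline{v}$ equals $\overline{M}$. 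Since $C_p \leq \pi(G)$, and since by Lemma \ref{C_p-stable subspaces of F_2^p} and Corollary \ref{sublattices when O_2(G) = D_p(Z)} the reduction $\overline{L_i}$ is exactly the irreducible $C_p$-submodule of $\mathbb{F}_2^p$ attached to the factor $f_i$ of $x^p - 1$, I would first record: for $i \geq 1$ the lattice $L_i$ is primitive with $L_i \notin \{0, L_{\bm{1}}\}$ (its reduction has dimension $d \geq 2$, whereas $\overline{L_{\bm{1}}} = V_{\bm{1}}$ is one-dimensional), so Lemma \ref{when O_2(G) is C_2^p}(b) yields a nonzero binary $w_i \in L_i$ with $\left|\operatorname{supp}(w_i)\right| \leq \left\lfloor \tfrac{2p}{3}\right\rfloor$, and irreducibility of $\overline{L_i}$ forces $\langle Gw_i\rangle = L_i$.

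By Corollary \ref{sublattices when O_2(G) = D_p(Z)} write $L = \sum_{i \in S} L_i$ with $\emptyset \neq S \subseteq \{0, 1, \dots, \tfrac{p-1}{d}\}$, and note $S \neq \{0\}$ since $L_0 = L_{\bm{1}}$ and $L \neq L_{\bm{1}}$. If $S$ is the whole index set then $L = \mathbb{Z}^p = \langle Ge_1\rangle$ and we are done with the single support-$1$ vector $e_1$. If $0 \notin S$, then $|S| \leq \tfrac{p-1}{d}$ and $L = \langle G\{w_i : i \in S\}\rangle$ is generated by $|S|$ binary vectors of support at most $\left\lfloor \tfrac{2p}{3}\right\rfloor$. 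In the remaining case $0 \in S$ with $S$ proper (so $|S| \leq \tfrac{p-1}{d}$, and $S$ contains some $j \geq 1$), the only difficulty is that the obvious generator $\bm{1}$ of $L_0 = L_{\bm{1}}$ has full support $p$; I would circumvent this by merging the index $0$ with $j$, i.e., by producing a single binary $u \in L_0 + L_j$ of support at most $\left\lfloor \tfrac{2p}{3}\right\rfloor$ whose reduction has nonzero component in both $\overline{L_0}$ and $\overline{L_j}$. Granting such $u$: because $\overline{L_0}$ is the trivial $C_p$-module while $\overline{L_j}$ is a nontrivial irreducible $C_p$-module, these are non-isomorphic irreducibles, so the $\pi(G)$-span of $\overline{u}$ must be $\overline{L_0} \oplus \overline{L_j}$; hence $\langle Gu\rangle = L_0 + L_j$, and $L = \langle Gu\rangle + \sum_{i \in S \setminus \{0,j\}} \langle Gw_i \rangle$ is generated by $1 + (|S| - 2) = |S| - 1 \leq \tfrac{p-1}{d}$ binary vectors of support at most $\left\lfloor \tfrac{2p}{3}\right\rfloor$.

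The heart of the argument is the construction of $u$, and I expect the only non-formal work to lie here. I would set $u := \bm{1} - \widetilde{w}$, where $w$ is a maximal-support nonzero element of the binary cyclic code $\overline{L_j}$ and $\widetilde{w} \in \{0,1\}^p$ is its $0$--$1$ lift; then $\widetilde{w} \in L_j$ (as $2\mathbb{Z}^p \subseteq L_j$ and $\overline{\widetilde{w}} = w \in \overline{L_j}$), so $u \in L_0 + L_j$ is binary with $\left|\operatorname{supp}(u)\right| = p - \left|\operatorname{supp}(w)\right|$. The key estimate is $\left|\operatorname{supp}(w)\right| \geq \left\lceil \tfrac{p}{3}\right\rceil$: since $\overline{L_j}$ is a cyclic code of prime length $p$ and positive dimension it has no identically-zero coordinate, so each coordinate is nonzero in exactly $2^{d-1}$ of its $2^d$ codewords; thus the nonzero codewords have total support $p\,2^{d-1}$ and average support $p\,2^{d-1}/(2^d - 1) > \tfrac{p}{2} \geq \left\lceil \tfrac{p}{3}\right\rceil$ (using $p \geq 3$), forcing some nonzero codeword to have support at least $\left\lceil \tfrac{p}{3}\right\rceil$. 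Hence $\left|\operatorname{supp}(u)\right| = p - \left|\operatorname{supp}(w)\right| \leq p - \left\lceil \tfrac{p}{3}\right\rceil = \left\lfloor \tfrac{2p}{3}\right\rfloor$, and since $\overline{u} = \overline{\bm{1}} + \overline{w}$ with $\overline{\bm{1}} \notin \overline{L_j}$ (so the $\overline{L_0}$-component is $\overline{\bm{1}} \neq 0$) and $\overline{w} \neq 0$, the vector $u$ has the required property. This averaging estimate is the one genuinely combinatorial ingredient; the rest of the proof is bookkeeping with the isotypic decomposition of $\mathbb{F}_2^p$ under $C_p$.
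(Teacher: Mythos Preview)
Your argument is correct and follows the same skeleton as the paper's proof: decompose $L = \sum_{i\in S} L_i$ via Corollary~\ref{sublattices when O_2(G) = D_p(Z)}, dispose of the case $S = \{0,1,\dots,\tfrac{p-1}{d}\}$ with a standard basis vector, and otherwise generate each summand $L_i$ by a single small-support binary vector supplied by Lemma~\ref{when O_2(G) is C_2^p}(b).

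Where you genuinely go further is the case $0 \in S$ with $S$ proper. The paper's proof invokes Lemma~\ref{when O_2(G) is C_2^p}(b) uniformly for ``any $0 \leq i \leq \tfrac{p-1}{d}$,'' but that lemma explicitly excludes $L_{\bm{1}}$, and indeed the only nonzero binary vector in $L_0 = L_{\bm{1}}$ is $\bm{1}$ itself, of support $p > \lfloor \tfrac{2p}{3}\rfloor$. So the paper's proof, read literally, has a gap in this case. Your merging device---replacing the pair $L_0, L_j$ by a single generator $u = \bm{1} - \widetilde{w}$ of $L_0 + L_j$, with $w$ a maximum-weight nonzero codeword of the cyclic code $\overline{L_j}$---closes that gap. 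The averaging argument you use (each coordinate of a nonzero cyclic code of dimension $d$ is hit by exactly $2^{d-1}$ codewords, forcing a codeword of weight $> p/2$) is the one extra combinatorial ingredient, and it is sound; in fact it gives the stronger bound $\lvert\operatorname{supp}(u)\rvert \leq \tfrac{p-1}{2}$. The remaining verification that $\langle Gu\rangle = L_0 + L_j$ via non-isomorphism of the trivial and nontrivial irreducible $C_p$-constituents is standard and correct.
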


\begin{proof}
If the subset $S \subseteq \left\lbrace 0, 1, \dots, \frac{p-1}{d}\right\rbrace$ corresponding to $L$ in Corollary \ref{sublattices when O_2(G) = D_p(Z)} is $S = \left\lbrace 0, 1, \dots, \frac{p-1}{d}\right\rbrace$, then $L = \mathbb{Z}^p$, which is generated by the $G$-orbit of any of the standard basis vectors of $\mathbb{Z}^p$.  Otherwise, $\left|S\right| \leq \frac{p-1}{d}$.  For any $0 \leq i \leq \frac{p-1}{d}$, the $G$-lattice $L_i$ (as in Corollary \ref{sublattices when O_2(G) = D_p(Z)}) is generated by the $G$-orbit of any of its nonzero binary elements, as the corresponding ideal in $\mathbb{F}_2\left[x\right]/\left(x^p - 1\right)$ (see (\ref{F_2[x]/(x^p-1) decomposition})) is a field.  The support length condition then follows from (b) of Lemma \ref{when O_2(G) is C_2^p}.
\end{proof}

We can bound the orbit size of a binary vector under a subgroup of $\operatorname{Mon}_n\left(\mathbb{Z}\right)$ as a function of its support length and $\left|\pi\left(G\right)\right|$.

\begin{lem} \label{orbit size of binary vector}
Let $n \in \mathbb{Z}^+$, let $\operatorname{D}_n\left(\mathbb{Z}\right) \leq G \leq \operatorname{Mon}_n\left(\mathbb{Z}\right)$, and let $\pi:  G \to S_n$ be the projection map.  Then, for any binary vector $v \in \mathbb{Z}^n$, we have $\left|Gv\right| \leq 2^{\left|\operatorname{supp}\left(v\right)\right|}\left|\pi\left(G\right)v\right| \leq 2^{\left|\operatorname{supp}\left(v\right)\right|}\left|\pi\left(G\right)\right|$.
\end{lem}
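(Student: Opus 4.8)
The plan is to bound $|Gv|$ by analyzing the action of $G$ on the orbit $Gv$ through the short exact sequence $1 \to \operatorname{D}_n(\mathbb{Z}) \cap G \to G \to \pi(G) \to 1$. Since $\operatorname{D}_n(\mathbb{Z}) \leq G$ by hypothesis, we in fact have $G \cong \operatorname{D}_n(\mathbb{Z}) \rtimes \pi(G) \cong C_2^n \rtimes \pi(G)$, so every element of $G$ can be written uniquely as $DM_\sigma$ with $D \in \operatorname{D}_n(\mathbb{Z})$ and $\sigma \in \pi(G)$. The key observation is that applying $DM_\sigma$ to a binary vector $v$ first permutes the support of $v$ according to $\sigma$, then flips signs according to $D$. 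Thus the support of any vector in $Gv$ is $\sigma(\operatorname{supp}(v))$ for some $\sigma \in \pi(G)$, and once we fix the support (equivalently, fix the $\pi(G)$-orbit position $\sigma \cdot \operatorname{supp}(v)$, of which there are $|\pi(G)v| \leq |\pi(G)\operatorname{supp}(v)| \leq |\pi(G)|$ choices — note $\pi(G)v$ and $\pi(G)\operatorname{supp}(v)$ have the same size since $v$ is binary), the diagonal matrix $D$ can only affect the signs in those $|\operatorname{supp}(v)|$ nonzero positions, giving at most $2^{|\operatorname{supp}(v)|}$ further possibilities.

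More carefully, first I would fix a vector $w \in Gv$ and count how many elements of $G$ send $v$ to a vector with the same support as $w$; equivalently I would partition $Gv$ according to the support of the resulting vector. For each achievable support set $T = \sigma(\operatorname{supp}(v))$, the vectors in $Gv$ with support exactly $T$ are obtained from a fixed such vector by left-multiplication by diagonal sign matrices, and since only the $|T| = |\operatorname{supp}(v)|$ coordinates in $T$ are nonzero, there are at most $2^{|\operatorname{supp}(v)|}$ of them. The number of distinct supports that arise is exactly $|\pi(G) \cdot \operatorname{supp}(v)|$, which because $v$ is binary equals $|\pi(G)v|$ — distinct images under $\pi(G)$ acting on $v$ correspond to distinct permuted supports. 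Combining, $|Gv| \leq 2^{|\operatorname{supp}(v)|} |\pi(G)v|$. The final inequality $|\pi(G)v| \leq |\pi(G)|$ is immediate since an orbit has size at most the group order.

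I expect no serious obstacle here; the only point requiring a touch of care is the identification $|\pi(G)v| = |\pi(G)\operatorname{supp}(v)|$, which uses that $v$ is a $\{0,1\}$-vector (if $v$ had general entries, two permutations could give the same support but different vectors). Everything else is a routine orbit-counting argument using the semidirect product structure $G = \operatorname{D}_n(\mathbb{Z}) \rtimes \pi(G)$ guaranteed by the hypothesis $\operatorname{D}_n(\mathbb{Z}) \leq G \leq \operatorname{Mon}_n(\mathbb{Z})$.
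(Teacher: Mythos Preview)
Your proposal is correct and follows essentially the same approach as the paper: decompose each element of $G$ as a diagonal sign matrix times a permutation matrix, and count orbit elements by first accounting for the permutation of the support and then the sign flips on the $|\operatorname{supp}(v)|$ nonzero entries. Your write-up is actually more careful than the paper's two-sentence proof, in that you explicitly justify the intermediate bound $|Gv| \leq 2^{|\operatorname{supp}(v)|}|\pi(G)v|$ via the identification $|\pi(G)v| = |\pi(G)\cdot\operatorname{supp}(v)|$ for binary $v$, whereas the paper's argument jumps straight to the weaker bound $2^{|\operatorname{supp}(v)|}|\pi(G)|$.
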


\begin{proof}
The number of ways to permute the nonzero entries of $v$ using elements of $G$ is at most $\left|\pi\left(G\right)\right|$.  Since $\operatorname{D}_n\left(\mathbb{Z}\right) \leq G$, the number of ways to negate the nonzero entries in $v$ using elements of $G$ is $2^{\left|\operatorname{supp}\left(v\right)\right|}$.
\end{proof}



We are now ready to prove symmetric rank bounds.

\begin{prop} \label{pi(G) has only 4 subspaces}
Let $p \geq 7$ be a prime number and let $G \leq \operatorname{Mon}_p\left(\mathbb{Z}\right)$ be an irreducible group of monomial matrices satisfying $-I_p \in G$ and $\left|O_2\left(G^+\right)\right| > 1$, and let $\pi:  G \to S_p$ be the projection map.  If the only $\pi\left(G\right)$-stable subspaces of $\mathbb{F}_2^p$ are those in Proposition \ref{only S_p-subspaces}, then for any $G$-lattice $L \subseteq \mathbb{Z}^p$, we have $\operatorname{symrank}\left(L, G\right) \leq 2^p$.
\end{prop}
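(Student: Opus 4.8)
The plan is to reduce to the case where $L$ is primitive, use the subspace hypothesis to force $\operatorname{D}_p\left(\mathbb{Z}\right) \leq G$, classify the possible $L$ into three explicit lattices, and bound the symmetric rank of each by exhibiting a small generating $G$-orbit.

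First I would reduce to $L$ primitive. Since $G$ is irreducible, every nonzero $G$-sublattice of $\mathbb{Z}^p$ has full rank $p$; writing such an $L$ as $mL'$ with $m$ maximal and $L'$ primitive, multiplication by $m$ is a $\mathbb{Z}\left[G\right]$-isomorphism $L' \to L$, so $\operatorname{symrank}\left(L,G\right) = \operatorname{symrank}\left(L',G\right)$ because symmetric rank is a $\mathbb{Z}$-equivalence invariant (Lemma \ref{symmetric rank facts}); the case $L = 0$ is trivial, so assume $L$ is primitive and nonzero. Next I would pin down $O_2\left(G\right)$: since $-I_p$ is central of order $2$ it lies in $O_2\left(G\right)$, so by Observation \ref{when -I_n in G}(c) and the hypothesis $\left|O_2\left(G^+\right)\right| > 1$ we have $\left|O_2\left(G\right)\right| = 2\left|O_2\left(G^+\right)\right| \geq 4$. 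By Lemma \ref{O_2(G) consists of diagonal matrices} and Corollary \ref{O_2(G) is a pi(G) stable subspace}, $O_2\left(G\right) \leq \operatorname{D}_p\left(\mathbb{Z}\right)$ corresponds to a $\pi\left(G\right)$-stable subspace $W \subseteq \mathbb{F}_2^p$ containing $\bm{1}$ (as $-I_p \in O_2\left(G\right)$). Since $p$ is odd, $\bm{1} \notin V_E$, so by hypothesis $W \in \left\lbrace V_{\bm{1}}, \mathbb{F}_2^p\right\rbrace$; as $\left|W\right| = \left|O_2\left(G\right)\right| \geq 4$, we conclude $W = \mathbb{F}_2^p$, i.e. $\operatorname{D}_p\left(\mathbb{Z}\right) \leq G$.

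Now $\operatorname{D}_p\left(\mathbb{Z}\right) \leq G \leq \operatorname{Mon}_p\left(\mathbb{Z}\right)$ with $\pi\left(G\right)$ containing a $p$-cycle, so the proof of Lemma \ref{when O_2(G) is C_2^p}(a) gives $2\mathbb{Z}^p \subseteq L$; hence $L$ is the preimage in $\mathbb{Z}^p$ of the subspace $L/2\mathbb{Z}^p \subseteq \mathbb{F}_2^p$, which is $\pi\left(G\right)$-stable because $\operatorname{D}_p\left(\mathbb{Z}\right)$ acts trivially modulo $2$. By the hypothesis this subspace lies in $\left\lbrace 0, V_{\bm{1}}, V_E, \mathbb{F}_2^p\right\rbrace$, and since $L$ is primitive and nonzero it is not $0$; therefore $L \in \left\lbrace L_{\bm{1}},\, L_E,\, \mathbb{Z}^p\right\rbrace$, where $L_E := \left\lbrace v \in \mathbb{Z}^p : \sum_i v_i \text{ even}\right\rbrace$ is the preimage of $V_E$. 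I would then bound each case by a generating orbit: for $\mathbb{Z}^p$, the orbit $Ge_1 \subseteq \left\lbrace \pm e_1, \dots, \pm e_p\right\rbrace$ has size at most $2p$ and generates $\mathbb{Z}^p$ (since $\pi\left(G\right)$ is transitive and $\operatorname{D}_p\left(\mathbb{Z}\right) \leq G$); for $L_{\bm{1}}$, the orbit $G\bm{1}$ is exactly the set of $2^p$ sign vectors and generates $L_{\bm{1}}$ by definition; for $L_E$, the orbit $G\left(e_1 + e_2\right)$ has size at most $2^2\binom{p}{2} = 2p\left(p-1\right)$ by Lemma \ref{orbit size of binary vector}, and it generates $L_E$ because $\left\langle G\left(e_1+e_2\right)\right\rangle$ is a nonzero primitive $G$-sublattice of $L_E$ whose reduction modulo $2$ contains a vector of support length $2$, hence by the hypothesis equals $V_E$ (it is neither $V_{\bm{1}}$ nor $\mathbb{F}_2^p$), so $\left\langle G\left(e_1+e_2\right)\right\rangle = L_E$. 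Since $2p \leq 2^p$ and $2p\left(p-1\right) < 2^p$ for $p \geq 7$, in every case $\operatorname{symrank}\left(L, G\right) \leq 2^p$.

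The step I expect to be the crux is forcing $\operatorname{D}_p\left(\mathbb{Z}\right) \leq G$: the general orbit bounds available (e.g. Corollary \ref{generating with p/3 zeros} with Lemma \ref{orbit size of binary vector}) carry factors of $\left|\pi\left(G\right)\right|$, which can be as large as $p!$, so they are useless on their own; the argument works only because the subspace hypothesis collapses the list of primitive $G$-sublattices to the three lattices above, each admitting a genuinely small generating orbit. The one remaining subtlety is checking that $G\left(e_1+e_2\right)$ generates all of $L_E$ rather than a proper sublattice, which the reduction-modulo-$2$ argument settles cleanly. Finally, note that the inequality $2p\left(p-1\right) < 2^p$ fails at $p = 5$ (consistently with $\operatorname{rdim}_{\operatorname{irr}}\left(5\right) = 40 > 2^5$ from Theorem \ref{low dimensions theorem}), which is precisely why the hypothesis $p \geq 7$ is needed.
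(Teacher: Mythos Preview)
Your proof is correct and follows essentially the same route as the paper: first force $O_2\left(G\right) = \operatorname{D}_p\left(\mathbb{Z}\right)$ via the subspace hypothesis, then classify the primitive $G$-sublattices as $\mathbb{Z}^p$, $L_E$, $L_{\bm{1}}$, and exhibit for each a $G$-stable generating set of size at most $2^p$ using the vectors $e_1$, $e_1+e_2$, $\bm{1}$. The only cosmetic difference is that the paper uses the full $\operatorname{Mon}_p\left(\mathbb{Z}\right)$-orbit of $v$ (which is automatically $G$-stable and visibly generates) rather than the $G$-orbit, thereby avoiding your separate verification that $\left\langle G\left(e_1+e_2\right)\right\rangle = L_E$; your reduction-modulo-$2$ argument for that step is perfectly fine.
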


\begin{proof}
First, we show that $O_2\left(G\right) = \operatorname{D}_p\left(\mathbb{Z}\right)$.  By Corollary \ref{O_2(G) is a pi(G) stable subspace}, we know that $O_2\left(G\right)$ must correspond to a $\pi\left(G\right)$-stable subspace of $\mathbb{F}_2^p$.  Given our assumptions on these subspaces, the only possibilities for $O_2\left(G\right)$ are those in Table \ref{reps of O_2(G) table}.  Out of these, the only possibilities satisfying the conditions $-I_p \in G$ and $\left|O_2\left(G^+\right)\right| > 1$ is $\operatorname{D}_p\left(\mathbb{Z}\right)$.
\par
We may then apply (a) of Lemma \ref{when O_2(G) is C_2^p}, and Table \ref{G-sublattices table} shows that the only possible nontrivial $G$-sublattices of $\mathbb{Z}^p$ are $\mathbb{Z}^p, L_E$, and $L_{\bm{1}}$.  Thus, it suffices to find an element $v$ of each of these $G$-lattices whose $\operatorname{Mon}_p\left(\mathbb{Z}\right)$-orbit spans the lattice and satisfies $\left|\operatorname{Mon}_p\left(\mathbb{Z}\right)v\right| \leq 2^p$, as $\left|Gv\right| \leq \left|\operatorname{Mon}_p\left(\mathbb{Z}\right)v\right|$.  For $\mathbb{Z}^p$, we may choose $v$ to be any standard basis vector $e_i$ of $\mathbb{Z}^p$, in which case we have $\left|\operatorname{Mon}_p\left(\mathbb{Z}\right)v\right| = 2p$.  For $L_E$, we may choose any $v = e_i + e_j$ for distinct standard basis vectors $e_i, e_j \in \mathbb{Z}^p$, in which case we have $\left|\operatorname{Mon}_p\left(\mathbb{Z}\right)v\right| = \binom{p}{2} \cdot 2^2 = 2p\left(p-1\right) < 2^p$ (as $p \geq 7$).  For $L_{\bm{1}}$, we choose $v = \bm{1}$, which satisfies $\left|\operatorname{Mon}_p\left(\mathbb{Z}\right)v\right| = 2^p$.
\end{proof}

Proposition \ref{pi(G) has only 4 subspaces} now immediately gives the following result, which applies to conjecturally infinitely many primes by Artin's Conjecture on primitive roots \cite[\S 1]{HeathBrown}.

\begin{cor} \label{2 a primitive root}
Suppose $p \geq 7$ is a prime such that $2$ is a primitive root modulo $p$, and suppose $G < \operatorname{GL}_p\left(\mathbb{Z}\right)$ is an irreducible finite group satisfying (a) of Proposition \ref{Plesken prop}.  Then, for any $G$-lattice $L \subseteq \mathbb{Z}^p$, we have $\operatorname{symrank}\left(L, G\right) \leq 2^p$.
\end{cor}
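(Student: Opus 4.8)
The plan is to use the hypothesis on $2$ to collapse the case analysis of this section and then to chain Propositions~\ref{reduction to monomial matrices} and~\ref{pi(G) has only 4 subspaces}. First I would record the arithmetic input: since $2$ is a primitive root modulo $p$, the multiplicative order $d$ of $2$ modulo $p$ equals $p-1$, so $\frac{p-1}{d} = 1$. By Lemma~\ref{C_p-stable subspaces of F_2^p}, $\mathbb{F}_2^p$ then has exactly $\frac{p-1}{d}+1 = 2$ irreducible $C_p$-stable subspaces — one for the factor $f_0 = x-1$ of $x^p-1$ over $\mathbb{F}_2$ and one for the remaining factor $f_1$ of degree $p-1$ — and hence, via the correspondence with subsets of $\{0,1\}$, at most $2^2 = 4$ $C_p$-stable subspaces in all. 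Since $0$, $V_{\bm{1}}$, $V_E$, and $\mathbb{F}_2^p$ are four distinct $C_p$-stable subspaces by Proposition~\ref{only S_p-subspaces}, these are all of them; in particular, any subgroup of $S_p$ containing a $p$-cycle has no stable subspace of $\mathbb{F}_2^p$ other than the four appearing in Proposition~\ref{only S_p-subspaces}.

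Next I would pass to the monomial picture. The $\operatorname{GL}_p(\mathbb{Q})$-conjugation argument proving Proposition~\ref{reduction to monomial matrices} applies to an arbitrary $G$-lattice $L \subseteq \mathbb{Z}^p$, not merely to $\mathbb{Z}^p$: by case (a) of Proposition~\ref{Plesken prop} together with $-I_p \in G$ there is $A \in \operatorname{GL}_p(\mathbb{Q})$ conjugating $G$ to an irreducible $H \leq \operatorname{Mon}_p(\mathbb{Z})$ with $-I_p \in H$, and rescaling $AL$ into $\mathbb{Z}^p$ yields an $H$-lattice $L' \subseteq \mathbb{Z}^p$ with $\operatorname{symrank}(L,G) = \operatorname{symrank}(L',H)$, since symmetric rank depends only on the isomorphism class of a lattice equipped with a group action. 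This $H$ is $\operatorname{GL}_p(\mathbb{Q})$-conjugate to $G$, so $|O_2(H^+)| = |O_2(G^+)| = 2^{d\ell} > 1$ by case (a) of Proposition~\ref{Plesken prop}; and $\pi(H)$ contains a $p$-cycle, because the $\mathbb{Q}$-span of the standard basis vectors indexed by any $\pi(H)$-orbit is $H$-stable, so the irreducibility of $H$ forces $\pi(H)$ to be transitive on $p$ points and hence $p \mid |\pi(H)|$. By the first paragraph the only $\pi(H)$-stable subspaces of $\mathbb{F}_2^p$ are the four of Proposition~\ref{only S_p-subspaces}, so every hypothesis of Proposition~\ref{pi(G) has only 4 subspaces} holds for $H$, and that proposition gives $\operatorname{symrank}(L',H) \leq 2^p$, whence $\operatorname{symrank}(L,G) \leq 2^p$.

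The substantive work is already done in Propositions~\ref{reduction to monomial matrices} and~\ref{pi(G) has only 4 subspaces}, so I do not expect a serious obstacle: this statement is genuinely a corollary. The two points to get right are (i) that ``$2$ is a primitive root modulo $p$'' is precisely the condition making $\frac{p-1}{d}+1 = 2$, so that Lemma~\ref{C_p-stable subspaces of F_2^p} leaves no $\pi(G)$-stable subspace outside Proposition~\ref{only S_p-subspaces}; and (ii) the bookkeeping that carries the pair $(L,G)$, with $L \subseteq \mathbb{Z}^p$ an arbitrary $G$-lattice, to a monomial pair $(L',H)$ of the same symmetric rank while preserving the standing hypotheses of Proposition~\ref{pi(G) has only 4 subspaces} (that $-I_p \in H$, that $|O_2(H^+)| > 1$, and that $\pi(H)$ contains a $p$-cycle). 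Both are routine consequences of the structure statement in case (a) of Proposition~\ref{Plesken prop}, of Proposition~\ref{Q-equivalence observations}, and of the fact that both the order of $O_2$ of a finite group and the symmetric rank of a lattice with group action are isomorphism invariants.
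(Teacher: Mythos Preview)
Your proposal is correct and follows essentially the same route as the paper: the key observation is that $d = p-1$ forces, via Lemma~\ref{C_p-stable subspaces of F_2^p}, the $C_p$-stable (hence $\pi(G)$-stable) subspaces of $\mathbb{F}_2^p$ to be exactly the four of Proposition~\ref{only S_p-subspaces}, after which Proposition~\ref{pi(G) has only 4 subspaces} finishes. You have simply spelled out the reduction to the monomial picture (Proposition~\ref{reduction to monomial matrices}) and the verification of the standing hypotheses more carefully than the paper, which treats these as understood.
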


\begin{proof}
Since the multiplicative order of $2$ in $\mathbb{F}_p^{\times}$ is $d = p - 1$, by Lemma \ref{C_p-stable subspaces of F_2^p} the only possible $C_p$-stable subspaces (and, thus, the only possible $\pi\left(G\right)$-stable subspaces) of $\mathbb{F}_2^p$ are the four listed in Proposition \ref{only S_p-subspaces}.
\end{proof}

For more general prime dimensions $p$, we consider the various possibilities for $\pi\left(G\right) \leq S_p$.  Since $\pi\left(G\right)$ contains a $p$-cycle by assumption, we know that $\pi\left(G\right)$ is a transitive permutation group of prime degree, which turns out to be quite restrictive.

\begin{thm} \cite[p. 177]{Burnside} \cite[\S 7.3, \S 7.4]{Cameron} \label{transitive subgroups of S_p}
If $P \leq S_p$ is a transitive permutation group of prime degree $p$, then $P$ satisfies one of the following.
\begin{enumerate}
\item $P$ is the symmetric group $S_p$.
\item $P$ is the alternating group $A_p$.
\item $P$ is a metacyclic group of order dividing $p\left(p-1\right)$.
\item There exist a prime power $q = u^t$ and $m \in \mathbb{Z}^+$, with $\left(m, q\right) \notin \left\lbrace \left(2, 2\right), \left(2, 3\right)\right\rbrace$, such that $p = \frac{q^m-1}{q-1}$ and the socle of $P$ is isomorphic to the $m$-dimensional projective special linear group $\operatorname{PSL}_m\left(q\right)$ over the finite field with $q$ elements.  In this case, 
$$\left|P\right| \leq \left|\operatorname{PSL}_m\left(q\right)\right|\gcd\left(m, q-1\right)t.$$
\end{enumerate}
\end{thm}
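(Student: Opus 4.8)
The statement combines two classical ingredients, and the plan is to organize the argument around them: Burnside's theorem that a transitive permutation group of prime degree is either two-transitive or solvable, and the classification of finite two-transitive permutation groups (a consequence of the classification of finite simple groups, CFSG). I would not reprove either of these; the work is to split into the appropriate cases and then read off from the classification exactly which two-transitive groups can have prime degree, together with the relevant order bound.

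\emph{The solvable case.} Since the $p$-part of $\left|S_p\right|$ is exactly $p$, any transitive $P \leq S_p$ contains a Sylow $p$-subgroup $Q = \left\langle \sigma\right\rangle$ of order $p$ acting regularly. Moreover $P$ is primitive, since a nontrivial block system would partition the $p$ points into blocks of equal size strictly between $1$ and $p$, impossible for $p$ prime. If $P$ is solvable then, being primitive, it has a unique minimal normal subgroup, which is elementary abelian and (being a nontrivial normal subgroup of a primitive group, hence transitive, and abelian, hence regular) of order $p$; thus it equals $Q \cong C_p$. Therefore $P \leq N_{S_p}\left(Q\right) = \operatorname{AGL}_1\left(\mathbb{F}_p\right) \cong C_p \rtimes C_{p-1}$, a metacyclic group of order $p\left(p-1\right)$, and $P \geq Q$ forces $\left|P\right| = pd$ with $d \mid p-1$. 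This is case (c).

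\emph{The two-transitive case.} A two-transitive group $P$ has a unique minimal normal subgroup $N$. If $N$ is elementary abelian, it is regular of order $p^k$, so $k = 1$ and we again land in $P \leq \operatorname{AGL}_1\left(\mathbb{F}_p\right)$, i.e.\ case (c). Otherwise $N = T = \operatorname{soc}\left(P\right)$ is nonabelian simple with $T \trianglelefteq P \leq \operatorname{Aut}\left(T\right)$ and $T$ itself two-transitive of degree $p$. At this point I would run through the CFSG-based list of two-transitive almost simple groups (for instance Cameron's tabulation) and extract those whose degree is prime: the natural actions of $A_p$ and $S_p$ give (a) and (b); the action of $\operatorname{PSL}_m\left(q\right)$ on the $\frac{q^m-1}{q-1}$ points of projective $\left(m-1\right)$-space over $\mathbb{F}_q$ gives (d) whenever that number is prime, with $\left(m, q\right) \in \left\lbrace \left(2,2\right), \left(2,3\right)\right\rbrace$ excluded precisely because these are the parameters for which $\operatorname{PSL}_m\left(q\right)$ fails to be simple; and one verifies that the remaining (finitely many, small) exceptional two-transitive actions of prime degree are accounted for. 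For the order bound in (d), note that the graph automorphism of $\operatorname{PSL}_m\left(q\right)$ interchanges points and hyperplanes and hence does not preserve the action on points, so $P$ embeds into $\operatorname{P\Gamma L}_m\left(q\right)$, and $\left|\operatorname{P\Gamma L}_m\left(q\right)\right| = \left|\operatorname{PGL}_m\left(q\right)\right| \cdot t = \left|\operatorname{PSL}_m\left(q\right)\right| \cdot \gcd\left(m, q-1\right) \cdot t$ when $q = u^t$, which gives the claimed inequality.

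The essential difficulty is concentrated entirely in the two inputs: Burnside's dichotomy (provable by elementary character theory and transfer, but which I would simply cite) and, above all, the classification of two-transitive groups, which rests on CFSG. Granting those, what remains is bookkeeping — checking primitivity and the structure of $N_{S_p}\left(C_p\right)$ in the solvable case, and, in the two-transitive case, going through the known list to confirm that the prime-degree examples are exactly the alternating, symmetric, and projective families (plus a handful of small exceptions), and computing the outer automorphism group that controls the order bound in (d).
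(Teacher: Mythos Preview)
The paper does not give its own proof of this theorem; it is quoted with citations to Burnside and to Cameron as a classical result, and is used as a black box in the proof of Theorem~\ref{p-1/a theorem}. Your sketch is exactly the standard argument one finds in those sources: Burnside's dichotomy (solvable versus two-transitive) handles case~(c) via the normalizer computation $N_{S_p}\left(C_p\right) \cong \operatorname{AGL}_1\left(\mathbb{F}_p\right)$, and the two-transitive case is then read off from the CFSG-based classification of two-transitive groups. Your derivation of the order bound in~(d), by observing that the graph automorphism of $\operatorname{PSL}_m\left(q\right)$ (for $m \geq 3$) does not stabilize the point action and hence $P \leq \operatorname{P\Gamma L}_m\left(q\right)$, is also the standard one.

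One small remark on the ``handful of small exceptions'' you mention in your final paragraph: the two-transitive groups of prime degree with nonabelian simple socle also include the exceptional $11$-point action of $\operatorname{PSL}_2\left(11\right)$ and the Mathieu groups $M_{11}$ and $M_{23}$ in their natural actions, none of which are literally of the form described in~(d). These all occur only for $p \in \left\lbrace 11, 23\right\rbrace$, primes that the paper treats separately via the explicit computations of Section~\ref{small dimensions chapter}, so the omission is immaterial for the paper's purposes; but your phrase ``are accounted for'' is slightly optimistic if read as asserting that the list (a)--(d) is exhaustive on its own.
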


Because of (d) in Theorem \ref{transitive subgroups of S_p}, the following formula will be useful.

\begin{prop} \cite[\S 3.3.1]{Wilson} \label{order of PSL(m,q)}
For all $m \geq 2$ and prime powers $q$, we have
$$\left|\operatorname{PSL}_m\left(q\right)\right| = \frac{q^{m\left(m-1\right)/2}}{\gcd\left(m,q-1\right)}\prod_{i=2}^m \left(q^i - 1\right).$$
\end{prop}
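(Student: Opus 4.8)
The plan is to derive the formula from the classical order of the general linear group by passing first to $\operatorname{SL}_m$ via the determinant and then to $\operatorname{PSL}_m$ by quotienting out the center. First I would count ordered bases of the vector space $\mathbb{F}_q^m$ to compute $\left|\operatorname{GL}_m\left(q\right)\right|$: the first basis vector may be any of the $q^m - 1$ nonzero vectors, and having chosen the first $i - 1$ vectors, the $i$-th may be any vector outside their $\left(i-1\right)$-dimensional span, giving $q^m - q^{i-1}$ choices. Hence
$$\left|\operatorname{GL}_m\left(q\right)\right| = \prod_{i=1}^m \left(q^m - q^{i-1}\right) = q^{m\left(m-1\right)/2}\prod_{i=1}^m \left(q^i - 1\right),$$
where the power of $q$ is collected as $q^{0 + 1 + \cdots + \left(m-1\right)} = q^{m\left(m-1\right)/2}$ after factoring $q^{i-1}$ out of the $i$-th term.

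Next I would use the determinant homomorphism $\det \colon \operatorname{GL}_m\left(q\right) \to \mathbb{F}_q^{\times}$, which is surjective (diagonal matrices already realize every value) with kernel $\operatorname{SL}_m\left(q\right)$, so that
$$\left|\operatorname{SL}_m\left(q\right)\right| = \frac{\left|\operatorname{GL}_m\left(q\right)\right|}{q - 1} = q^{m\left(m-1\right)/2}\prod_{i=2}^m \left(q^i - 1\right),$$
the division by $q - 1$ simply removing the $i = 1$ factor from the product. Finally, since $\operatorname{PSL}_m\left(q\right) = \operatorname{SL}_m\left(q\right)/Z\left(\operatorname{SL}_m\left(q\right)\right)$ by definition, it remains to compute the order of the center. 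One shows $Z\left(\operatorname{SL}_m\left(q\right)\right)$ consists exactly of the scalar matrices $\lambda I_m$ with $\lambda \in \mathbb{F}_q^{\times}$ and $\lambda^m = \det\left(\lambda I_m\right) = 1$; since $\mathbb{F}_q^{\times}$ is cyclic of order $q - 1$, the number of such $\lambda$ is $\gcd\left(m, q - 1\right)$. Dividing then yields
$$\left|\operatorname{PSL}_m\left(q\right)\right| = \frac{\left|\operatorname{SL}_m\left(q\right)\right|}{\gcd\left(m, q - 1\right)} = \frac{q^{m\left(m-1\right)/2}}{\gcd\left(m,q-1\right)}\prod_{i=2}^m \left(q^i - 1\right),$$
as claimed.

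The only step demanding genuine care is the identification of $Z\left(\operatorname{SL}_m\left(q\right)\right)$ with the group of $m$-th roots of unity in $\mathbb{F}_q^{\times}$; this can be handled either by noting that $\operatorname{SL}_m\left(q\right)$ acts irreducibly on $\mathbb{F}_q^m$ for $m \geq 2$ and invoking Schur's lemma (a central element commutes with every linear map of determinant $1$, hence with all of $\operatorname{End}\left(\mathbb{F}_q^m\right)$, forcing it to be scalar), or by a direct computation against suitable elementary transvections. Everything else is elementary counting, and of course this is standard material for which one may alternatively cite \cite[\S 3.3.1]{Wilson}.
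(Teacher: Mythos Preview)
Your proof is correct and is precisely the standard derivation of $\left|\operatorname{PSL}_m\left(q\right)\right|$. Note, however, that the paper does not actually prove this proposition: it is stated with a citation to \cite[\S 3.3.1]{Wilson} and used as a black box, so there is no ``paper's own proof'' to compare against beyond the reference itself. Your argument is exactly the one found in standard texts such as Wilson's, so there is nothing to add.
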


We now prove an asymptotic result about symmetric ranks in prime dimensions.

\begin{thm} \label{p-1/a theorem}
For all $a \in \mathbb{Z}^+$, there exists $N_a \in \mathbb{Z}^+$ such that, for all primes $p \geq N_a$ for which the multiplicative order of $2$ modulo $p$ is equal to $\frac{p-1}{a}$ and all irreducible finite subgroups $G < \operatorname{GL}_p\left(\mathbb{Z}\right)$ satisfying (a) of Proposition \ref{Plesken prop}, we have $\operatorname{symrank}\left(\mathbb{Z}^p, G\right) \leq 2^p$.
\end{thm}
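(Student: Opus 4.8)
The plan is to combine the reduction to monomial matrices (Proposition \ref{reduction to monomial matrices}) with the classification of transitive permutation groups of prime degree (Theorem \ref{transitive subgroups of S_p}): the ``large'' permutation parts $S_p$ and $A_p$ will be handled by the sublattice analysis already packaged in Proposition \ref{pi(G) has only 4 subspaces}, while the ``small'' ones (metacyclic or of $\operatorname{PSL}$-type) will be handled by crude orbit-counting, which succeeds once $p$ is large compared to $a$. By Proposition \ref{reduction to monomial matrices} and the standing conventions of this section it suffices to prove $\operatorname{symrank}(L, G) \le 2^p$ for every $G \le \operatorname{Mon}_p(\mathbb{Z})$ irreducible with $-I_p \in G$, $|O_2(G^+)| > 1$ (automatic from (a) of Proposition \ref{Plesken prop}), and $\pi(G) \le S_p$ containing a $p$-cycle, and for every $G$-lattice $L \subseteq \mathbb{Z}^p$. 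We may assume $L \ne 0$ (whence $L$ has full rank $p$ by irreducibility) and, rescaling, that $L$ is primitive. Write $d$ for the multiplicative order of $2$ modulo $p$, so $\frac{p-1}{d} = a$ by hypothesis. Since $\pi(G)$ is transitive of prime degree, Theorem \ref{transitive subgroups of S_p} applies to it.

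First I would treat the case $\pi(G) \in \{S_p, A_p\}$. By Proposition \ref{only S_p-subspaces} the only $\pi(G)$-stable subspaces of $\mathbb{F}_2^p$ are $0, V_{\bm{1}}, V_E, \mathbb{F}_2^p$. Since $-I_p$ is central of order $2$ in $G$ we have $-I_p \in O_2(G)$, and $O_2(G) \le \operatorname{D}_p(\mathbb{Z})$ by Lemma \ref{O_2(G) consists of diagonal matrices}, so the subspace corresponding to $O_2(G)$ via Corollary \ref{O_2(G) is a pi(G) stable subspace} contains $\bm{1}$; moreover it has more than $2$ elements since $|O_2(G)| = 2|O_2(G^+)| > 2$ by Observation \ref{when -I_n in G}. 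As $\bm{1} \notin V_E$ (because $p$ is odd), the only remaining possibility is $O_2(G) = \operatorname{D}_p(\mathbb{Z})$, and then Proposition \ref{pi(G) has only 4 subspaces} gives $\operatorname{symrank}(L, G) \le 2^p$ with no hypothesis on the size of $p$.

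In the remaining cases $\pi(G)$ is either metacyclic of order dividing $p(p-1)$ or has socle $\operatorname{PSL}_m(q)$ with $p = (q^m - 1)/(q - 1)$, and I would first establish a uniform bound $|\pi(G)| \le f(p)$ with $f(p) = 2^{O((\log p)^2)}$. For the metacyclic case $|\pi(G)| \le p(p-1)$. For the $\operatorname{PSL}$ case, Proposition \ref{order of PSL(m,q)} gives $|\operatorname{PSL}_m(q)|\gcd(m, q-1) = q^{m(m-1)/2}\prod_{i=2}^m (q^i - 1) < q^{m^2 - 1}$, so by Theorem \ref{transitive subgroups of S_p}(d) $|\pi(G)| < q^{m^2 - 1} t$; since $p = 1 + q + \cdots + q^{m-1} > q^{m-1}$ we get $q < p^{1/(m-1)}$, $m < 1 + \log_2 p$, and $t \le \log_2 q \le \log_2 p$, whence $|\pi(G)| < p^{2 + \log_2 p}\log_2 p =: f(p)$, and this also dominates the metacyclic bound. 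Now split on $O_2(G)$. If $O_2(G) = \operatorname{D}_p(\mathbb{Z})$, then $\operatorname{D}_p(\mathbb{Z}) \le G$, so by Corollary \ref{generating with p/3 zeros} either $L \in \{0, L_{\bm{1}}\}$ — in which case $\operatorname{symrank}(L, G) \le |G\bm{1}| \le |\operatorname{Mon}_p(\mathbb{Z})\bm{1}| = 2^p$ — or $L$ is generated by the $G$-orbits of at most $\frac{p-1}{d} = a$ binary vectors of support length at most $\lfloor 2p/3 \rfloor$, so by Lemma \ref{orbit size of binary vector}, $\operatorname{symrank}(L, G) \le a \cdot 2^{\lfloor 2p/3 \rfloor} f(p)$, which is at most $2^p$ once $a f(p) \le 2^{\lceil p/3 \rceil}$. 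If instead $O_2(G) \subsetneq \operatorname{D}_p(\mathbb{Z})$, then $O_2(G)$ corresponds to a direct sum of some of the irreducible $C_p$-stable summands of $\mathbb{F}_2^p$ indexed by a subset $S$ with $0 \in S$ and $S \ne \{0, 1, \dots, a\}$, so its $\mathbb{F}_2$-dimension is at most $1 + (a - 1)d = p - d$ and $|O_2(G)| \le 2^{p - d}$; using $G \cong O_2(G) \rtimes \pi(G)$ (Lemma \ref{O_2(G) consists of diagonal matrices}) and Lemma \ref{symmetric rank facts}(a), $\operatorname{symrank}(L, G) \le |G| \cdot p \le 2^{p - d} f(p)\, p$, which is at most $2^p$ once $f(p)\, p \le 2^{(p-1)/a}$.

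Finally I would let $N_a$ be the least integer such that every prime $p \ge N_a$ satisfies $p \ge 7$, $a f(p) \le 2^{\lceil p/3 \rceil}$, and $f(p)\, p \le 2^{(p-1)/a}$; such $N_a$ exists because $f(p) = 2^{o(p)}$ whereas both right-hand sides are $2^{\Theta(p)}$ for fixed $a$, and then all cases above yield $\operatorname{symrank}(L, G) \le 2^p$. The step I expect to be the main obstacle is the $\operatorname{PSL}$-type alternative of Theorem \ref{transitive subgroups of S_p}: one must convert Proposition \ref{order of PSL(m,q)} into a bound on $|\pi(G)|$ that is genuinely subexponential in $p$, after which everything reduces to weighing this subexponential bound against the true exponential savings $2^{d} = 2^{(p-1)/a}$ guaranteed by the hypothesis $d = \frac{p-1}{a}$ — and it is exactly this comparison that makes it necessary to exclude the finitely many small primes below $N_a$.
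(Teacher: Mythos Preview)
Your proposal is correct and follows essentially the same approach as the paper: the same reduction to monomial groups, the same trichotomy on $\pi(G)$ via Theorem~\ref{transitive subgroups of S_p}, with $A_p/S_p$ handled by Proposition~\ref{pi(G) has only 4 subspaces}, and the remaining cases split into $O_2(G)=\operatorname{D}_p(\mathbb{Z})$ (Corollary~\ref{generating with p/3 zeros} + Lemma~\ref{orbit size of binary vector}) versus $O_2(G)\subsetneq\operatorname{D}_p(\mathbb{Z})$ (crude bound from Lemma~\ref{symmetric rank facts}(a)). The only cosmetic differences are that you package the metacyclic and $\operatorname{PSL}$-type bounds into a single subexponential $f(p)$ rather than treating them as separate Cases II and III, and that your remark ``with no hypothesis on the size of $p$'' in the $A_p/S_p$ case overlooks the standing assumption $p\ge 7$ in Proposition~\ref{pi(G) has only 4 subspaces}---harmless since you fold $p\ge 7$ into $N_a$ anyway.
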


\begin{proof}
Fix $a \in \mathbb{Z}^+$, and suppose that $p \in \mathbb{Z}^+$ is a prime such that the multiplicative order of $2$ modulo $p$ is equal to $d = \frac{p-1}{a}$.  By (a) of Proposition \ref{Plesken prop} and Proposition \ref{reduction to monomial matrices}, it suffices to consider $G \leq \operatorname{Mon}_p\left(\mathbb{Z}\right)$ satisfying $-I_p \in G$ and $\left|O_2^+\left(G\right)\right| > 1$, and primitive $G$-sublattices $L \subseteq \mathbb{Z}^p$.  By (c) of Observation \ref{when -I_n in G} and (a) of Proposition \ref{Plesken prop}, we have $O_2\left(G\right) \cong C_2^{d\ell + 1} \cong C_2^{\ell\left(p-1\right)/a + 1}$ for some integer $1 \leq \ell \leq a$.  By Lemma \ref{O_2(G) consists of diagonal matrices}, we have $G \cong O_2\left(G\right) \rtimes \pi\left(G\right)$, where $\pi:  G \to S_p$ is the projection map.  Since $\pi\left(G\right)$ contains a $p$-cycle by (a) of Proposition \ref{Plesken prop}, we may proceed by considering the possibilities for $\pi\left(G\right)$ given by Theorem \ref{transitive subgroups of S_p}.
\par
\textit{Case I}:  If $\pi\left(G\right) \in \left\lbrace A_p, S_p\right\rbrace$, then we are done by Propositions \ref{only S_p-subspaces} and \ref{pi(G) has only 4 subspaces}.
\par
\textit{Case II}:  Assume that $\pi\left(G\right)$ is a metacyclic group of order dividing $p\left(p-1\right)$.  We then have $\left|\pi\left(G\right)\right| \leq p\left(p-1\right)$, so 
$$\left|G\right| \leq 2^{\ell\left(p-1\right)/a + 1}p\left(p-1\right).$$  
\par
\textit{Subcase i}:  Suppose $1 \leq \ell \leq a - 1$.  Then, (a) of Lemma \ref{symmetric rank facts} gives the bound
$$\operatorname{symrank}\left(L, G\right) \leq \left|G\right|p \leq 2^{\ell\left(p-1\right)/a + 1}p^2\left(p-1\right) \leq 2^{\left(a-1\right)\left(p-1\right)/a + 1}p^2\left(p-1\right).$$
Thus, it suffices to show that $2^p$ is greater than or equal to the rightmost expression.  This is equivalent to verifying the inequality
$$2^{\left(p-1\right)/a} \geq p^2\left(p-1\right).$$
Since $a$ is fixed, this inequality is satisfied for sufficiently large $p$.
\par
\textit{Subcase ii}:  Suppose $\ell = a$.  Then, we have $O_2\left(G\right) \cong C_2^p$, so $D_p\left(\mathbb{Z}\right) \leq G$.  Thus, we apply (a) of Lemma \ref{symmetric rank facts}, Corollary \ref{generating with p/3 zeros}, and Lemma \ref{orbit size of binary vector} to attain the bound
$$\operatorname{symrank}\left(L, G\right) \leq a \cdot 2^{\left\lfloor 2p/3\right\rfloor} p\left(p-1\right).$$
Similar to subcase (i), it suffices to verify that 
$$2^{p/3} \geq ap\left(p-1\right),$$
which holds for sufficiently large $p$ since $a$ is fixed.
\par
\textit{Case III}:  Assume that $\pi\left(G\right)$ satisfies (d) of Theorem \ref{transitive subgroups of S_p}; in particular, suppose the socle of $G$ is isomorphic to $\operatorname{PSL}_m\left(q\right)$, where $q = u^t$ for some prime $u$ and $t \in \mathbb{Z}^+$.  Note that we have $p = \frac{q^m-1}{q-1}$ in this case.  Observe that $2 \leq q \leq p$ and that $m = \log_q\left[p\left(q-1\right)+1\right]$.  We proceed by considering the same two subcases as in case II.
\par
\textit{Subcase i}:  Suppose $1 \leq \ell \leq a - 1$.  In this case, we have
$$\left|G\right| \leq 2^{\ell\left(p-1\right)/a + 1} \cdot \frac{q^{m\left(m-1\right)/2}}{\gcd\left(m,q-1\right)} \cdot \prod_{i=2}^m\left(q^i - 1\right) \cdot t \cdot \gcd\left(m, q-1\right).$$
Using some algebra, (a) of Lemma \ref{symmetric rank facts}, and the fact that $\ell \leq a - 1$, we see that it suffices to verify the following inequality:
\begin{equation} \label{l < a equivalent stupid bound}
2^{\frac{p-1}{a}} \geq q^{m\left(m-1\right)/2} \cdot \prod_{i=2}^m \left(q^i-1\right) \cdot t \cdot p.
\end{equation}
To this end, observe that
$$\prod_{i=2}^m \left(q^i-1\right) < \prod_{i=2}^m q^i = q^{\sum_{i=2}^m i} = q^{m\left(m+1\right)/2 - 1}.$$
In particular,
$$q^{m\left(m-1\right)/2}q^{m\left(m+1\right)/2 - 1} = q^{m^2 - 1}.$$
Since $2 \leq q \leq p$, we have
$$m = \log_q\left[p\left(q-1\right)+1\right] \leq \log_q\left(p^2+1\right) \leq \log_2\left(p^2+1\right).$$
Therefore,
$$q^{m^2-1} < q^{m^2} \leq q^{\log_q\left(p^2+1\right) \cdot \log_2\left(p^2+1\right)} = \left(p^2+1\right)^{\log_2\left(p^2+1\right)}.$$
Further, since the prime $u$ must be at least $2$, we have
$$t = \log_u\left(q\right) \leq \log_2\left(q\right) \leq \log_2\left(p\right).$$
Thus, we obtain the following inequality which is stronger than inequality (\ref{l < a equivalent stupid bound}):
$$2^{\frac{p-1}{a}} \geq \left(p^2+1\right)^{\log_2\left(p^2+1\right)} \cdot \log_2\left(p\right) \cdot p.$$
Since $a$ is fixed, this inequality holds for sufficiently large $p$, as desired.
\par
\textit{Subcase ii:}  Suppose $\ell = a$.  In this case, we have $O_2\left(G\right) = \operatorname{D}_p\left(\mathbb{Z}\right)$.  Thus, we apply Corollary \ref{generating with p/3 zeros} and Lemma \ref{orbit size of binary vector} to attain the following bound:
\begin{align*}
\operatorname{symrank}\left(L, G\right) & \leq \frac{p-1}{d} \cdot 2^{\left\lfloor  2p/3\right\rfloor} \cdot \left|\pi\left(G\right)\right| \\
& \leq a \cdot 2^{\left\lfloor  2p/3\right\rfloor} \cdot \frac{q^{m\left(m-1\right)/2}}{\gcd\left(m,q-1\right)} \cdot \prod_{i=2}^m\left(q^i-1\right) \cdot \gcd\left(m,q-1\right) \cdot t \\
& = a \cdot 2^{\left\lfloor  2p/3\right\rfloor} \cdot q^{m\left(m-1\right)/2} \prod_{i=2}^m\left(q^i-1\right) \cdot t.
\end{align*}
Applying the bounds derived in subcase (i) and removing the floor from the exponent of the $2$, it suffices to verify the following inequality:
$$2^{p/3} \geq a \cdot \left(p^2+1\right)^{\log_2\left(p^2+1\right)} \cdot \log_2\left(p\right).$$
Since $a$ is fixed, this inequality holds for all sufficiently large $p$, completing the proof.
\end{proof}

In particular, Corollary \ref{2 a primitive root} asserts that when $a = 1$, we can take $N_1 = 2$; in other words, $\operatorname{symrank}_{\operatorname{irr}}\left(p\right) \leq 2^p$ for \textit{all} primes $p$ for which the multiplicative order of $2$ modulo $p$ is $p - 1$.  We now show the same result in the $a = 2$ case.

\begin{thm} \label{p-1/2 theorem}
If $p \geq 7$ is a prime number such that the order of $2$ modulo $p$ is $\frac{p-1}{2}$, and $G < \operatorname{GL}_p\left(\mathbb{Z}\right)$ is an irreducible finite subgroup satisfying (a) of Proposition \ref{Plesken prop}, then $\operatorname{symrank}\left(\mathbb{Z}^p, G\right) \leq 2^p$.
\end{thm}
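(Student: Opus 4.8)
The plan is to run the three-case analysis from the proof of Theorem~\ref{p-1/a theorem} with $a = 2$, but to make every estimate explicit so that the conclusion holds for \emph{every} prime $p \ge 7$ meeting the hypothesis rather than only for $p$ past an unspecified bound. By Proposition~\ref{reduction to monomial matrices} and part~(a) of Proposition~\ref{Plesken prop}, it suffices to bound $\operatorname{symrank}(L, H)$ for each irreducible $H \le \operatorname{Mon}_p(\mathbb{Z})$ with $-I_p \in H$, $\left|O_2(H^+)\right| > 1$, and $\pi(H)$ containing a $p$-cycle, with $L \subseteq \mathbb{Z}^p$ primitive. Since $d = \tfrac{p-1}{2}$, Observation~\ref{when -I_n in G}(c) and Proposition~\ref{Plesken prop}(a) give $O_2(H) \cong C_2^{d\ell + 1}$ with $\ell \in \{1, 2\}$, and Lemma~\ref{O_2(G) consists of diagonal matrices} gives $H \cong O_2(H) \rtimes \pi(H)$; I then run through the possibilities for $\pi(H)$ furnished by Theorem~\ref{transitive subgroups of S_p}.

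If $\pi(H) \in \{A_p, S_p\}$, then Proposition~\ref{only S_p-subspaces} shows that the hypothesis of Proposition~\ref{pi(G) has only 4 subspaces} is satisfied, whence $\operatorname{symrank}(L, H) \le 2^p$. If $\pi(H)$ is metacyclic of order dividing $p(p-1)$, I split on $\ell$: when $\ell = 1$, Lemma~\ref{symmetric rank facts}(a) gives $\operatorname{symrank}(L, H) \le \left|H\right|\, p \le 2^{(p-1)/2 + 1} p^2(p-1)$, so it suffices to verify $p^2(p-1) \le 2^{(p-1)/2}$; when $\ell = 2$ one has $O_2(H) = \operatorname{D}_p(\mathbb{Z})$, and Corollary~\ref{generating with p/3 zeros} together with Lemma~\ref{orbit size of binary vector} gives $\operatorname{symrank}(L, H) \le 2 \cdot 2^{\lfloor 2p/3\rfloor}\, p(p-1)$, so it suffices to verify $p(p-1) \le 2^{\lceil p/3\rceil - 1}$. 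Both inequalities hold for all $p \ge 31$. Finally, the only primes with $7 \le p \le 29$ whose multiplicative order of $2$ equals $\tfrac{p-1}{2}$ are $p = 7, 17, 23$, and these are settled by Theorem~\ref{low dimensions theorem}: any such $H$ lies in an irreducible, maximal, finite subgroup of $\operatorname{GL}_p(\mathbb{Z})$, so the bound follows from Proposition~\ref{reduction of symrankirr} and Lemma~\ref{symmetric rank facts}(c).

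This leaves the case in which the socle of $\pi(H)$ is isomorphic to $\operatorname{PSL}_m(q)$ with $p = \tfrac{q^m - 1}{q - 1}$, which I expect to be the crux. For large $p$ I would argue as in Theorem~\ref{p-1/a theorem}: by Proposition~\ref{order of PSL(m,q)} one has $\left|\pi(H)\right| < q^{m^2 - 1}\gcd(m, q-1)\, t$, and with $2 \le q \le p$, $m \le \log_2(p^2 + 1)$, and $t \le \log_2 p$ the bounds $\operatorname{symrank}(L, H) \le \left|H\right| p$ (for $\ell = 1$) and $\operatorname{symrank}(L, H) \le 2 \cdot 2^{\lfloor 2p/3 \rfloor}\left|\pi(H)\right|$ (for $\ell = 2$, where again $O_2(H) = \operatorname{D}_p(\mathbb{Z})$) become $\le 2^p$ once $p$ exceeds an explicit threshold $N$ coming from the inequality $2^{p/3} \ge 2\,(p^2+1)^{\log_2(p^2+1)}\log_2 p$. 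For the finitely many primes $p \le N$ meeting the hypothesis, one then checks that none of them are of the form $\tfrac{q^m - 1}{q-1}$ with $m$ prime, $q$ a prime power, $(m, q) \notin \{(2,2), (2,3)\}$; in fact the only primes of this shape satisfying the order condition at all are $p = 7$ (with $\operatorname{PSL}_3(2)$) and $p = 17$ (with $\operatorname{PSL}_2(16)$), both already covered by Theorem~\ref{low dimensions theorem}, so the $\operatorname{PSL}$ case contributes nothing new.

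The main obstacle, accordingly, is this last case, and it is more a matter of bookkeeping than of technique: the available bound on $\left|\pi(H)\right|$ is only of size roughly $(p^2)^{\log_2(p^2)}$, which does not fall below $2^{p/3}$ until $p$ is moderately large, so $N$ is pushed up and one must be sure the finite list of $\operatorname{PSL}$-type primes below $N$ meeting the order-of-$2$ hypothesis really is just $\{7, 17\}$. For $m = 2$ this reduces to a remark about Fermat primes $2^t + 1$ (only $t = 4$, giving $p = 17$, works), and for $m \ge 3$ to inspecting the short list of primes $p = 1 + q + \cdots + q^{m-1} \le N$ and computing the order of $2$ modulo each; by comparison the metacyclic and $A_p, S_p$ cases are routine once the two displayed inequalities are confirmed for $p \ge 31$.
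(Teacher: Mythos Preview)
Your proposal follows essentially the same route as the paper's proof: specialize the case analysis of Theorem~\ref{p-1/a theorem} to $a=2$, dispose of small primes via Theorem~\ref{low dimensions theorem}, verify the two explicit inequalities in the metacyclic case for $p \ge 31$, and in the $\operatorname{PSL}$ case combine the asymptotic threshold with a finite check that no relevant prime falls in the gap. The paper does exactly this, citing a table of Bateman--Stemmler to find that the smallest prime $p \ge 41$ of the form $\tfrac{q^m-1}{q-1}$ with $d = \tfrac{p-1}{2}$ is $p = 2801$ (coming from $q=7$, $m=5$), while the explicit thresholds extracted from the Case~III estimates are $760$ (for $\ell=1$) and $1297$ (for $\ell=2$); since $1297 < 2801$, the argument closes.

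One inaccuracy worth flagging: your assertion that ``the only primes of this shape satisfying the order condition \emph{at all} are $p = 7$ and $p = 17$'' is false, precisely because of $p = 2801$. Fortunately your argument only requires the weaker statement that no such prime lies in the interval $(23, N]$, and that is indeed true once one checks that the Case~III threshold $N$ satisfies $N < 2801$. So the slip does not break the proof, but you should restate the claim in its bounded form rather than as a global fact.
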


\begin{proof}
We follow the proof of Theorem \ref{p-1/a theorem}, with $a = 2$.  In particular, we must have $\ell \in \left\lbrace 1, 2\right\rbrace$.  Note that the case $p \leq 23$ is addressed in Chapter \ref{small dimensions chapter}, so it suffices to prove the claim for primes $p \geq 41$.  The justification given for case I of Theorem \ref{p-1/a theorem} applies here as well, so we proceed to case II.
\par 
\textit{Case II:}  Assume that $\pi\left(G\right)$ is a metacyclic group of order dividing $p\left(p-1\right)$.  As above, we have
$$\left|G\right| \leq 2^{\ell\left(p-1\right)/2 + 1}p\left(p-1\right).$$  
\par
\textit{Subcase i}:  If $\ell = 1$, then (a) of Lemma \ref{symmetric rank facts} gives the bound
$$\operatorname{symrank}\left(L, G\right) \leq \left|G\right|p = 2^{\left(p-1\right)/2 + 1}p^2\left(p-1\right),$$
and the right hand side is less than or equal to $2^p$ for $p \geq 31$ (verified using a computer algebra system).  
\par
\textit{Subcase ii}:  If $\ell = 2$, then $D_p\left(\mathbb{Z}\right) \leq G$, so we apply (a) of Lemma \ref{symmetric rank facts}, Corollary \ref{generating with p/3 zeros}, and Lemma \ref{orbit size of binary vector} to attain the bound
$$\operatorname{symrank}\left(L, G\right) \leq 2 \cdot 2^{\left\lfloor 2p/3\right\rfloor} p\left(p-1\right).$$
The right hand side is less than or equal to $2^p$ for all $p \geq 31$ (verified using a computer algebra system).
\par
\textit{Case III}:  Assume that $\pi\left(G\right)$ satisfies (d) of Theorem \ref{transitive subgroups of S_p}; in particular, suppose the socle of $G$ is isomorphic to $\operatorname{PSL}_m\left(q\right)$, where $q = u^t$ for some prime $u$ and $t \in \mathbb{Z}^+$.  Note that we have $p = \frac{q^m-1}{q-1}$ in this case.  By \cite[Table II]{Bateman}, the first prime $p$ of this form that is at least $41$ and satisfies the condition that $d = \frac{p-1}{2}$ is the prime $p = 2801$, obtained by $q = 7$ and $m = 5$.  Following the ideas in case III of the proof of Theorem \ref{p-1/a theorem}, we find that $2^p \geq \operatorname{symrank}\left(L, G\right)$ for $p \geq 760$ when $\ell = 1$ and for $p \geq 1297$ when $\ell = 2$.  As $760 < 1297 < 2801$, this completes the proof.
\end{proof}




\section{Almost Simple Groups in Prime Dimensions} \label{almost simple groups chapter}

Let $p \geq 3$ be a prime.  Recall that (b) of Proposition \ref{Plesken prop} says the following about an irreducible finite matrix group $G < \operatorname{GL}_p\left(\mathbb{Z}\right)$:  The group $O_2\left(G^+\right) = 1$; the group $G^+$ has a unique minimal normal subgroup $N \neq 1$ (possibly $N = G^+$); the group $N$ is nonabelian simple; the centralizer $C_{G^+}\left(N\right) = 1$; and $N$ is irreducible as a matrix group.
\par
In this section, we prove the following theorem.

\begin{thm} \label{almost simple groups theorem}
Let $p \geq 7$ be a prime number and let $G < \operatorname{GL}_p\left(\mathbb{Z}\right)$ be an irreducible finite group satisfying (b) of Proposition \ref{Plesken prop}.  Then, for any $G$-lattice $L \subseteq \mathbb{Z}^p$, we have $\operatorname{symrank}\left(L, G\right) \leq 2^p$.
\end{thm}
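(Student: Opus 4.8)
The plan is to invoke the classification of finite groups $G < \operatorname{GL}_p(\mathbb{Z})$ satisfying (b) of Proposition \ref{Plesken prop}, which is extremely restrictive: by that proposition, $G^+$ has a unique minimal normal subgroup $N$ which is nonabelian simple, irreducible over $\mathbb{Z}$ (hence over $\mathbb{C}$ by Proposition \ref{irreducibility over Z and C}), with $C_{G^+}(N) = 1$. Thus $N$ is a nonabelian simple group admitting a faithful $\mathbb{Q}$-irreducible representation of prime degree $p$, and $G^+$ embeds into $\operatorname{Aut}(N)$. The known list of such pairs $(N, p)$ — coming from the classification of quasi-simple groups with low-dimensional irreducible representations, as in the work of Feit, Hiss--Malle, and others — is finite for each $p$ and, more importantly, is sparse enough that one can control $|G|$. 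The first step is therefore to enumerate the possible $N$: essentially $N \cong A_{p+1}$ or $A_{p+2}$ (with the standard/deleted permutation representation), $N \cong \operatorname{PSL}_m(q)$ with $p = \frac{q^m - 1}{q-1}$ (acting on the projective space), or one of finitely many exceptional/sporadic cases.

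Once the list is in hand, the strategy splits. For the bulk of the list, I would bound $|G| \le |\operatorname{Aut}(N)|$ and apply part (a) of Lemma \ref{symmetric rank facts} to get $\operatorname{symrank}(L, G) \le |G| \cdot p$; since $|\operatorname{Aut}(N)|$ is polynomially bounded in $p$ for the $\operatorname{PSL}_m(q)$ family (via Proposition \ref{order of PSL(m,q)}, exactly as in Case III of the proof of Theorem \ref{p-1/a theorem}) and similarly controlled for the sporadic cases, the inequality $|G| \cdot p \le 2^p$ will hold for all but finitely many small primes $p$. The remaining finitely many small primes $p$ (e.g. $p = 7, 11, 13, \dots$ up to whatever threshold the crude bound fails at) are then handled by direct computation: for $p \le 23$ this is already covered by Theorem \ref{low dimensions theorem}, and for the handful of larger primes where $N$ could be, say, $A_{p+1}$ or a sporadic group, one either falls back on an explicit generating set / orbit computation as in the $(23,3,2)$ case of Theorem \ref{low dimensions theorem}, or exploits structure of the specific lattice.

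The alternating-group cases $N \cong A_{p+1}$ or $A_{p+2}$ deserve separate treatment since there $|G| \sim (p+2)!$ grows much faster than $2^p$, so the naive bound $|G| \cdot p$ is useless. Here the point is that the relevant $G$-lattice is (a sublattice of) the root lattice of type $\mathsf{A}_{p+1}$ or the standard deleted-permutation lattice, on which $S_{p+1}$ or $S_{p+2}$ acts. For such lattices the symmetric rank is governed by orbit sizes of short vectors: the orbit of a standard basis vector difference $e_i - e_j$ under $S_{p+1}$ has size $\binom{p+1}{2}$, which is far below $2^p$, and one checks that such orbits already generate the lattice (or the at most two or three relevant sublattices, classified as in Corollary \ref{sublattices when O_2(G) = D_p(Z)} or by a direct sub-$W$-lattice analysis à la Theorem \ref{root systems theorem}). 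So in these cases $\operatorname{symrank}(L,G) \le \binom{p+2}{2} \cdot (\text{small constant}) \ll 2^p$ for $p \ge 7$.

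The main obstacle I anticipate is neither the asymptotics nor the alternating cases but the finitely many genuinely exceptional pairs $(N, p)$ — quasi-simple groups of Lie type in the defining characteristic of an unexpected prime degree, or sporadic groups with an irreducible $\mathbb{Q}$-representation of prime degree $p$ in the range where $|G| \cdot p > 2^p$ but $p$ is still too large for the $\operatorname{GL}_p(\mathbb{Z})$-classification of Theorem \ref{finite subgroups of GLn(Z)} to apply. For each such case one must produce, by hand or by a targeted computer computation, an explicit $G$-stable generating set of $\mathbb{Z}^p$ (and of its few $G$-sublattices) of size at most $2^p$ — for instance the orbit of a minimal-norm vector of an invariant positive definite form, as in Proposition \ref{imf subgroups fix quadratic forms} and Lemma \ref{diagonal norms form a generating set}. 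Verifying that such an orbit spans and counting it is the delicate, case-by-case part of the argument.
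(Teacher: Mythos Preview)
Your proposal is correct and follows essentially the same approach as the paper: split by the classification of finite simple groups, handle the alternating-group case via the $\mathsf{A}_{n-1}$ root/weight lattices (Theorem \ref{root systems theorem}), and for the remaining families bound $\operatorname{symrank}(L,G)\le 2|\operatorname{Aut}(S)|\cdot p$ against $2^p$ using Landazuri--Seitz--type lower bounds on $\operatorname{(p)rdim}(S)$, leaving finitely many exceptions to check by hand. Two minor remarks: only $A_{p+1}$ (not $A_{p+2}$) actually arises in degree $p$, and the paper's case analysis shows that every residual exception either satisfies $2^p\ge 2|\operatorname{Aut}(S)|p$ at its minimal prime degree $p\ge 29$ or admits no rational irreducible of such prime degree at all, so no ad hoc orbit computations beyond $p\le 23$ are needed.
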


Theorem \ref{low dimensions theorem} implies Theorem \ref{almost simple groups theorem} for $p \in \left\lbrace 7, 11, 13, 17, 19, 23\right\rbrace$, so it suffices to consider $p \geq 29$ in this section.  Further, observe that, together with Theorem \ref{low dimensions theorem}, Corollary \ref{2 a primitive root}, and Theorems \ref{p-1/a theorem} and \ref{p-1/2 theorem}, Theorem \ref{almost simple groups theorem} completes the proof of Theorems \ref{asymptotic theorem} and \ref{symrankirr theorem 1}.  In fact, Theorem \ref{almost simple groups theorem} gives us something stronger, as this theorem does not have any restrictions on the order of $2$ modulo $p$.
\par
Throughout this section, we let $G < \operatorname{GL}_p\left(\mathbb{Z}\right)$ be an irreducible finite group for some prime $p \geq 29$.  We assume that $G^+$ satisfies (b) of Proposition \ref{Plesken prop}; in particular, we have $S \trianglelefteq G^+ \leq \operatorname{Aut}\left(S\right)$ for some nonabelian finite simple group $S$, so $G^+$ is an \textit{almost simple group}.  We will follow notation from \cite{ATLAS} for the finite simple groups.  For an abstract group $S$, we write $\operatorname{rdim}\left(S\right)$ for the minimal degree of a faithful representation of $S$.  It will be useful to have bounds on $\operatorname{rdim}\left(S\right)$ when $S$ is a finite simple group.  
\par
In many cases, bounds on the degrees of \textit{projective representations} of finite simple groups are more accessible.  To that end, recall that a \textit{projective representation} of a group $G$ is an injective group homomorphism $G \hookrightarrow \operatorname{PGL}_n\left(k\right)$ for some $n \in \mathbb{Z}^+$ and some field $k$, where $\operatorname{PGL}_n\left(k\right)$ is the $n$-dimensional \textit{projective general linear group} over $k$.  We now define the following analogue of representation dimension.

\begin{defn} \label{prdim def}
We define the \textbf{projective representation dimension} of a group $G$, denoted $\operatorname{prdim}\left(G\right)$, to be the minimal $n \in \mathbb{Z}^+$ for which $G$ admits a projective representation $G \hookrightarrow \operatorname{PGL}_n\left(k\right)$ for some field $k$.
\end{defn}

\begin{lem} \label{prdim is lower bound on rdim}
For all finite simple groups $S$, we have $\operatorname{prdim}\left(S\right) \leq \operatorname{rdim}\left(S\right)$.
\end{lem}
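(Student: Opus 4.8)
The plan is to show that any faithful linear representation of a finite simple group $S$ gives rise to a faithful projective representation of the same dimension, so that the minimal degree of the latter cannot exceed the minimal degree of the former. First I would take a faithful embedding $S \hookrightarrow \operatorname{GL}_n\left(k\right)$ realizing $n = \operatorname{rdim}\left(S\right)$, and compose it with the canonical quotient map $\operatorname{GL}_n\left(k\right) \to \operatorname{PGL}_n\left(k\right)$ to obtain a homomorphism $S \to \operatorname{PGL}_n\left(k\right)$. It then suffices to argue that this composite is injective, which would exhibit a projective representation of $S$ of degree $n$ and hence force $\operatorname{prdim}\left(S\right) \leq n = \operatorname{rdim}\left(S\right)$.

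The key step is the injectivity of the composite. The kernel of $S \to \operatorname{GL}_n\left(k\right) \to \operatorname{PGL}_n\left(k\right)$ is the preimage in $S$ of the scalar matrices, i.e. $\left\lbrace s \in S \mid \rho\left(s\right) \in k^{\times} I_n\right\rbrace$, where $\rho$ denotes the faithful embedding. This kernel is a normal subgroup of $S$. Since $S$ is simple, this normal subgroup is either trivial or all of $S$. It cannot be all of $S$: that would mean $\rho\left(S\right)$ consists entirely of scalar matrices, hence is abelian, contradicting the fact that $S$ is a nonabelian simple group (and $\rho$ is injective). Therefore the kernel is trivial, so the composite is a faithful projective representation of degree $n$.

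The main (and essentially only) obstacle is the simplicity argument above, and it is not really an obstacle — it is exactly where the hypothesis that $S$ is a finite \emph{simple} group (and in particular nonabelian) is used, and the argument is short. One only needs to be slightly careful that $\operatorname{prdim}\left(S\right)$ is allowed to range over all fields $k$ in its definition, just as $\operatorname{rdim}\left(S\right)$ is; since we keep the same field $k$ throughout, this causes no difficulty. I would present the proof in two or three sentences along exactly these lines.
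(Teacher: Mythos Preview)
Your proposal is correct and follows essentially the same approach as the paper's proof: take a faithful linear representation realizing $\operatorname{rdim}\left(S\right)$, compose with the quotient $\operatorname{GL}_n\left(k\right)\to\operatorname{PGL}_n\left(k\right)$, and use simplicity of $S$ to conclude the composite is injective. Your version is in fact slightly more explicit than the paper's in spelling out why the kernel cannot be all of $S$ (invoking that $S$ is nonabelian), which is exactly the point where the argument could otherwise fail.
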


\begin{proof}
Suppose $\rho:  S \hookrightarrow \operatorname{GL}_n\left(k\right)$ is a representation for some $n \in \mathbb{Z}^+$ and some field $k$, and consider the quotient map $\phi:  \operatorname{GL}_n\left(k\right) \to \operatorname{PGL}_n\left(k\right)$.  Since $S$ is simple, it follows that the restriction of $\phi$ to $\rho\left(S\right)$ is injective.  Thus, the map $\phi \circ \rho$ is a faithful projective representation of $S$ of the same degree as $\rho$.
\end{proof}

Our goal is now to apply (a) of Lemma \ref{symmetric rank facts} to develop a bound on $\operatorname{symrank}\left(L, G\right)$ that is dependent only upon $S$.  We start with a general numerical observation.

\begin{lem} \label{numerical observation}
For all $a, b, c \in \mathbb{Z}^+$, if $b \geq a$ and $2^a \geq ac$, then $2^b \geq bc$.
\end{lem}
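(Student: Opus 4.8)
The plan is to prove this by induction on $b$, starting from the base case $b = a$. The hypothesis $2^a \geq ac$ is exactly the statement we want for $b = a$, so the base case is free. For the inductive step, suppose $2^b \geq bc$ for some $b \geq a$; I would then write $2^{b+1} = 2 \cdot 2^b \geq 2bc$ and observe that $2b \geq b + 1$ holds for every $b \in \mathbb{Z}^+$, so $2bc \geq (b+1)c$, giving $2^{b+1} \geq (b+1)c$. This closes the induction and proves the claim for all $b \geq a$.

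An equivalent and perhaps more conceptual framing I could use instead: the sequence $\left(2^n/n\right)_{n \geq 1}$ is non-decreasing, since $\frac{2^{n+1}}{n+1} \geq \frac{2^n}{n}$ is equivalent to $2n \geq n+1$, i.e.\ to $n \geq 1$. Given this monotonicity, the hypotheses $b \geq a$ and $2^a \geq ac$ (the latter rewritten as $2^a/a \geq c$) immediately yield $2^b/b \geq 2^a/a \geq c$, hence $2^b \geq bc$. Either formulation is short; I would likely present the induction since it avoids introducing the auxiliary ratio and keeps everything in $\mathbb{Z}^+$.

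There is no real obstacle here — the only arithmetic input is the trivial inequality $2b \geq b+1$ for positive integers $b$, and the role of this lemma is purely to let later arguments upgrade a single verified instance of an exponential-versus-polynomial bound (at some threshold dimension) to all larger dimensions. I would keep the write-up to a few lines.
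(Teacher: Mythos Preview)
Your proof is correct. Both the induction and the monotonicity-of-$2^n/n$ versions work cleanly, and either would be an acceptable replacement.

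The paper argues slightly differently: it observes that $2^a \geq ac \geq c$ and $2^{b-a}-1 \geq b-a$, and then computes directly
\[
2^b - 2^a = 2^a\bigl(2^{b-a}-1\bigr) \geq c(b-a) = bc - ac,
\]
which added to $2^a \geq ac$ gives the conclusion in one stroke. So the paper jumps from $a$ to $b$ in a single telescoping step using the inequality $2^k \geq k+1$, whereas you climb from $a$ to $b$ one unit at a time using $2b \geq b+1$. The underlying arithmetic input is the same trivial exponential-versus-linear fact; your inductive packaging is arguably more transparent, while the paper's version avoids invoking induction explicitly. Neither approach has any real advantage over the other for a lemma this elementary.
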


\begin{proof}
Since $2^a \geq ac \geq c$ and $2^{b-a} - 1 \geq b-a$, we have
$$2^b - 2^a = 2^a\left(2^{b-a}-1\right) \geq c\left(b-a\right) = bc - ac,$$
and the claim follows.
\end{proof}

\begin{lem} \label{almost simple groups inequality}
Suppose $S \trianglelefteq G^+ \leq \operatorname{Aut}\left(S\right)$ for some nonabelian finite simple group $S$ and irreducible finite matrix group $G < \operatorname{GL}_p\left(\mathbb{Z}\right)$ for $p \geq 29$ prime.  If $S$ satisfies any of the following inequalities, then $\operatorname{symrank}\left(L, G\right) \leq 2^p$ for all $G$-lattices $L \subseteq \mathbb{Z}^p$.
\begin{equation} \label{og almost simple inequality}
2^{\operatorname{rdim}\left(S\right)} \geq 2\left|\operatorname{Aut}\left(S\right)\right|\operatorname{rdim}\left(S\right) 
\end{equation}
\begin{equation} \label{projective almost simple inequality}
2^{\operatorname{prdim}\left(S\right)} \geq 2\left|\operatorname{Aut}\left(S\right)\right|\operatorname{prdim}\left(S\right)
\end{equation}
\begin{equation} \label{29 almost simple inequality}
2^{29} \geq 58\left|\operatorname{Aut}\left(S\right)\right|
\end{equation}
\end{lem}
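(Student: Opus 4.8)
The plan is to reduce all three cases to the single inequality $2^p \ge 2\lvert\operatorname{Aut}(S)\rvert p$ and then extract that inequality from each of \eqref{og almost simple inequality}, \eqref{projective almost simple inequality}, and \eqref{29 almost simple inequality} by a direct appeal to Lemma \ref{numerical observation}.

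First I would bound $\lvert G\rvert$. The determinant of any element of a finite subgroup of $\operatorname{GL}_p\left(\mathbb{Z}\right)$ lies in $\left\lbrace \pm 1\right\rbrace$, so $G^+ = \ker\bigl(\det\colon G \to \left\lbrace \pm 1\right\rbrace\bigr)$ has index at most $2$ in $G$, giving $\lvert G\rvert \le 2\lvert G^+\rvert$. By hypothesis $S \trianglelefteq G^+ \le \operatorname{Aut}\left(S\right)$, hence $\lvert G^+\rvert \le \lvert\operatorname{Aut}\left(S\right)\rvert$ and so $\lvert G\rvert \le 2\lvert\operatorname{Aut}\left(S\right)\rvert$. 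Since $L \subseteq \mathbb{Z}^p$ forces $\operatorname{rank}\left(L\right) \le p$, part (a) of Lemma \ref{symmetric rank facts} yields
\[
\operatorname{symrank}\left(L, G\right) \;\le\; \lvert G\rvert\operatorname{rank}\left(L\right) \;\le\; 2\lvert\operatorname{Aut}\left(S\right)\rvert p ,
\]
so it suffices in every case to show $2\lvert\operatorname{Aut}\left(S\right)\rvert p \le 2^p$.

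To finish, note that the inclusion $S \le G < \operatorname{GL}_p\left(\mathbb{Z}\right)$ is a faithful degree-$p$ representation of $S$, so $\operatorname{rdim}\left(S\right) \le p$, and then $\operatorname{prdim}\left(S\right) \le \operatorname{rdim}\left(S\right) \le p$ by Lemma \ref{prdim is lower bound on rdim}. Now set $c = 2\lvert\operatorname{Aut}\left(S\right)\rvert$ and $b = p$ in Lemma \ref{numerical observation}. Taking $a = \operatorname{rdim}\left(S\right)$, hypothesis \eqref{og almost simple inequality} is precisely $2^a \ge ac$ while $b \ge a$, so the lemma gives $2^p \ge pc$; taking $a = \operatorname{prdim}\left(S\right)$, hypothesis \eqref{projective almost simple inequality} is $2^a \ge ac$ with $b \ge a$, giving the same conclusion; and taking $a = 29$ (so that $ac = 58\lvert\operatorname{Aut}\left(S\right)\rvert$), hypothesis \eqref{29 almost simple inequality} reads $2^{29} \ge ac$ with $b = p \ge 29 = a$, again giving $2^p \ge pc = 2\lvert\operatorname{Aut}\left(S\right)\rvert p$. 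In all three cases $\operatorname{symrank}\left(L, G\right) \le 2^p$.

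I do not expect a genuine obstacle here: the argument is bookkeeping layered on top of Lemmas \ref{symmetric rank facts}(a) and \ref{numerical observation}. The only points needing care are the order estimate $\lvert G\rvert \le 2\lvert\operatorname{Aut}\left(S\right)\rvert$ (the index-$2$ bound for $G^+$ in $G$ together with the almost-simple structure $G^+ \le \operatorname{Aut}\left(S\right)$) and the verification that $\operatorname{rdim}\left(S\right) \le p$ and $\operatorname{prdim}\left(S\right) \le p$, which is exactly what lets Lemma \ref{numerical observation} apply in the first two cases.
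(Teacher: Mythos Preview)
Your proof is correct and follows essentially the same approach as the paper: bound $\operatorname{symrank}(L,G)\le |G|\cdot p\le 2|\operatorname{Aut}(S)|p$ via Lemma~\ref{symmetric rank facts}(a), then invoke Lemma~\ref{numerical observation} with $b=p$, $c=2|\operatorname{Aut}(S)|$, and $a$ equal to $\operatorname{rdim}(S)$, $\operatorname{prdim}(S)$, or $29$ in the three respective cases. Your write-up is in fact more explicit than the paper's (which compresses the third case by citing Theorem~\ref{low dimensions theorem} and ``the first inequality''), and your use of $|G|\le 2|G^+|$ rather than equality is a harmless and slightly more careful choice.
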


\begin{proof}
The first inequality follows from (a) of Lemma \ref{symmetric rank facts}, Lemma \ref{numerical observation}, and the observations that $\operatorname{rdim}\left(S\right) \leq \operatorname{rdim}\left(G\right)$ and 
$$\left|G\right| = 2\left|G^+\right| \leq 2\left|\operatorname{Aut}\left(S\right)\right|.$$  
The second inequality follows from the first inequality and Lemmas \ref{numerical observation} and \ref{prdim is lower bound on rdim}.  The third inequality follows from the first inequality, Lemma \ref{numerical observation}, and Theorem \ref{low dimensions theorem}.
\end{proof}

We now prove Theorem \ref{almost simple groups theorem}.

\begin{proof} [Proof of Theorem \ref{almost simple groups theorem}]
We consider three cases:  when $S$ is the alternating group $A_n$ for some $n \geq 5$, when $S$ is a member of another infinite family of finite simple groups, and when $S$ is one of the sporadic finite simple groups (although the methodologies in the second and third cases are essentially the same).
\par
\textit{Case I:  }Suppose that $S \cong A_n$ for some $n \geq 5$.  By \cite[Thm. 2.5.7]{James}, all irreducible rational representations of $A_n$ are restrictions of irreducible rational representations of the full symmetric group $S_n$.  Thus, by (c) of Lemma \ref{symmetric rank facts}, it suffices to consider the case $G^+ \cong S_n$.  By Theorem 2.3.21 of \cite{James}, the degree of an irreducible representation of $S_n$ divides $n!$, and Theorem 3.4.10.ii of the same text states that the minimal degree of a faithful irreducible representation of $S_n$ is equal to $n-1$.  Furthermore, Theorem 3.4.10.iii of \cite{James} says that for $n \geq 6$, there does not exist a faithful irreducible representation of $S_n$ of degree $n$.  It now follows that the only possible prime degree of an irreducible representation of $S_n$ is $n-1$.  
\par
The Weyl group $W = W\left(\mathsf{A}_{n-1}\right) \cong S_n$, so all $G^+$-lattices (and, thus, all $G$-lattices) of rank $n-1$ are $W$-lattices of the same rank.  Therefore, since $\left|G\right| = 2\left|G^+\right|$, the desired bound now follows from Theorem \ref{root systems theorem}.
\par
\textit{Case II:  }Suppose $S$ is a member of one of the infinite families of finite simple groups aside from the alternating groups.  We first apply Lemma \ref{almost simple groups inequality} to each of the infinite families of finite simple groups to verify Theorem \ref{almost simple groups theorem} for all but finitely many cases; see Table \ref{almost simple groups table} for details.  In each case, the strategy is to verify one of the inequalities in Lemma \ref{almost simple groups inequality} for some value(s) of $m$ and/or $q$, and then prove that as $m$ and/or $q$ increase the appropriate inequality still holds.  For the remaining cases, we use \cite{ATLAS} and a computer algebra system to find the minimal possible prime $p$ such that a faithful representation of a group $G$ satisfying $S \trianglelefteq G \leq \operatorname{Aut}\left(S\right)$ exists.  By Lemma \ref{numerical observation}, it suffices to verify the inequality $2^p \geq 2\left|\operatorname{Aut}\left(S\right)\right|p$ in these cases.  This is shown in Table \ref{almost simple exceptions table}.  After using this approach, there are no more cases to consider.
\par
\textit{Case III:  }Assume that $S$ is one of the 26 sporadic finite simple groups.  We begin by applying Lemma \ref{almost simple groups inequality} to each group, getting group information from \cite{ATLAS}.  This strategy proves the theorem for all but the following six groups:  $\operatorname{M}_{23}$, $\operatorname{M}_{24}$, $\operatorname{Co}_3$, $\operatorname{Co}_2$, $\operatorname{HS}$, and $\operatorname{M}^{\operatorname{c}}\operatorname{L}$.  For none of these six groups does there exist a potential $S \trianglelefteq G \leq \operatorname{Aut}\left(S\right)$ such that $G$ admits an irreducible representation in prime degree $p \geq 29$, so we are done.
\end{proof}

\begin{table}[h]
\centering
\caption[Reducing the proof of Theorem \ref{almost simple groups theorem} to finitely many cases using bounds on the (projective) representation dimensions of finite simple groups.]{Information used in the proof of Theorem \ref{almost simple groups theorem} to check the inequalities in Lemma \ref{almost simple groups inequality} when $S$ is a member of one of the infinite families of finite simple groups besides the alternating groups.  We follow notation from \cite{ATLAS} for our groups in the first column.  In the second column, a ``$p$'' indicates that a lower bound on $\operatorname{prdim}\left(S\right)$ was used and an ``$r$'' indicates that a lower bound on $\operatorname{rdim}\left(S\right)$ was used; we also provide a citation for the bound.  In the third column, we list all values of $q$, $n$, or ordered pairs $\left(n,q\right)$ for which none of the inequalities in Lemma \ref{almost simple groups inequality} are satisfied.}
\label{almost simple groups table}
\begin{tabular}{c|c|c} Finite Simple Group $S$ & $\operatorname{rdim}\left(S\right)$ or $\operatorname{prdim}\left(S\right)$ Bound & Remaining Cases \\\hline $L_2\left(q\right)$, $q \equiv 1\left(\operatorname{mod }4\right)$ & $r, \frac{q+1}{2}$, \cite{Adams} & None \\\hline $L_2\left(q\right)$, $q \equiv 3\left(\operatorname{mod }4\right)$ & $r$, $\frac{q-1}{2}$, \cite{Adams} & None \\\hline $L_2\left(q\right)$, $q \geq 2$ even & $r$, $q - 1$, \cite{Adams} & None \\\hline $L_n\left(q\right)$, $n \geq 3$ & $p$, $\frac{q^n - 1}{q-1} - n$, \cite{Seitz} & $\left(5, 2\right)$ \\\hline $O_{2n+1}\left(3\right)$, $n \geq 1$ & $p$, $\frac{3^{2n} - 1}{8} - \frac{3^n - 1}{2}$, \cite{Seitz} & None \\\hline $O_{2n+1}\left(q\right)$, $n \geq 1$, $q \geq 5$ odd & $p$, $\frac{q^{2n}-1}{q^2-1} - n$, \cite{Seitz} & None \\\hline $S_4\left(q\right)$, $q \geq 2$ even & $p$, $\frac{q\left(q-1\right)^2}{2}$, \cite{Seitz} & None \\\hline $S_4\left(q\right)$, $q \geq 3$ odd & $p$, $\frac{1}{2}\left(q^2-1\right)$, \cite{Seitz} & $5$, $7$, $9$ \\\hline $S_{2n}\left(q\right)$, $n \geq 3$, $q \geq 2$ even & $p$, $\frac{q\left(q^n-1\right)\left(q^{n-1}-1\right)}{2\left(q+1\right)}$, \cite{Seitz} & $\left(4, 2\right)$ \\\hline $S_{2n}\left(q\right)$, $n \geq 3$, $q \geq 3$ odd & $p$, $\frac{1}{2}\left(q^n - 1\right)$, \cite{Seitz} & $\left(3, 3\right)$, $\left(4, 3\right)$ \\\hline $O_8^+\left(q\right)$, $q \notin \left\lbrace 2, 3, 5\right\rbrace$ & $p$, $\left(q^3-1\right)\left(q^2+1\right)$, \cite{Landazuri} & None \\\hline $O_8^+\left(q\right)$, $q \in \left\lbrace 2, 3, 5\right\rbrace$ & $p$, $q^2\left(q^3 - 1\right)$, \cite{Landazuri} & $2$ \\\hline $O_{2n}^+\left(q\right)$, $n > 4$ even, $q \notin \left\lbrace 2, 3, 5\right\rbrace$ & $p$, $\left(q^{n-1}-1\right)\left(q^{n-2}+1\right)$, \cite{Landazuri} & None \\\hline $O_{2n}^+\left(q\right)$, $n > 4$ even, $q \in \left\lbrace 2, 3, 5\right\rbrace$ & $p$, $q^{n-2}\left(q^{n-1}-1\right)$, \cite{Landazuri} & None \\\hline $O_{2n}^+\left(q\right)$, $n > 4$ odd, $q \notin \left\lbrace 2, 3, 5\right\rbrace$ & $p$, $\left(q^{n-1} - 1\right)\left(q^{n-2}+1\right)$, \cite{Landazuri} & None \\\hline $O_{2n}^+\left(q\right)$, $n > 4$ odd, $q \in \left\lbrace 2, 3, 5\right\rbrace$ & $p$, $q^{n-2}\left(q^{n-1}+1\right)$, \cite{Landazuri} & None \\\hline $E_6\left(q\right)$ & $p$, $q^9\left(q^2-1\right)$, \cite{Seitz} & None \\\hline $E_7\left(q\right)$ & $p$, $q^{15}\left(q^2-1\right)$, \cite{Seitz} & None \\\hline $E_8\left(q\right)$ & $p$, $q^{27}\left(q^2-1\right)$, \cite{Seitz} & None \\\hline $F_4\left(q\right)$, $q$ odd & $p$, $q^6\left(q^2-1\right)$, \cite{Seitz} & None \\\hline $F_4\left(q\right)$, $q$ even & $p$, $\frac{1}{2}q^7\left(q^3-1\right)\left(q-1\right)$, \cite{Seitz} & None \\\hline $G_2\left(q\right)$, $q > 2$, $3 \not | \ q$ & $p$, $q\left(q^2-1\right)$, \cite{Seitz} & None \\\hline $G_2\left(q\right)$, $3 \ | \ q$ & $p$, $q\left(q^2 - 1\right)$, \cite{Seitz} & None \\\hline $U_{n+1}\left(q\right)$, $n \geq 2$ even & $p$, $\frac{q\left(q^n-1\right)}{q+1}$, \cite{Seitz} & $\left(4, 2\right)$, $\left(6, 2\right)$ \\\hline $U_{n+1}\left(q\right)$, $n \geq 3$ odd & $p$, $\frac{q^{n+1}-1}{q+1}$, \cite{Seitz} & $\left(3, 3\right)$, $\left(5, 2\right)$ \\\hline $^2E_6\left(q\right)$ & $p$, $q^9\left(q^2-1\right)$, \cite{Seitz} & None \\\hline $^3D_4\left(q\right)$ & $p$, $q^3\left(q^2-1\right)$, \cite{Seitz} & $2$ \\\hline $Sz\left(2^{2n+1}\right)$ & $r$, $q^2$, \cite{Suzuki} & None \\\hline $^2F_4\left(2^{2n+1}\right)$, $n \geq 1$ & $p$, $\left(\frac{q}{2}\right)^{1/2}q^4\left(q-1\right)$, \cite{Seitz} & None \\\hline $^2G_2\left(3^{2n+1}\right)$, $n \geq 1$ & $r$, $q^2 - q + 1$, \cite{Ward} & None \\\hline
\end{tabular}
\end{table}


\begin{table}[h]
\centering
\caption[Minimal prime dimensions $\geq 29$ such that there exists a degree $p$ irreducible faithful representation of a group $G$ satisfying $S \trianglelefteq G \leq \operatorname{Aut}\left(S\right)$ for certain finite simple groups $S$.]{For the remaining cases in Table \ref{almost simple groups table}, we find the minimal prime $p \geq 29$ such that there exists a degree $p$ irreducible faithful representation of a group $G$ satisfying $S \trianglelefteq G \leq \operatorname{Aut}\left(S\right)$.  These were computed using character tables in \cite{GAP}.  In many cases, no such $p$ exists.
\\
*There are two irreducible representations of this degree, but they are not rational.  There are no other irreducible representations in prime degrees.}
\label{almost simple exceptions table}
\begin{tabular}{c|c|c|c}
$S$ & $\left|\operatorname{Aut}\left(S\right)\right|$ & $\operatorname{Out}\left(S\right)$ & Min. Prime Dim. $\geq 29$ \\\hline $L_5\left(2\right)$ & $19998720$ & $C_2$ & None \\\hline $S_4\left(5\right)$ & $9360000$ & $C_2$ & None \\\hline $S_4\left(7\right)$ & $276595200$ & $C_2$ & None \\\hline $S_4\left(9\right)$ & $6886425600$ & $C_2^2$ & $41$ \\\hline $S_8\left(2\right)$ & $47377612800$ & $C_1$ & None \\\hline $S_6\left(3\right)$ & $9170703360$ & $C_2$ & None \\\hline $S_8\left(3\right)$ & $131569513308979200$ & $C_2$ & $41$* \\\hline $O_8^+\left(2\right)$ & $1045094400$ & $S_3$ & None \\\hline $U_5\left(2\right)$ & $27371520$ & $C_2$ & None \\\hline $U_7\left(2\right)$ & $455574206545920
$ & $C_2$ & $43$* \\\hline $U_4\left(3\right)$ & $26127360$ & $D_4$ & None \\\hline $U_6\left(2\right)$ & $55180984320$ & $S_3$ & None \\\hline $^3D_4\left(2\right)$ & $634023936$ & $C_3$ & None
\end{tabular}
\end{table}

\clearpage
\bibliographystyle{alpha}
\bibliography{DissertationReferences}

\end{document}